   \def\MR#1{}
\newcommand{\GL}{\mathrm{GL}}
\newcommand{\SO}{\mathrm{SO}}
\newcommand{\algO}{\mathrm{O}}
\newcommand\PGL{\mathrm{PGL}}
\newcommand\hyp{irreducible symplectic }
\newcommand{\oo}{\mathcal{O}}
\newcommand\Z{\mathbb{Z}}
\newcommand\F{\mathbb{F}}
\newcommand\Q{\mathbb{Q}}
\newcommand\R{\mathbb{R}}
\newcommand\C{\mathbb{C}}
\newcommand\et{\textup{\'et}}
\DeclareMathOperator{\Aut}{Aut}
\newcommand\Isom{\textup{Isom}}
\newcommand\Gal{\mathrm{Gal}}
\newcommand\Pic{\mathop{\mathrm{Pic}}\nolimits}
\DeclareMathOperator{\disc}{disc}
\newcommand\Spec{\mathop{\mathrm{Spec}}}
\newcommand\Frac{\mathrm{Frac}}
\newcommand\Tw{\mathrm{Tw}}
\newcommand\spp{\mathrm{sp}}
\newcommand\nef{\mathrm{Nef}}
\newcommand\la{\mathcal{L}}
\newcommand\W{\mathrm{W}}
\newcommand\rk{\mathrm{rk}}
\newcommand\mv{\mathcal{MV}}
\newcommand\amp{\mathrm{Amp}}
\newcommand\pos{\mathcal{C}}
\DeclareMathOperator{\Bir}{Bir}
\newcommand\barX{X_{\overline{k}}}
\newcommand\sepX{X_{k_{s}}}
\newcommand\Pex{\mathcal{P}ex}
\newcommand\ba{\mathcal{BA}}
\DeclareMathOperator{\Ima}{Im}
\DeclareMathOperator{\chara}{char}
\DeclareMathOperator{\ord}{ord}
\newcommand\ch{\mathrm{ch}}
\newcommand\td{\mathrm{td}}
\theoremstyle{plain}
\newtheorem{theorem}[subsubsection]{Theorem}
\newtheorem{prop}[subsubsection]{Proposition}
\newtheorem{lemma}[subsubsection]{Lemma}
\newtheorem{corollary}[subsubsection]{Corollary}
\theoremstyle{definition}
\newtheorem{definition}[subsubsection]{Definition}
\newtheorem{remark}[subsubsection]{Remark}
\newtheorem*{lemma*}{Lemma}
\newtheorem*{prop*}{Proposition}
\newtheorem*{theorem*}{Theorem}
\newtheorem*{claim*}{Claim}
\newtheorem{definition*}{Definition}
\newtheorem*{ack}{Acknowledgements}
\begin{document}
\title{On the finiteness of twists of irreducible symplectic varieties}
\author[T.\ Takamatsu]{Teppei Takamatsu}


\address{Department of Mathematics (Hakubi Center), Graduate School of Science, Kyoto University, Kyoto 606-8502, Japan}
\email{teppeitakamatsu.math@gmail.com}

\maketitle

\begin{abstract}
Irreducible symplectic varieties are higher-dimensional analogues of K3 surfaces.
In this paper, we prove the finiteness of twists of \hyp varieties for a fixed finite field extension of characteristic $0$. 
The main ingredient of the proof is the cone conjecture for \hyp varieties, which was proved by Markman and Amerik--Verbitsky. As byproducts, we also discuss the cone conjecture over non-closed fields by Bright--Logan--van Luijk's method.
We also give an application to the finiteness of derived equivalent twists.
Moreover, we discuss the case of K3 surfaces or Enriques surfaces over fields of positive characteristic. 
\end{abstract}

\setcounter{section}{0}

\section{Introduction}
Classifying algebraic varieties is one of the main purposes of algebraic geometry.
Over an algebraically closed field, there are many such classification theorems.
However, over a general base field, such classification problems become difficult since there might be different twisted forms for a given variety.
In this paper, we study the finiteness of twists of varieties, especially for \hyp varieties.
Let $k$ be a field, and $k'$ a finite field extension of $k$.  
Let $X$ be a variety over $k$.
We denote the set of $k$-isomorphism classes of varieties over $k$ which are isomorphic to $X_{k'} \coloneqq X \times_{k} k'$ after the base change to $k'$ by $\Tw_{k'/k}(X)$.
In general, the set $\Tw_{k'/k}(X)$ is not necessarily finite even when $X$ is smooth and projective over $k$.
For example, if $X$ is a projective space $\mathbb{P}^{n}_{k}$ over a number field $k$ and $k'/k$ is a non-trivial extension, then one can prove that $\Tw_{k'/k} (X)$ is infinite by the Morita equivalence and the Chebotarev's density theorem. 
On the other hand, if $k'/k$ is a Galois extension and $X_{k'}$ has an automorphism group of finite order, then the Galois cohomological argument asserts that $\Tw_{k'/k}(X)$ is finite. In particular, $\Tw_{k'/k} (X)$ is finite if $X$ is general type varieties of characteristic $0$.
Therefore, this finiteness reflects certain finiteness of the automorphism group of a variety, and it seems to be most interesting when the Kodaira dimension of $X$ is $0$.
The main theorem of this paper is the following.
\begin{theorem}[Theorem \ref{fintwistK3}, Theorem \ref{hypkfintwist}]
\label{intromain}
Let $k'/k$ be a finite extension of fields, $X$ a variety over $k$.
Then $\Tw_{k'/k} (X)$ is a finite set in the following cases.
\begin{enumerate}
\item
$\chara k \neq 2$, and $X$ is a K3 surface over $k$.
\item
$\chara k = 2$, and $X$ is a non-supersingular K3 surface over $k$
\item
$\chara k = 0$, and $X$ is an \hyp variety over $k$.
\end{enumerate} 
\end{theorem}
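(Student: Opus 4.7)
The plan is to work with non-abelian Galois cohomology and reduce the finiteness statement to a lattice-theoretic consequence of the cone conjecture. First, after replacing $k'$ by a Galois closure of $k'$ over $k$ (which only enlarges $\Tw_{k'/k}(X)$), I may assume $k'/k$ is Galois with group $G := \Gal(k'/k)$. Standard descent theory then identifies $\Tw_{k'/k}(X)$ with the pointed set $H^1(G, \Aut(X_{k'}))$, so it suffices to prove finiteness of this cohomology set.

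Next I would analyze the representation $\rho\colon \Aut(X_{k'}) \to \algO(\Pic(X_{k'}))$. The kernel $A_0 := \ker \rho$ is finite in all three cases: for K3 surfaces by the classical fact that automorphisms acting trivially on the Picard lattice have finite order (and are controlled by the period), and for irreducible symplectic varieties in characteristic $0$ by Huybrechts--Markman's theory based on the Beauville--Bogomolov form. Hence $H^1(G, A_0)$ is finite, and the exact sequence
\[
1 \to A_0 \to \Aut(X_{k'}) \to \Ima(\rho) \to 1
\]
together with the usual twisting argument in non-abelian cohomology reduces the problem to showing that $H^1(G, \Ima(\rho))$ is finite.

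For this final step the cone conjecture enters: by Sterk (for complex K3), by the positive-characteristic versions available away from the supersingular range in characteristic $2$, and by Markman--Amerik--Verbitsky (for \hyp varieties in characteristic $0$), the image $\Ima(\rho)$ has finite index in the full stabilizer $\Gamma := \{g \in \algO(\Pic(X_{k'})) : g(\nef(X_{k'})) = \nef(X_{k'})\}$, and $\Gamma$ acts on the rational nef cone with a rational polyhedral fundamental domain $D$. Following the Bright--Logan--van Luijk strategy, a cocycle $c \in Z^1(G, \Gamma)$ induces a new $G$-action on $\Pic(X_{k'})$ that still preserves the nef cone, and up to replacing $c$ by a cohomologous cocycle (i.e.\ acting by $\Gamma$) one may arrange a $G$-fixed rational ample class to lie in $D$; since $D$ is rational polyhedral, only finitely many $G$-actions can arise, so $H^1(G, \Gamma)$ is finite. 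Combined with the finiteness of $[\Gamma : \Ima(\rho)]$ (via a standard twisting argument), this yields finiteness of $H^1(G, \Ima(\rho))$.

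The main obstacle, and where care is needed, is to package the cone conjecture in a form that interacts well with Galois descent over the possibly non-closed field $k'$: the usual statements are geometric over algebraically closed fields, and one needs a version sensitive to the $G$-module structure on $\Pic(X_{k'})$. In characteristic $0$ this is exactly the contribution of the Bright--Logan--van Luijk method generalized to \hyp varieties via Markman--Amerik--Verbitsky. In positive characteristic, the restriction to non-supersingular K3 surfaces in characteristic $2$ reflects the limited range in which both the cone conjecture and the relevant Torelli-type input for the finiteness of $\ker \rho$ are currently available.
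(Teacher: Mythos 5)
Your strategy is genuinely different from the paper's. You compute $H^{1}(G,\Aut(X_{k'}))$ directly, by d\'evissage through $1\to\ker\rho\to\Aut(X_{k'})\to\Ima(\rho)\to 1$ and then by controlling $H^{1}$ of the image inside the stabilizer $\Gamma$ of the nef cone. This is essentially the Cattaneo--Fu route (used there for real forms), which the introduction of the paper explicitly says it is designed to avoid (``to avoid the difficulty of the group structure and the Galois cohomological arguments''). The paper instead bounds a polarization degree uniformly over all twists (Lemma \ref{weylK3}, Lemma \ref{lemmamainrefined2}), uses the cone conjecture only to show that polarizations of that degree form finitely many $\Aut(X_{k'})$-orbits (Lemma \ref{finquasipol}, Lemma \ref{hypfinquasipol}), and then embeds $\Tw_{k'/k}(X)$ into the set of twists of finitely many \emph{polarized} varieties, each of which has finite automorphism group. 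That reduction buys precisely what your approach must supply by hand: it never needs to analyze the structure of $\Aut(X_{k'})$ or of $\Gamma$ as abstract groups.

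The genuine gap is your final step: ``since $D$ is rational polyhedral, only finitely many $G$-actions can arise, so $H^{1}(G,\Gamma)$ is finite'' is not a proof. After normalizing the cocycle so that the twisted action fixes a rational ample class $x\in D$, you only know that each $c(g)\sigma(g)$ lies in the stabilizer of $x$ in $\algO(\Pic(X_{k'}))$; that stabilizer is finite for each fixed $x$, but $D$ contains infinitely many rational ample classes and the stabilizers vary with $x$, so no finiteness follows without further work. What is actually needed is that a group acting on $\nef^{+}$ with a rational polyhedral fundamental domain has only finitely many conjugacy classes of finite subgroups (equivalently, finitely many $\Gamma$-conjugacy classes of splittings of $\Gamma\rtimes G\to G$); this is a real theorem and is the technical heart of the Cattaneo--Fu approach. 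Note that Borel's finiteness for arithmetic groups does not apply here, since $\Gamma$ typically has infinite index in $\algO(\Pic(X_{k'}))$ whenever the Weyl group is infinite. Two further points. First, in positive characteristic a finite extension need not admit a Galois closure; the paper reduces to the separable case using that the isomorphism scheme of K3 surfaces is unramified, and your reduction needs this step as well. Second, for \hyp varieties with $b_{2}(\barX)\le 4$ the Amerik--Verbitsky boundedness of MBM squares --- and hence the finite-index and fundamental-domain input you invoke --- is not available; the paper treats geometric Picard rank $2$ by a separate argument via almost abelian groups (Lemma \ref{lemmapic2} and Proposition \ref{propgalcoh}), and your sketch does not cover this case.
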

Moreover, in the above cases, we also prove that $\# \Tw_{k'/k} (X)$ is bounded by a constant which depends only on $[k'\colon k]$, the isometry class of geometric N\'{e}ron--Severi lattice, and the deformation class of  $X$ (see Theorem \ref{effective} for precise statements).

Theorem \ref{intromain} is a generalization of Cattaneo--Fu's work (\cite{Cattaneo2019}) on the finiteness of real forms of \hyp varieties. 
By the example of projective spaces, we can say that the finiteness of $\Tw_{k'/k}$ is more interesting when $k$ is a global field rather than a local field.
Indeed, when $k$ is a global field, as in \cite[Corollary 6.2.1]{Takamatsu2020a}, such finiteness is one of the evidence of the finiteness of varieties with cohomologies of bounded ramification (cohomological formulation of the Shafarevich conjecture).

We also give such finiteness for Enriques surfaces of arbitrary characteristic, by using Liedtke's lifting result for Enriques surfaces \cite{Liedtke2015}.
This finiteness can be seen as one of the evidence of several finiteness properties of the automorphism group of Enriques surfaces of characteristic $2$.
\begin{theorem}[Theorem \ref{fintwistEnr}]
\label{introenr}
Let $k'/k$ be a finite extension of perfect fields of characteristic $2$, $X$ an Enriques surface over $k$.
Then $\Tw_{k'/k} (X)$ is a finite set.
\end{theorem}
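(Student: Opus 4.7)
The plan is to reduce the statement to the characteristic-$0$ case of the finiteness of twists of Enriques surfaces, which itself follows from Theorem~\ref{intromain}(1) applied to the K3 double cover. The key ingredient for this reduction is Liedtke's lifting theorem~\cite{Liedtke2015} for Enriques surfaces in characteristic $2$. As a preliminary step I would enlarge $k'$ to the Galois closure of $k'/k$: the natural map $\Tw_{k'/k}(X) \to \Tw_{k''/k}(X)$ for the Galois closure $k''$ has controllable fibers, so it suffices to treat the Galois case, in which $\Tw_{k'/k}(X)$ embeds into $H^1(\Gal(k'/k), \Aut(X_{k'}))$.

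Set $R = W(k)$ and $R' = W(k')$, with fraction fields $K$ and $K'$; since $k'/k$ is separable (both being perfect) and Galois, $K'/K$ is unramified Galois with $\Gal(K'/K) \cong \Gal(k'/k)$. By Liedtke's theorem I choose a formal (in fact algebraizable) lift $\mathcal{X}$ of $X$ to an Enriques scheme over $R$; its generic fiber $\mathcal{X}_K$ is an Enriques surface in characteristic $0$. Likewise each $X' \in \Tw_{k'/k}(X)$ admits a lift $\mathcal{X}'/R$. Fixing a compatible ample line bundle throughout, I would work inside a Deligne--Mumford moduli stack of polarized Enriques surfaces, which is separated.

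The central construction is a map
\[
\Phi\colon \Tw_{k'/k}(X) \to \Tw_{K'/K}(\mathcal{X}_K)
\]
defined as follows. Given $X'$ with an isomorphism $\phi\colon X_{k'} \xrightarrow{\sim} X'_{k'}$, both $\mathcal{X}_{R'}$ and $\mathcal{X}'_{R'}$ are polarized Enriques liftings over $R'$ of the same special fiber (via $\phi$); the valuative criterion of separatedness then identifies their generic fibers $\mathcal{X}_{K'} \cong \mathcal{X}'_{K'}$, compatibly with the $\Gal(K'/K)$-action. Hence $\mathcal{X}'_K$ inherits the structure of a $K'/K$-twist of $\mathcal{X}_K$, producing $\Phi(X') \in \Tw_{K'/K}(\mathcal{X}_K)$. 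The target is finite by the characteristic-$0$ Enriques case.

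To conclude, I would show $\Phi$ has finite fibers: given a twist $\mathcal{X}'_K$, its integral model over $R$ is essentially unique (again by separatedness), so its special fiber $X'$ is determined up to finite ambiguity coming from twisting the polarization and from the finite automorphism groups in play. The main obstacle I anticipate is making the separatedness-type arguments precise in the arithmetic setting for all three types of characteristic-$2$ Enriques surfaces (classical, singular, and supersingular); the merit of the lifting approach is that it treats these uniformly once one is in characteristic $0$, sidestepping the pathology that the K3-like cover of a supersingular Enriques surface is not a K3 surface and therefore does not admit a direct appeal to Theorem~\ref{intromain}.
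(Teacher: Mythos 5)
Your overall instinct---use Liedtke's lifting theorem to import characteristic-$0$ finiteness---matches the paper's, but the way you deploy the lift contains a fatal gap. The map $\Phi\colon \Tw_{k'/k}(X)\to\Tw_{K'/K}(\mathcal{X}_K)$ is not well-defined: the valuative criterion of separatedness for a moduli stack of polarized Enriques surfaces says only that an isomorphism between the \emph{generic} fibers of two $R'$-models specializing to a given isomorphism of special fibers is unique \emph{if it exists}; it does not produce one. Liftings of a fixed (polarized) Enriques surface to characteristic $0$ are highly non-unique --- the deformation space is positive-dimensional --- so the isomorphism $\phi\colon X_{k'}\xrightarrow{\sim}X'_{k'}$ will in general not lift, and $\mathcal{X}_{K'}$ and $\mathcal{X}'_{K'}$ need not be isomorphic at all. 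There is also a smaller but real error in the setup: you cannot take $R=W(k)$. Liedtke's theorem (Theorem \ref{liftEnr}) first requires replacing $k$ by a finite separable extension to acquire a Cossec--Verra polarization, and for classical and supersingular Enriques surfaces in characteristic $2$ the lift of $\Pic^{\tau}$ forces a \emph{ramified} base (the paper takes $R$ with $\sqrt{2}$ a uniformizer, reflecting the Oort--Tate condition $a'b'=2$); only singular Enriques surfaces lift over the Witt vectors.

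The paper avoids transporting the entire twisting problem to characteristic $0$. It uses the lift for one purpose only: to prove that the set of polarizations of bounded degree on $X_{k''}$ modulo $\Aut(X_{k''})$ is finite (Lemma \ref{finpolEnr}), by lifting line bundles (after squaring), applying Kawamata's cone conjecture for Enriques surfaces in characteristic $0$, and then specializing both the representative line bundles and the automorphisms back to the special fiber via Matsusaka--Mumford. Separately, it produces a polarization of uniformly bounded degree on every twist $Y$ directly in characteristic $2$, using the Galois-invariant part of the Weyl group generated by $(-2)$-curve reflections (the analogue of Lemma \ref{weylK3}). The finiteness then follows from the finiteness of the automorphism group of a polarized Enriques surface. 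If you want to salvage your approach you would need a canonical or at least finitely-ambiguous lifting construction compatible with isomorphisms of special fibers, which is essentially what the specialization arguments in Lemma \ref{finpolEnr} substitute for.
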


Moreover, as an application, we give the following finiteness of derived equivalent twists. We recall that varieties are derived equivalent if there is a $k$-linear equivalence of triangulated categories between the bounded derived categories of coherent sheaves. 
We denote the set of $k$-isomorphism classes of varieties over $k$ which are derived equivalent to $X$ and isomorphic to $\barX$ after the base change to an algebraic closure $\overline{k}$ by $\Tw^{D} (X)$, which is a subset of $\Tw_{\overline{k}/k} (X)$. 
Note that, $\Tw_{\overline{k}/k} (X)$ is not finite in most cases.
\begin{corollary}[Theorem \ref{findertwist}]
\label{introcor}
In the case (1) or (2) in Theorem \ref{intromain}, the set $\Tw^{D}(X)$ is a finite set.
\end{corollary}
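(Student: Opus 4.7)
The plan is to exhibit a finite extension $k'/k$ depending only on $X$ such that every $Y \in \Tw^{D}(X)$ satisfies $Y_{k'} \cong X_{k'}$; once this is achieved, $\Tw^{D}(X) \subseteq \Tw_{k'/k}(X)$, and Theorem~\ref{intromain} gives the desired finiteness.

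First I would invoke the geometric finiteness of Fourier--Mukai partners of $X_{\overline{k}}$: by Bridgeland--Maciocia in characteristic $0$ and by Lieblich--Olsson in positive characteristic (applicable to non-supersingular K3 surfaces when $\chara k = 2$), there are only finitely many $\overline{k}$-isomorphism classes $Z_{1},\dots,Z_{n}$ of K3 surfaces derived equivalent to $X_{\overline{k}}$. By Mukai's description, each $Z_{i}$ may be presented as a moduli space $M_{X_{\overline{k}}}(v_{i})$ of stable sheaves for a Mukai vector $v_{i}$ on $X_{\overline{k}}$. I would then choose $k_{0}/k$ finite so that (a) each $Z_{i}$ is defined over $k_{0}$, (b) the identifications $Z_{i} \cong M_{X_{\overline{k}}}(v_{i})$ descend to $k_{0}$, and (c) the Brauer obstructions to the existence of universal families on the relevant $M_{X_{\overline{k}}}(v_{i})$ are killed by base change to $k_{0}$.

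Next, for $Y \in \Tw^{D}(X)$, Orlov's theorem produces a $k$-rational Fourier--Mukai kernel $\mathcal{E} \in D^{b}(Y \times_{k} X)$ whose Mukai vector $v(\mathcal{E}_{y})$ is Galois-invariant, and Mukai's identification yields $Y_{\overline{k}} \cong M_{X_{\overline{k}}}(v(\mathcal{E}_{y}))$. Under the hypothesis $Y_{\overline{k}} \cong X_{\overline{k}}$, the vector $v(\mathcal{E}_{y})$ must lie in the finite set of Mukai vectors for which the associated moduli space is isomorphic to $X_{\overline{k}}$, and the descent data (a)--(c) promote the $\overline{k}$-isomorphism $Y_{\overline{k}} \cong X_{\overline{k}}$ to a $k_{0}$-isomorphism $Y_{k_{0}} \cong X_{k_{0}}$. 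Setting $k' = k_{0}$ completes the reduction to Theorem~\ref{intromain}.

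The main obstacle I anticipate is the uniformity required in (c) above: one must verify that a single finite extension $k_{0}$ suffices to trivialize the Brauer obstructions for all $Y \in \Tw^{D}(X)$ simultaneously, which relies on the finiteness of the relevant Mukai vectors and a careful control of the moduli-theoretic descent. A secondary issue is ensuring that the Bridgeland--Maciocia/Mukai formalism is available in positive characteristic, which is precisely the reason to exclude supersingular K3 surfaces in the $\chara k = 2$ case.
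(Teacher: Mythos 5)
There is a genuine gap, and it sits exactly where you flag ``the main obstacle'': producing a \emph{single} finite extension $k_0/k$ over which every $Y \in \Tw^{D}(X)$ becomes isomorphic to $X$. Your proposed mechanism does not deliver this. First, the claim that $v(\mathcal{E}_y)$ lies in ``the finite set of Mukai vectors for which the associated moduli space is isomorphic to $X_{\overline{k}}$'' is false: that set is infinite (already tensoring the ideal sheaf of a point by line bundles, or applying shifts and spherical twists, produces infinitely many Mukai vectors $v$ with $M_{X_{\overline{k}}}(v)\cong X_{\overline{k}}$); only the set of FM partners up to $\overline{k}$-isomorphism is finite, and different $v$'s in one ``orbit'' are related by autoequivalences that need not be defined over $k$. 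Second, and more fundamentally, even with a presentation $Y\cong M_X(v)$ over $k$, the $Y$'s range over $k$-forms of $X_{\overline{k}}$ classified by cocycles valued in the (generally infinite, discrete) group $\Aut(X_{\overline{k}})$; nothing in your steps (a)--(c) bounds the field of definition of an isomorphism $Y_{\overline{k}}\cong X_{\overline{k}}$ uniformly in $Y$, and killing Brauer obstructions for finitely many fixed moduli problems does not address this. So the reduction $\Tw^{D}(X)\subseteq \Tw_{k'/k}(X)$ is not established.

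The paper closes this gap by a different and essentially unavoidable device: the cohomological Fourier--Mukai transform $\Phi^{\et}_{P,\ell}$ gives, for a suitable prime $\ell$, a \emph{Galois-equivariant} isomorphism of the full integral $\ell$-adic cohomologies of any two members of $\Tw^{D}(X)$. Hence the Galois representations on $H^{*}_{\et}(Y_{\overline{k}},\Z_\ell)$ are all isomorphic to that of $X$, and one finite Galois extension $k'/k$ simultaneously equips every $Y_{k'}$ with a level $\ell$-structure on the full cohomology. Combining this with a uniform bound on the degree of a polarization (Lemma \ref{weylK3}), the finiteness of polarizations of fixed degree modulo automorphisms (Lemma \ref{finquasipol}), and the rigidity of polarized K3 surfaces with level structure (trivial automorphism group, so each set $T_{i,j}$ is a singleton), the map $Y\mapsto (Y_{k'},M_{Y,k'},\alpha_{Y_{k'}})$ has finite image and its fibers are controlled by Theorem \ref{fintwistK3}. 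If you want to salvage your outline, the missing ingredient is precisely this uniform transport of a level structure (or some equivalent uniform control of the Galois action) across all of $\Tw^{D}(X)$; the moduli-of-sheaves presentation alone does not provide it.
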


We also prove the same finiteness for all known \hyp varieties, i.e.\ $K3^{[n]}$-type, generalized Kummer type, $OG_{6}$-type, and $OG_{10}$-type (see Theorem \ref{findertwist} and Corollary \ref{corfindertwist} for precise statements).
Combining Corollary \ref{introcor} with \cite[Corollary 1.2]{Bridgeland2001} and \cite[Theorem 1.1]{Lieblich2015}, we have the following finiteness over non-closed fields.

\begin{corollary}
Let $X$ be a K3 surface over a field $k$ of characteristic $\neq 2$.
Then there exist only finitely many $k$-isomorphism classes of smooth projective varieties $Y$ over $k$ which are derived equivalent to $X$.
\end{corollary}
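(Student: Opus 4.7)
The plan is to sort the $Y$'s in question by their geometric isomorphism class and reduce each piece to Corollary~\ref{introcor}. First, I will invoke \cite[Corollary 1.2]{Bridgeland2001} to conclude that any smooth projective $Y/k$ derived equivalent to $X$ is itself a K3 surface (geometrically over $\overline{k}$, hence over $k$, since $Y_{\overline{k}}$ being a K3 forces $Y$ to be one). Next, I will invoke \cite[Theorem 1.1]{Lieblich2015} to the effect that the collection of $\overline{k}$-isomorphism classes of K3 surfaces over $\overline{k}$ that are derived equivalent to $X_{\overline{k}}$ is finite; fix representatives $W_1,\dots,W_n$.

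Base change of Fourier--Mukai kernels ensures that for any $Y/k$ derived equivalent to $X$, the variety $Y_{\overline{k}}$ is a Fourier--Mukai partner of $X_{\overline{k}}$, so $Y_{\overline{k}} \simeq W_i$ over $\overline{k}$ for some $i$. Letting $S_i$ denote the set of $k$-isomorphism classes of such $Y$, it suffices to show each $S_i$ is finite. For each nonempty $S_i$, I will pick any witness $Y_0 \in S_i$; then $Y_0$ is a K3 surface over $k$, and every $Y \in S_i$ satisfies $Y_{\overline{k}} \simeq Y_{0,\overline{k}}$ together with $D^{b}(Y) \simeq D^{b}(X) \simeq D^{b}(Y_0)$ by symmetry and transitivity of derived equivalence. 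Hence $S_i \subseteq \Tw^{D}(Y_0)$, which is finite by Corollary~\ref{introcor} applied to the K3 surface $Y_0$. Summing over the finitely many indices $i$ finishes the argument.

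The main point worth flagging is that one never needs to descend the individual partners $W_i$ to $k$ in advance: it suffices, as above, to pick a witness $Y_0$ once $S_i$ is known to be nonempty, which neatly sidesteps any Galois-cohomological descent obstruction that would otherwise arise from trying to $k$-rationalize a $\overline{k}$-finiteness statement. With that observation in hand the argument becomes formal, and the genuine heavy lifting --- the cone-conjecture input powering Corollary~\ref{introcor} --- has already been carried out upstream in the paper.
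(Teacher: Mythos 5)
Your proposal is correct and follows essentially the same route the paper indicates: Bridgeland--Maciocia to see that every derived partner is a K3 surface, Lieblich--Olsson for finiteness of geometric Fourier--Mukai partners, and then a partition by geometric isomorphism class with each nonempty piece absorbed into $\Tw^{D}(Y_0)$ for a chosen witness $Y_0$, which is finite by the paper's derived-twist theorem. The witness trick you flag is exactly how the paper's one-line ``combining'' argument is meant to be fleshed out, and your attention to base change of kernels and to descending the K3 property from $\overline{k}$ to $k$ is appropriate.
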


Now we give some comments on the proof of Theorem \ref{intromain} for \hyp varieties.
In Cattaneo--Fu's work (\cite{Cattaneo2019}), first, they study the Klein automorphism group of \hyp varieties by using the cone conjecture for \hyp varieties, and then they see the Galois cohomology directly.
In our situation, there is no direct analogue of the Klein automorphism group.
Thus to avoid the difficulty of the group structure and the Galois cohomological arguments, we take a different approach, though we use the cone conjecture for \hyp varieties too.
The main idea is reducing the problem to the finiteness of twists of quasi-polarized \hyp variety.
In this reduction, we need two steps. First, we should bound the polarization degree of \hyp variety. Second, we need the finiteness of the set of polarizations of bounded degree modulo automorphisms for \hyp varieties.
Both steps are related to the cone conjecture for \hyp varieties, proved by Markman \cite{Markman2011} and Amerik and Verbitsky \cite{Amerik2017}, \cite{Amerik2016} when the base field is algebraically closed (see also Markman and Yoshioka's work \cite{Markman2015}).
As byproducts, we also prove the following cone conjecture for \hyp varieties over non-closed fields of characteristic $0$.
\begin{theorem}[Theorem \ref{bircone}, Theorem \ref{automcone}]
\label{introcone}
Let $k$ be a field of characteristic $0$.
Let $X$ be an \hyp variety over $k$.
\begin{enumerate}
\item
The action of $\Bir (X)$ on $\mv_{X}^{+}$ admits a rational polyhedral fundamental domain. 
\item
The action of $\Aut (X)$ on $\nef_{X}^{+}$ admits a rational polyhedral fundamental domain.
\end{enumerate}
\end{theorem}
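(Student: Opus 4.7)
The plan is to descend the cone conjecture from $\bar{k}$ to $k$, in the spirit of Bright--Logan--van Luijk's treatment of K3 surfaces over non-closed fields. Set $\Gamma = \Gal(\bar{k}/k)$ and treat (1) and (2) uniformly: write $G = \Bir(X_{\bar{k}})$ (resp.\ $\Aut(X_{\bar{k}})$) acting on the cone $\mathcal{C} = \mv_{X_{\bar{k}}}^{+}$ (resp.\ $\nef_{X_{\bar{k}}}^{+}$) inside $V = \mathrm{NS}(X_{\bar{k}})_{\R}$. Standard compatibilities give $G^{\Gamma} = \Bir(X)$ (resp.\ $\Aut(X)$), $\mathcal{C}^{\Gamma} = \mv_{X}^{+}$ (resp.\ $\nef_{X}^{+}$), and $V^{\Gamma} = \mathrm{NS}(X)_{\R}$. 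The cone conjecture of Markman and Amerik--Verbitsky supplies a rational polyhedral fundamental domain $\Pi \subset \mathcal{C}$ for the $G$-action.

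The key preliminary observation is that the image $\bar{\Gamma}$ of $\Gamma$ in the integral orthogonal group of $\mathrm{NS}(X_{\bar{k}})$ with respect to the Beauville--Bogomolov--Fujiki form is finite. Since $X$ is projective over $k$, there is a $\Gamma$-invariant ample class $h$; since the BBF form has Lorentzian signature $(1,\rho-1)$ on $\mathrm{NS}(X_{\bar{k}})_{\R}$ and is negative definite on $h^{\perp}$, the stabilizer of $h$ in the real orthogonal group is compact, and its intersection with the discrete integral isometry group is finite. The group $\bar{\Gamma}$ normalizes $G$ (since $\Gamma$ acts on $G$ by conjugation) and preserves both the cone $\mathcal{C}$ and the BBF form; in the $\Bir$-case the latter uses Markman's Hodge-isometry theorem for birational automorphisms.

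With $\bar{\Gamma}$ finite, $G \rtimes \bar{\Gamma}$ is a finite-index overgroup of $G$ acting on $\mathcal{C}$. A standard argument --- finite-index supergroups inherit rational polyhedral fundamental domains by tiling $\Pi$ into finitely many rational polyhedral sub-cells indexed by $\bar{\Gamma}$-orbits on the tessellation $\{g\Pi : g \in G\}$ and selecting one representative per orbit --- produces a rational polyhedral fundamental domain $\Pi' \subset \Pi$ for $G \rtimes \bar{\Gamma}$ on $\mathcal{C}$. Intersecting with the Galois-invariant subspace gives the rational polyhedral set $\Pi' \cap V^{\Gamma} \subset \mathcal{C}^{\Gamma}$, which I claim is a fundamental domain for $G^{\Gamma}$ on $\mathcal{C}^{\Gamma}$: for any $\Gamma$-fixed $\alpha \in \mathcal{C}^{\Gamma}$, an element $(g,\sigma) \in G \rtimes \bar{\Gamma}$ moving $\alpha$ into $\Pi'$ satisfies $(g,\sigma)\cdot\alpha = g\alpha$ by $\Gamma$-invariance of $\alpha$, and the semidirect-product structure combined with generic triviality of stabilizers on interior points forces $g \in G^{\Gamma}$.

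The main obstacle is the middle step, producing $\Pi'$ as a rational polyhedral set. Although this is purely convex-geometric once $\bar{\Gamma}$ is known to be finite, one must carefully track cell boundaries to preserve simultaneously rational polyhedrality and interior-disjointness under the enlarged group; this is exactly the point where the Looijenga-style cell-decomposition arguments used by Bright--Logan--van Luijk are invoked. Once $\Pi'$ is in hand the verification is routine, and statements (1) and (2) both follow by specializing the common construction to $(\Bir(X_{\bar{k}}), \mv_{X_{\bar{k}}}^{+})$ and $(\Aut(X_{\bar{k}}), \nef_{X_{\bar{k}}}^{+})$ respectively.
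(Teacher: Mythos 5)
Your reduction to the finiteness of $\overline{\Gamma} = \mathrm{Im}(\Gal(\overline{k}/k) \to \algO(\Lambda_{\barX}))$ is correct, but the final descent step does not work, and it is exactly the step that carries the content of the theorem. If $\Pi'$ is a fundamental domain for $G \rtimes \overline{\Gamma}$ and $\alpha \in \mathcal{C}^{\Gamma}$, the element $(g,\sigma)$ with $g\alpha \in \Pi'$ need not have $g \in G^{\Gamma}$: for $\tau \in \Gamma$ one computes $\leftidx{^{\tau}}{g}{}(\alpha) = \tau(g\alpha) = \overline{\tau}(g\alpha)$, which lies in the translate $\overline{\tau}(\Pi')$ rather than in $\Pi'$ (these meet only along boundaries precisely because $\overline{\tau} \in G\rtimes\overline{\Gamma}$), so uniqueness of the orbit representative gives no relation between $\leftidx{^{\tau}}{g}{}$ and $g$. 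Worse, the unique point $g\alpha$ of the orbit $H\alpha = G\alpha$ lying in $\Pi'$ is then in general \emph{not} Galois-fixed, so $G^{\Gamma}\alpha \cap (\Pi' \cap V^{\Gamma}) \subseteq G\alpha \cap \Pi' \cap V^{\Gamma}$ can be empty and the covering property fails. The descent argument of Bright--Logan--van Luijk that you are trying to imitate is driven not by finiteness of $\overline{\Gamma}$ but by \emph{Galois-stability of the fundamental domain}, which holds for the canonical Weyl-group domain $\overline{\mv}_{\barX}\cap\pos_{\barX}$ (this is Proposition \ref{fundweylrational}) but is incompatible with being a fundamental domain for a group containing $\overline{\Gamma}$. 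The paper avoids this entirely: it proves that $\Bir(X)\ltimes R_{X} \to \algO(\Lambda_{X})$ has finite kernel and finite-index image in the orthogonal group of the $k$-rational lattice (Proposition \ref{indexbir}, via \cite[Proposition 2.2, Lemma 3.12]{Bright2019}), runs the Looijenga--Sterk construction $\Pi = \{x \in \pos_{X}^{+} \mid (\gamma x, y)\geq (x,y)\}$ directly on $\Lambda_{X,\R}$ with a $k$-rational ample $y$, and shows $\Pi \subset \overline{\mv}_{X}$ using a Coxeter-theoretic analysis of the Galois-invariant reflections (Proposition \ref{lembircone}).

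A second, independent gap: part (2) is not obtained by "specializing the common construction" to $(\Aut(X_{\overline{k}}), \nef^{+})$. Passing from the movable cone to the nef cone over a non-closed field requires decomposing the birational fundamental domain by the finitely many MBM walls meeting it (which uses the Amerik--Verbitsky boundedness theorem and forces a separate treatment of $b_{2}(\barX)\leq 4$), identifying the resulting chambers with nef cones of birational models, and --- crucially --- a Galois descent argument showing that each geometric birational model whose ample chamber meets $\Lambda_{X,\R}$ descends to $k$ (Proposition \ref{biramp}). None of this is addressed by your uniform framework, and it cannot be, since $\Aut(X_{\overline{k}})^{\Gamma}$-orbits on chambers see the $k$-forms of birational models, which is genuinely arithmetic information.
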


To prove Theorem \ref{introcone}, we follow the method given by Bright--Logan--van Luijk \cite{Bright2019}, where they proved the cone conjecture for K3 surfaces over non-closed fields of characteristic different from $2$.

The outline of this paper is as follows.
In Section \ref{Preliminaries}, we recall the definition of \hyp varieties, several cones of surfaces and \hyp varieties, and almost abelian groups. Moreover, we prove the finiteness of the Galois cohomology of an almost abelian group, which will be used for \hyp varieties of the geometric Picard rank $2$.
In Section \ref{Surfaces}, we prove the finiteness of twists for K3 surfaces and Enriques surfaces. In Section \ref{Hyperkahlervarieties}, we will prove the birational and automorphism cone conjectures for \hyp variety following the method by Bright--Logan--van Luijk, and prove the main theorem.
In Section \ref{Uniformbounds}, we argue the uniform boundedness of $\Tw_{k'/k} (X)$. 
In Section \ref{Derivedequivalent}, we recall definitions and generalities of derived equivalent varieties, and we prove the finiteness of $\Tw^{D}(X)$.

\subsection*{Conflict of interest}
The author declares no conflicts of interest associated with this manuscript.

\subsection*{Data availability}
Data sharing not applicable to this article as no datasets were generated or analysed during the current study.

\begin{ack}
The author is deeply grateful to his advisor Naoki Imai for deep encouragement and helpful advice. The author also would like to thank Tetsushi Ito for helpful comments on Lemma \ref{lemmamainrefined}.
Moreover, the author would like to thank Yohsuke Matsuzawa, Shou Yoshikawa, Kenta Hashizume, Alexei N. Skorobogatov, Yuki Yamamoto for helpful suggestions.
The author also thank the referee for constructive suggestions and reading the manuscript carefully.
The author was supported by JSPS KAKENHI Grant number JP19J22795.
\end{ack}

\section{Preliminaries}
\label{Preliminaries}
\subsection{Definitions and notations}
First, we review the definition of \hyp varieties.

\begin{definition}\label{defhyp}
\begin{enumerate}
\item
Let $k$ be a field of characteristic $0$.
Let $X$ be a smooth projective variety over $k$.
We say $X$ is an \emph{\hyp variety} if $X_{\overline{k}}$ is simply connected of even dimension $2n$ and there exists $\omega \in H^{0}(X, \Omega_{X/k}^{2})$ uniquely up to constant, such that $\omega^{n}$ vanishes nowhere. 
\item
Let $k$ be a subfield of $\C$.
Let $X$ be an \hyp variety over $k$. 
It is known that there exists a unique natural integral non-degenerate symmetric primitive bilinear pairing $q$ on $H^{2}(X_{\C},\Z)$, of signature $(3, b_{2}(X_{\C})-3)$ satisfying that there exists a positive rational number $\beta_{X}$ such that 
\begin{align*}
q(\alpha, \alpha)^{n} = \beta_{X} \int_{X} \alpha^{2n}
\end{align*}
for any $\alpha \in H^{2}(X_{\C},\Z)$,
and that $q(\alpha, \alpha)$ is positive for an ample class $\alpha \in H^{2}(X_{\C},\Z)$ (the second condition is automatic if $b_{2}(X_{\C}) \neq 6$).
We refer to $q$ as the \emph{Beauville--Bogomolov form}, and we denote this pairing $q$ by $(\ast, \ast)$ for short.
We note that $q$ is $\Aut(\C/k)$-invariant by the argument in \cite[Proposition 2.1.5]{Yang2019}.
\item
Let $k$ be a field of characteristic $0$.
Let $X$ be an \hyp variety over $k$.
Since $X$ is defined over a finitely generated subfield $k' \subset k$ (i.e. there exists an \hyp variety $Y$ over $k'$ such that $Y_{k} \simeq X$), 
by fixing an embedding $k' \subset \C$, we have an isomorphism
$H^{i}_{\et}(X_{\overline{k}},\Z_{\ell}) \simeq H^{i}(Y_{\C},\Z) \otimes \Z_{\ell}$.
Through this isomorphism, we can associate an integral non-degenerate symmetric bilinear pairing $q$ on $H^{2}_{\et}(X_{\overline{k}},\Z_{\ell})$.
By the argument in \cite[Proposition 2.1.5]{Yang2019} (see also \cite[Lemma 2.1.1]{Yang2019}), $q$ is independent of the choice of $k'$ and an embedding $k' \subset \C$, and $q$ is also $\Gal (\overline{k}/k)$-invariant.
We denote this pairing by $(\ast, \ast)$.
We also denote the $\Z_{\ell}$-valued symmetric bilinear pairing on $\Pic_{X/k} (k)$ which is given by the composition of the Chern character and $q$ by $(\ast, \ast)$.
Here, we denote the Picard functor of $X$ over $k$ by $\Pic_{X/k}$.
This pairing is actually $\Z$-valued as in \cite[Proposition 2.1.5]{Yang2019}.
\end{enumerate}
\end{definition}

\begin{remark}
Let $X_{1}, X_{2}$ be an \hyp variety over a field $k$ of characteristic $0$.
Let $f \colon X_{1} \dashrightarrow X_{2}$ be a birational map.
Since $K_{X_{i}}$ are trivial and $X_{i}$ are terminal, the map $f$ is pseudo-isomorphism (i.e.\, isomorphic in codimension $1$).
Indeed, if we take a smooth projective variety $Y$ over $k$ with birational morphisms $\pi_{1}\colon Y \rightarrow X_{1}$ and $\pi_{2}\colon Y \rightarrow X_{2}$, then the set of $\pi_{i}$-exceptional prime divisors are no other than the set of prime divisors $E \subset Y$ such that $\ord_{E}(K_{Y}-\pi_{i}^{\ast} (K_{X_{i}})) >0$, which do not depend on $i$.
Therefore, one can pull back a line bundle on $X_{2}$ and an element of $\Pic_{X/k}(k)$ via $f$.
\label{remsmall}
\end{remark}

Next, we recall the definition of several cones of surfaces or \hyp varieties.

\begin{definition}
Let $k$ be a field.
Let $X$ be a smooth projective variety over $k$.
Suppose that $X$ is a surface (resp.\ the characteristic of $k$ is $0$ and $X$ is an \hyp variety over $k$). 
Let $\Lambda_{X}$ be the free part of N\'{e}ron--Severi lattice $(\Pic_{X/k}/ \Pic_{X/k}^{0}) (k)$. Here, we denote the Picard functor of $X$ over $k$ and its identity component by $\Pic_{X/k}$ and $\Pic_{X/k}^{0}$.  \footnote{Note that $\Pic_{X/k}(k)$ is not necessarily equal to $\Pic(X)$, but $\Pic_{X/k}(k)\otimes_{\Z}\Q = \Pic (X) \otimes_{\Z}\Q$ holds in general.}
\begin{enumerate}
\item 
The \emph{positive cone} $\mathcal{C}_{X} \subset \Lambda_{X,\R}$ is the connected component of the cone of all elements $\lambda \in \Lambda_{X,\R}$ with $(\lambda,\lambda)>0$ satisfying that $\mathcal{C}_{X}$ contains the ample divisor classes. 
Here, $(\ast, \ast)$ is the intersection number (resp.\ Beauville--Bogomolov form).
We denote the closure of $\pos_{X}$ in $\Lambda_{X,\R}$ by $\overline{\mathcal{C}_{X}}$. We denote the convex hull of $\overline{\pos_{X}} \cap \Lambda_{X,\Q}$ by $\pos_{X}^{+}$.
\item
The \emph{ample cone} $\amp_{X} \subset \mathcal{C}_{X}$ is the cone generated by all ample divisor classes. We denote the closure of $\amp_{X}$ in $\Lambda_{X,\R}$ by $\nef_{X}.$
We denote the convex hull of $\nef_{X} \cap \Lambda_{X,\Q}$ in $\Lambda_{X,\R}$ by $\nef_{X}^{+}$.
\item
The \emph{movable cone} $\mv_{X} \subset \overline{\mathcal{C}}$ is the cone generated by movable divisor classes. 
Here, we say a divisor $D$ of $X$ is movable if the base locus of the linear system of $D$ has codimension $\geq 2$.
We denote the closure of $\mv_{X}$ in $\Lambda_{X,\R}$ by $\overline{\mv}_{X}$.
We denote the convex hull of $\overline{\mv}_{X} \cap \Lambda_{X,\Q}$ in $\Lambda_{X,\R}$ by $\mv_{X}^{+}$.
\item 
A \emph{polarization} (resp.\,\emph{quasi-polarization}) on $X$ is an element $L \in \Pic_{X/k}(k)$ which is ample (resp.\,nef and big).  
\end{enumerate}
\end{definition}

\begin{lemma}\label{conerat}
Let $k$ be a field, and $k_{s}$ the separable closure of $k$. 
Let $X$ be a smooth projective variety over $k$.
Then the following hold.
\begin{enumerate}
\item
$\amp_{X} = \amp_{X_{k_{s}}} \cap \Lambda_{X,\R}$.
\item
$\nef_{X} = \nef_{X_{k_{s}}} \cap \Lambda_{X,\R}$.
\item
$\mv_{X} = \mv_{X_{k_{s}}} \cap \Lambda_{X,\R}$.
\item
$\overline{\mv}_{X} = \overline{\mv}_{X_{k_{s}}} \cap \Lambda_{X,\R}$.
\end{enumerate}
\end{lemma}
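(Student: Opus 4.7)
The plan is to carry out Galois descent for each of the four cones; throughout I identify $\Lambda_{X,\R}$ with the Galois-invariant subspace of $\Lambda_{X_{k_s},\R}$, as justified by the footnote to the preceding definition. For part (1), the inclusion $\amp_X\subset\amp_{X_{k_s}}\cap\Lambda_{X,\R}$ is immediate from base-change stability of ampleness. For the reverse, given $\lambda\in\amp_{X_{k_s}}\cap\Lambda_{X,\R}$, I would expand $\lambda=\sum a_iD_i$ as a positive real combination of integral ample classes over $k_s$ and replace each $D_i$ by its finite Galois-orbit sum $E_i$; averaging the Galois action against the Galois-invariance of $\lambda$ gives $\lambda=\sum c_i E_i$ with $c_i>0$, and each $E_i$ is Galois-invariant with ample base change (being a positive $\Z$-sum of ample classes), hence descends to an ample integral class on $X$ by Galois descent together with descent of ampleness along field extensions.

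Part (3) runs the identical orbit-sum argument with ``movable'' in place of ``ample''. The two ingredients I need are: movability is preserved by both the Galois action on $X_{k_s}$ (which is by $k$-automorphisms, preserving codimensions of base loci) and by base change $X\to X_{k_s}$ (by flat base change of global sections); and the sum of two movable integral classes is movable, thanks to the inclusion $B(|D_1+D_2|)\subset B(|D_1|)\cup B(|D_2|)$ (since $D_1'+D_2'\in|D_1+D_2|$ whenever $D_i'\in|D_i|$) combined with the fact that a union of closed subsets of codimension $\geq 2$ still has codimension $\geq 2$.

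For (2) and (4) I pass to closures. Part (2) follows from (1) by the general convex-analytic fact that for an open convex cone $U$ in a real vector space $V$ and a linear subspace $W\subset V$ with $U\cap W\neq\emptyset$, the closure of $U\cap W$ in $W$ equals the closure of $U$ in $V$ intersected with $W$; this is proved by sliding any point of the right-hand side toward an interior point of $U\cap W$, and applies here since $\amp_X=\amp_{X_{k_s}}\cap\Lambda_{X,\R}$ is nonempty ($X$ being projective). For (4), the inclusion $\overline{\mv}_X\subset\overline{\mv}_{X_{k_s}}\cap\Lambda_{X,\R}$ is clear from (3); conversely, given $\lambda\in\overline{\mv}_{X_{k_s}}\cap\Lambda_{X,\R}$, I approximate by $\lambda_n\in\mv_{X_{k_s}}$ and Galois-average each $\lambda_n$ (each involves finitely many classes over some finite extension $k_n/k$, so the average over $\Gal(k_n/k)$ makes sense), obtaining a sequence $\tilde\lambda_n\in\mv_{X_{k_s}}\cap\Lambda_{X,\R}=\mv_X$ by (3), convexity, and Galois-stability of $\mv_{X_{k_s}}$, with $\tilde\lambda_n\to\lambda$. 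The main obstacle is purely Galois-descent bookkeeping: ensuring that the orbit sums in (1) and (3) correspond to honest line bundles on $X$ rather than merely to classes in $\Pic\otimes\Q$, which rests on the Picard-scheme identification of the footnote together with the stability properties of ampleness and movability under Galois conjugation and base change noted above.
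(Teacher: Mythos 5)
Your argument for parts (1) and (3) is exactly the paper's: write a Galois-invariant element of the right-hand side as a positive real combination of ample (resp.\ movable) classes defined over a finite Galois extension, replace each class by its orbit sum (the paper uses the orbit average, which differs only by a positive scalar), and use invariance of the original element to re-express it in terms of these invariant generators; your justifications that orbit sums of ample (resp.\ movable) classes are again ample (resp.\ movable, via $B(|D_1+D_2|)\subset B(|D_1|)\cup B(|D_2|)$) are the details the paper leaves implicit. Where you genuinely diverge is in (2) and (4): the paper disposes of these by citing \cite[Lemma 3.8]{Bright2019}, whereas you give self-contained arguments --- the convex-geometry fact that $\overline{U\cap W}=\overline{U}\cap W$ for an open convex cone $U$ meeting a subspace $W$ yields (2) from (1), and approximation combined with Galois averaging yields (4) from (3). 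Both are correct. For (4), the one point worth making explicit is why $\tilde\lambda_n\to\lambda$: the operators $\sigma|_{\Lambda_{X_{k_s},\R}}$ must be uniformly bounded, which holds because the Galois action on the finitely generated group $\Lambda_{X_{k_s}}$ factors through a finite quotient; equivalently, once $k_n$ contains a fixed splitting field the averaging map is a single continuous projection $P$ onto the invariants, which fixes $\lambda$, sends $\mv_{X_{k_s}}$ into $\mv_X$ by (3), and hence sends $\overline{\mv}_{X_{k_s}}$ into $\overline{\mv}_X$. Your closing caveat about passing from classes in $\Pic\otimes\Q$ to honest line bundles is the right thing to flag and is harmless here, since the cones are generated with positive real coefficients and are therefore unchanged by replacing generators with positive integer multiples.
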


\begin{proof}
First, we prove the assertions (1) and (3).
Take an element $x= \sum a_{i} x_{i}$ of the right-hand side of (1) (resp.\,(3)), where $a_{i}$ is a positive real number and $x_{i} \in \Lambda_{X_{k_{s}}}$  is an ample divisor class (resp.\,movable divisor class). Take a finite Galois extension $k'/k$ such that $x_{i} \in \Lambda_{X_{k'}}$ for any $i$.
We put $x_{i}' =  \sum_{\sigma \in \Gal (k'/k)} \sigma (x_{i})/ \# (\Gal (k'/k))$, which is also an ample divisor class (resp.\,movable divisor class).
Then we have $x = \sum a_{i} x_{i}'$, which is an element of the left-hand side.
The assertions (2) and (4) follow from \cite[Lemma 3.8]{Bright2019}.
\end{proof}

Finally, we introduce a notation for the set of twists.
\begin{definition}
Let $k$ be a field, and $X$ a variety over $k$.
Let $k'/k$ be an extension of fields.
We put 
\begin{align*}
\Tw_{k'/k}(X) \coloneqq
\left\{
Y \colon \textup{variety over }k \left.\mid Y_{k'} \simeq_{k'} X_{k'}
\right. 
\right\}/k\textup{-isom}.
\end{align*}
\end{definition}

\subsection{Almost abelian grouops}
In this section, we prove the finiteness of the group cohomology of an almost abelian group, which will be used to prove Theorem \ref{hypkfintwist} for an \hyp variety of geometric Picard rank $2$.
Moreover, combining with Oguiso's result, we also propose the finiteness of twists for certain Calabi--Yau threefolds.

First, we recall the definition of almost abelian groups, which was defined by Oguiso.
\begin{definition}[{cf.\,\cite[Section 8]{Oguiso2008}}]
\label{almab}
A group $G$ is \emph{an almost abelian group of finite rank $r$} if there exists a normal subgroup $G^{0}$ of $G$ of finite index, such that there exists a finite normal subgroup $K$ of $G^{0}$ with the exact sequence
\begin{equation}\label{ex seq}
1 \rightarrow K \rightarrow G^{0} \rightarrow \Z^{r} \rightarrow 1.
\end{equation}
\end{definition}

\begin{prop}
\label{propgalcoh}
Let $\Gamma$ be a finite group.
Let $G$ be an almost abelian group of finite rank $r$ admitting $\Gamma$-action $\sigma \colon \Gamma \rightarrow \Aut (G)$.
Then the group cohomology $H^{1}(\Gamma, G)$ is finite.
\end{prop}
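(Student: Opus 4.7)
The plan is to reduce to Lemma \ref{lemgalcoh}(3) by producing a $\Gamma$-stable version of the defining extension of $G$. The subtlety is that in Definition \ref{almab} the finite-index subgroup $G^{0}$ is not assumed to be $\Gamma$-stable, and the finite normal subgroup $K\subset G^{0}$ need not be $\Gamma$-stable either; so Lemma \ref{lemgalcoh}(3) cannot be applied directly to $G^{0}$. This is where the real work lies; once it is done, the non-abelian long exact sequence of pointed sets, together with the twisting trick already carried out in the proof of Lemma \ref{lemgalcoh}(3), will finish the argument.

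First I would form
\[
G^{00} \coloneqq \bigcap_{\gamma \in \Gamma} \sigma(\gamma)(G^{0}).
\]
Each $\sigma(\gamma)(G^{0})$ is a normal subgroup of $G$ (image of a normal subgroup under an automorphism) of the same finite index as $G^{0}$, and $\Gamma$ is finite, so $G^{00}$ is a normal, $\Gamma$-stable subgroup of $G$ of finite index. Setting $K' \coloneqq K \cap G^{00}$, the composition $G^{00} \hookrightarrow G^{0} \twoheadrightarrow \Z^{r}$ has kernel $K'$ and finite-index (hence free of rank $r$) image, giving an exact sequence
\[
1 \longrightarrow K' \longrightarrow G^{00} \longrightarrow \Z^{r} \longrightarrow 1
\]
with $K'$ finite. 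Moreover $K'$ coincides with the set of torsion elements of $G^{00}$ (elements torsion in $G^{00}$ map to a torsion element of $\Z^{r}$, so to $0$, so lie in $K'$, and conversely $K'$ is finite); hence $K'$ is a characteristic subgroup of $G^{00}$, in particular $\Gamma$-stable. So $G^{00}$ satisfies the hypothesis of Lemma \ref{lemgalcoh}(3), giving the finiteness of $H^{1}(\Gamma, G^{00})$. Crucially, any twist $^{c}G^{00}$ by a cocycle $c\colon \Gamma \to G$ has the same underlying abstract group and the same subgroup $K'$, and $K'$ remains $\sigma_{c}$-stable (it is characteristic), so the same argument gives finiteness of $H^{1}(\Gamma, {}^{c}G^{00})$ as well.

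Finally I would apply the long exact sequence of pointed sets
\[
H^{1}(\Gamma, G^{00}) \longrightarrow H^{1}(\Gamma, G) \longrightarrow H^{1}(\Gamma, G/G^{00})
\]
associated with the $\Gamma$-equivariant extension $1 \to G^{00} \to G \to G/G^{00} \to 1$. Since $G/G^{00}$ is finite, the rightmost term is finite by Lemma \ref{lemgalcoh}(1). Using the twisting argument exactly as in the proof of Lemma \ref{lemgalcoh}(3) (replace $G$ by $^{c}G$ for a cocycle $c$ representing a class in $H^{1}(\Gamma, G)$ whose image in $H^{1}(\Gamma, G/G^{00})$ we wish to analyze), each nonempty fiber of $H^{1}(\Gamma, G) \to H^{1}(\Gamma, G/G^{00})$ is covered by $H^{1}(\Gamma, {}^{c}G^{00})$, which is finite by the previous paragraph. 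Thus $H^{1}(\Gamma, G)$ is a finite set.

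The main obstacle, as indicated, is Step 1--2: constructing a $\Gamma$-stable subgroup of finite index that still carries an extension structure by $\Z^{r}$ with finite kernel. The intersection trick handles $\Gamma$-stability and the characterization of $K'$ as the torsion subgroup handles the remaining normality/stability issue for the kernel; after that, everything reduces to the pattern already established in Lemma \ref{lemgalcoh}.
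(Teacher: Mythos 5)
Your proof is correct and follows essentially the same route as the paper: the paper likewise forms $G^{1}=\bigcap_{\gamma\in\Gamma}\gamma G^{0}$ to obtain a $\Gamma$-stable normal subgroup of finite index carrying an extension by $\Z^{r}$ with finite kernel, and then concludes via the twisted exact sequence of pointed sets exactly as you do. Your extra remark identifying $K'$ as the (characteristic) torsion subgroup is a clean justification of a point the paper passes over silently.
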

\begin{proof}
Let $G^{0}$ be as in Definition \ref{almab}.
We put
\[
G^{1} \coloneqq \bigcap_{\gamma \in \Gamma} \gamma G^{0}.
\]
We note that $G^{1}$ is a normal subgroup of $G$ of finite index, since
\[
[G^{0} \colon G^{0}\cap\gamma(G^{0})] \leq [G\colon \gamma(G^{0})].
\]
Then the exact sequence (\ref{ex seq}) induces the exact sequence
\[
1 \rightarrow K\cap G^{1} \rightarrow G^{1} \rightarrow \Z^{r}.
\]
Since $G^{1}$ is of finite index in $G^{0}$, the image of the right arrow is also of finite index, and so it is isomorphic to $\Z^{r}$.
Therefore, replacing $G^{0}$ with $G^{1}$, we may assume that $G^{0}$ is stable under the $\Gamma$-action.
Then $K$ is also stable under the $\Gamma$-action.
Therefore, by \cite[Proof of D.1.7]{Degtyarev}, $H^1 (\Gamma, G)$ is finite.
\end{proof}

\begin{remark}
\label{remarkgalcoh}
By the proof of Proposition \ref{propgalcoh}, the number $\# H^{1} (\Gamma, G)$ is bounded above by the constant which depends only on the order $\# \Gamma$, the index of $G^{0}$ in $G$, the order $\# K$, and $r$ in Definition \ref{almab}.
Here, we note that the number of all the isomorphism classes of $\Gamma$-module $\Z^{r}$ is bounded above by $r$ and $\#\Gamma$ by \cite[Section 5, (a)]{Borel1963}, and thus so is $\# H^{1} (\Gamma, \Z^{r})$.
\end{remark}

\begin{definition}
Let $k$ be a field.
A Calabi--Yau variety over $k$ is a smooth projective variety satisfying that $\omega_{X/k} \simeq \oo_{X}$ and $H^{1}(X, \oo_{X})=0$.
\end{definition}

\begin{corollary}
Let $k'/k$ be a finite extension of fields of characteristic $0$, $X$ a variety over $k$.
Then the set $\Tw_{k'/k} (X)$ is finite in the following cases.
\begin{enumerate}
\item
$X$ is a Calabi--Yau threefolds satisfying that $\rk \Pic(\barX) \leq 3$.
\item
$X$ is a Calabi--Yau varieties of odd dimension satisfying that $\rk \Pic(\barX) \leq 2$.
\end{enumerate}
\end{corollary}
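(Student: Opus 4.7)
The plan is to combine Galois descent with Oguiso's structure theorems on automorphism groups of Calabi--Yau varieties of small Picard rank, and then feed the output into Proposition \ref{propgalcoh}.

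First, I would replace $k'$ by its Galois closure $k''$ over $k$ inside a fixed algebraic closure $\overline{k}$: any $k$-form of $X$ that becomes trivial over $k'$ also becomes trivial over $k''$, so the natural inclusion $\Tw_{k'/k}(X) \subseteq \Tw_{k''/k}(X)$ reduces the problem to the case where $k'/k$ is finite Galois with group $\Gamma$. Standard descent theory then provides an injection
\[
\Tw_{k'/k}(X) \hookrightarrow H^{1}\bigl(\Gamma, \Aut(X_{k'})\bigr),
\]
so it suffices to show that this Galois cohomology set is finite.

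Next, the key geometric input is a theorem of Oguiso (see \cite[Section 8]{Oguiso2008} and the references therein): under either of the numerical hypotheses (1) or (2), the full geometric automorphism group $\Aut(\barX)$ is almost abelian of finite rank in the sense of Definition \ref{almab}. Since $\Aut(X_{k'})$ embeds into $\Aut(\barX)$ as the subgroup of $\Gal(\overline{k}/k')$-fixed automorphisms, I would observe that being almost abelian of finite rank passes to subgroups: if $G$ is almost abelian witnessed by $1 \to K \to G^{0} \to \Z^{r} \to 1$ with $K$ finite and $G^{0}$ normal of finite index in $G$, then for any subgroup $H \leq G$ the intersection $H \cap G^{0}$ is normal of finite index in $H$, contains the finite subgroup $H \cap K$, and has quotient isomorphic to a subgroup of $\Z^{r}$, hence to $\Z^{r'}$ for some $r' \leq r$. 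Applying this to $H = \Aut(X_{k'})$ shows that $\Aut(X_{k'})$ is itself almost abelian of finite rank.

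Finally, I would apply Proposition \ref{propgalcoh} to the finite group $\Gamma$ acting on the almost abelian group $\Aut(X_{k'})$; this yields the finiteness of $H^{1}(\Gamma, \Aut(X_{k'}))$, and hence of $\Tw_{k'/k}(X)$. The main obstacle is essentially bibliographic rather than mathematical: one has to extract from Oguiso's work the precise statement that $\Aut(\barX)$ is almost abelian of finite rank in each of the two numerical regimes. Once this input is cited, the remainder of the argument is a purely formal combination of Galois descent with Proposition \ref{propgalcoh}.
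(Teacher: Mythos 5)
Your proposal is correct and follows essentially the same route as the paper: reduce to the Galois case, embed $\Tw_{k'/k}(X)$ into $H^{1}(\Gamma,\Aut(X_{k'}))$, invoke the structure theorems of Oguiso and Lazi\'{c}--Oguiso--Peternell to see that $\Aut(\barX)$ --- and hence its subgroup $\Aut(X_{k'})$ --- is finite or almost abelian of finite rank, and conclude by Proposition \ref{propgalcoh}. The one point you dismiss as ``bibliographic'' that the paper actually argues is the transfer of those theorems, which are stated for complex projective manifolds, to an arbitrary algebraically closed field of characteristic $0$: the paper spreads $X$ out over a finitely generated subfield and uses that a Calabi--Yau variety has no infinitesimal automorphisms (so the automorphism scheme is unramified) to identify $\Aut(\barX)$ with the automorphism group of a complex model before citing \cite[Theorem 1.2]{Oguiso2014} and \cite[Theorem 1.1]{Lazic2018}.
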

\begin{proof}
First, take a finitely generated subfield $k_{0} \subset k$ and a variety $X_{0}$ over $k_{0}$ such that $X_{0,k} \simeq X.$
Fix an embedding $\iota \colon \overline{k_{0}} \hookrightarrow \C$.
Since a Calabi--Yau variety $X$ has no infinitesimal automorphism by $h^{0,1} (X)= h^{n-1,0} (X) = 0$, the automorphism scheme $\Aut_{X_{0}/k_{0}}$ is unramified over $k_{0}.$
Thus we have 
\[
\Aut (\barX) \simeq \Aut (X_{0,\overline{k_{0}}}) \simeq \Aut (X_{0,\C}).
\]
Therefore, $\Aut (\barX)$ is finite or almost abelian of rank $1$ by \cite[Theorem 1.2]{Oguiso2014} and \cite[Theorem 1.1]{Lazic2018}.
Since $\Aut (X_{k'}) \subset \Aut (\barX)$, the group $\Aut (X_{k'})$ is also finite or almost abelian of rank $1$.
By Proposition \ref{propgalcoh}, the set $\Tw_{k'/k} (X)$ is a finite set.
\end{proof}

\section{The case of surfaces}
\label{Surfaces}
\subsection{K3 surfaces}
In this subsection, we discuss the finiteness of $\Tw_{k'/k}$ in the case of K3 surfaces.
First of all, we recall the definition of K3 surfaces.

\begin{definition}\label{defK3}
Let $k$ be a field.
A \emph{K3 surface over $k$} is a smooth projective surface $X$ over $k$
satisfying that $\omega_{X/k} \simeq \oo_{X}$ and $H^{1}(X, \oo_{X})=0$.
A K3 surface $X$ over $k$ is called \emph{supersingular} if the Picard rank of $X_{\overline{k}}$ is $22$.
\end{definition}

The main theorem of this section is the following.

\begin{theorem}\label{fintwistK3}
Let $k$ be a field, and $X$ a K3 surface over $k$.
Let $k'/k$ be a finite extension of fields.
Suppose that the characteristic of $k$ is different from $2$ or $X$ is not supersingular.
Then, the set
$\Tw_{k'/k}(X)$ 
is a finite set.
\end{theorem}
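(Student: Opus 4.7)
The plan is to reduce the assertion to the classical Galois-cohomological finiteness for polarized K3 surfaces, by producing on every twist a polarization of uniformly bounded degree. First I would replace $k'$ by its Galois closure inside a separable closure $k_{s}$: an isomorphism $Y_{k'}\simeq X_{k'}$ base-changes to one over the Galois closure $\widetilde{k}$, so the map $\Tw_{k'/k}(X)\hookrightarrow\Tw_{\widetilde{k}/k}(X)$ is injective and we may assume $k'/k$ is Galois with group $\Gamma$.

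The heart of the proof is a uniform bound: a constant $D=D(X,[k':k])$ such that every $Y\in\Tw_{k'/k}(X)$ carries a polarization $L_{Y}$ with $L_{Y}^{2}\leq D$. The natural construction, starting from a fixed polarization $L$ on $X$ and a $k'$-isomorphism $\phi_{Y}\colon Y_{k'}\simeq X_{k'}$, is to take a suitable power of the Galois average $\bigotimes_{\sigma\in\Gamma}\sigma^{\ast}\phi_{Y}^{\ast}L_{k'}$; this descends to an ample class on $Y$ once any Brauer obstruction has been killed. A priori its self-intersection is uncontrolled, since the associated cocycle $\psi_{\sigma}=\phi_{Y}\sigma\phi_{Y}^{-1}\sigma^{-1}\in\Aut(X_{k_{s}})$ can move $L_{k'}$ far within its $\Aut(X_{k_{s}})$-orbit and the pairwise intersections $L_{k'}\cdot\psi_{\gamma}^{\ast}L_{k'}$ need not be bounded. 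To control the degree, I would follow the Bright--Logan--van Luijk strategy: the cone conjecture for K3 surfaces over $k_{s}$, available in exactly the range of the theorem's hypothesis, supplies a rational polyhedral fundamental domain $\Pi$ for $\Aut(X_{k_{s}})$ acting on $\nef_{X_{k_{s}}}^{+}$; $\Pi$ may be chosen stable under the natural $\Gamma$-action on $\Lambda_{X_{k_{s}}}$, and altering $\phi_{Y}$ by a well-chosen element of $\Aut(X_{k_{s}})$ changes the cocycle by a coboundary and repositions the averaged class inside $\Pi$, where the combinatorics of $\Pi$ together with $|\Gamma|$ bounds the self-intersection.

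Granted this degree bound, the conclusion follows from the standard polarized finiteness argument. For each $d\leq D$ there are only finitely many $\Aut(X_{k_{s}})$-orbits of polarization classes of self-intersection $d$ (another consequence of the cone conjecture), and for each orbit representative $M$ the group $\Aut(X_{k_{s}},M)$ is finite, being a subgroup of the orthogonal group of $\Lambda_{X_{k_{s}}}$ fixing a class of positive square. The $k$-forms of the polarized variety $(X,M)$ split by $k'/k$ are then classified by the finite set $H^{1}(\Gamma,\Aut(X_{k'},M_{k'}))$, and forgetting the polarization covers $\Tw_{k'/k}(X)$ by a finite union of finite sets.

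The main obstacle is the uniform degree bound. Intersection numbers within a single $\Aut(X_{k_{s}})$-orbit of polarizations are unbounded in general, so this step cannot be handled by naive Galois averaging and requires the polyhedral geometry of $\Pi$ together with a careful rechoice of the cocycle representative. Once this combinatorial input from the Bright--Logan--van Luijk framework is installed, the remainder of the argument is a routine application of polarized Galois cohomology.
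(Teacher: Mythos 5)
Your overall architecture---produce on every twist a polarization of uniformly bounded degree, then invoke finiteness of $\Aut(X_{k'})$-orbits of polarizations of that degree and finiteness of automorphisms of polarized K3 surfaces---is exactly the paper's, and the last two steps are sound (they are Lemma \ref{finquasipol} and the standard polarized descent argument). The gap is in the degree bound, which you correctly single out as the main obstacle but do not actually resolve. Moving the Galois-averaged class into a rational polyhedral fundamental domain $\Pi$ for $\Aut(X_{k_{s}})$ on $\nef^{+}_{X_{k_{s}}}$ is done by applying an isometry of $\Lambda_{X_{k_{s}}}$, which leaves the self-intersection unchanged, and $\Pi$ is an unbounded cone containing integral classes of arbitrarily large square; so membership in $\Pi$ gives no bound on the degree. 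Nor can the cocycle be repositioned pointwise: a single coboundary $g$ replaces $\psi_{\sigma}$ by $g\psi_{\sigma}\,{}^{\sigma}g^{-1}$ simultaneously for all $\sigma$, so you cannot independently push each $\psi_{\sigma}(L)$ into a bounded region, and the pairwise intersections $(\psi_{\sigma}(L),\psi_{\tau}(L))$, hence the square of the average, remain uncontrolled.

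The paper's Lemma \ref{weylK3} obtains the bound by a different mechanism, with no averaging. By Borel's theorem the lattice $\Lambda_{X_{k_{s}}}$ carries only finitely many conjugacy classes of $\Gal(k_{s}/k)$-actions, so the Galois lattices $\Lambda_{Y_{k_{s}}}$ of all twists fall into finitely many isomorphism classes. Given $Y_{1},Y_{2}$ with isomorphic Galois lattices, one transports a fixed ample class $y$ of $Y_{1}$ through the isometry and corrects it by an element of the Galois-invariant Weyl group $R_{Y_{2}}$ generated by $(-2)$-reflections, using that $\nef_{Y_{2}}$ is a fundamental domain for $R_{Y_{2}}$ on the positive cone (\cite[Proposition 3.7]{Bright2019}); the resulting nef class meets every $(-2)$-class nontrivially, hence is ample, and it has the same degree as $y$. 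This yields only finitely many possible degrees. Note that the fundamental domain used here is for the Weyl group, not for $\Aut$; the $\Aut$-fundamental domain from the cone conjecture enters only in the later step. A secondary point: when $k'/k$ is inseparable there is no Galois closure of $k'$ inside $k_{s}$, and the paper first reduces to the separable closure of $k$ in $k'$ by using that the Isom scheme of two K3 surfaces is unramified; your reduction step silently assumes separability.
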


\begin{remark}
If $k'/k$ is a solvable extension, then Theorem \ref{fintwistK3} can be proved by the Galois cohomological argument as in the proof in \cite[Proposition 2.4.1]{Lieblich2014}.
\end{remark}

To prove Theorem \ref{fintwistK3}, we recall the cone conjecture for K3 surfaces.

\begin{theorem}[Cone conjecture]\label{coneK3}
Let $k$ be a field, and $X$ a K3 surface over $k$.
Suppose that characteristic of $k$ is not equal to $2$ or $X$ is not supersingular.
Then, the action of $\Aut(X)$ on $\nef^{+}_{X}$ admits a rational polyhedral fundamental domain.
\end{theorem}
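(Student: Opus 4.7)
The plan is to argue by Galois descent from the algebraic closure, following the strategy of Bright--Logan--van Luijk \cite{Bright2019}, who establish exactly this statement under the hypothesis $\chara k \neq 2$. The argument splits into an input over $\overline{k}$ and a descent step down to $k$.

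For the input, the cone conjecture for K3 surfaces over an algebraically closed field says that $\Aut(\barX)$ acts on $\nef^{+}_{\barX}$ with a rational polyhedral fundamental domain; this is a theorem of Sterk, refined by Kawamata, in characteristic $0$, and of Lieblich--Maulik in positive characteristic under the non-supersingularity hypothesis. The range of $(\chara k, X)$ in our statement is exactly where this input is available. Since the automorphism scheme of a K3 surface is unramified over $k$, every $\overline{k}$-automorphism of $X$ is already defined over $\sepk$, so the same fundamental domain governs the action of $\Aut(\sepX)$ on $\nef^{+}_{\sepX}$.

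For the descent step, I would imitate the Bright--Logan--van Luijk procedure. By Lemma \ref{conerat} together with the analogous statement for the convex-hull closure $\nef^{+}$, the cone $\nef^{+}_X$ is essentially the Galois-invariant subcone of $\nef^{+}_{\sepX}$, so the task is to produce a rational polyhedral fundamental domain for the $\Aut(X)$-action on this invariant subcone starting from the one over $\sepk$. Working over a finite Galois extension $k'/k$ large enough that the fundamental domain and the relevant automorphisms are all defined over $k'$, one applies a Looijenga-type averaging construction to produce a $\Gal(k'/k)$-stable rational polyhedral fundamental domain, intersects it with $\Lambda_{X,\R}$, and uses a finite Galois-cohomology comparison to identify $\Aut(X)$ with a finite-index subgroup of the Galois-invariant automorphisms of $\sepX$.

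The main obstacle is to verify that no step of the Bright--Logan--van Luijk argument secretly uses the blanket hypothesis $\chara k \neq 2$. Their descent treats the cone conjecture over $\overline{k}$ as a black box and is otherwise a formal combination of rational polyhedral geometry and finite Galois cohomology, so once Lieblich--Maulik supply that black box in characteristic $2$ for non-supersingular K3 surfaces, the extension should be essentially automatic. The two points I would scrutinize carefully are the precise comparison between $\Aut(X)$ and the Galois fixed points of $\Aut(\sepX)$ (again controlled by the unramifiedness of the automorphism scheme), and the passage from $\nef$ to the convex hull $\nef^{+}$ in the descent formalism over $\sepk$.
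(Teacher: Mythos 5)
Your proposal matches the paper's proof, which simply cites \cite[Corollary 3.15]{Bright2019} for $\chara k \neq 2$ and observes that the same Galois-descent argument goes through verbatim in characteristic $2$ for non-supersingular $X$ once the geometric cone conjecture (via Lieblich--Maulik) supplies the input over $\overline{k}$. You have spelled out the internals of the Bright--Logan--van Luijk descent in more detail than the paper does, but the strategy and the two points you flag for scrutiny are exactly the ones the paper relies on.
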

\begin{proof}
See \cite[Corollary 3.15]{Bright2019} for the case of $\chara k \neq 2$.
They use the hypothesis on characteristic only when they cite the result over an algebraically closed field \cite{Lieblich2018} (see \cite[Proof of Proposition 3.10]{Bright2019}), which is valid for non-supersingular K3 surface in characteristic $2$ (see remarks at the end of \cite{Lieblich2018}).
\end{proof}

The cone conjecture implies the finiteness of polarizations of bounded degree, by the following easy lemma.

\begin{lemma}\label{finquasipol}
Let $\Lambda$ be a $\Z$-lattice (i.e. $\Z$-module with a symmetric bilinear pairing valued in $\Z$) of index $(1,\rho-1)$, $\pos_{\Lambda} \subset \Lambda_{\R}$ a positive cone (i.e.\, one connected component of the cone of all elements $\lambda \in \Lambda_{\R}$ with $(\lambda, \lambda) > 0$), $\overline{\pos_{\Lambda}}$ the closure of $\pos_{\Lambda}$
,and $\Pi \subset \overline{\pos_{\Lambda}}$ a rational polyhedron.  
We fix a positive integer $d \in \Z$.
Then there exist only finitely many integral elements of square $d$ in $\Pi$.
In particular, in the setting of Theorem \ref{coneK3}, the set of quasi-polarizations on $X$ of degree $d$ modulo $\Aut (X)$ is finite.
\end{lemma}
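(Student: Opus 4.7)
The strategy is to reduce to the case of a rational simplicial cone and then use integrality to bound barycentric coordinates. First, the rational polyhedral cone $\widetilde{\Pi} := \R_{\geq 0}\cdot\Pi$ generated by $\Pi$ from the origin still lies in $\overline{\pos_{\Lambda}}$ and contains $\Pi$, so it suffices to prove the assertion for $\widetilde{\Pi}$. Writing $\Pi = \mathrm{conv}(p_1,\ldots,p_m) + \sum \R_{\geq 0} v_j$ with rational $p_i, v_j$, one checks that $\widetilde{\Pi} = \sum \R_{\geq 0} p_i + \sum \R_{\geq 0} v_j$ is a rational polyhedral cone. Subdividing $\widetilde{\Pi}$ into finitely many rational simplicial subcones reduces the problem to the case $\Pi = \sum_{i=1}^{n} \R_{\geq 0}\,v_i$ with $\Q$-linearly independent primitive $v_i \in \Lambda \cap \overline{\pos_{\Lambda}}$. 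In this case every $\lambda \in \Pi$ has a unique expansion $\lambda = \sum_i c_i v_i$ with $c_i \geq 0$, and there is an integer $N > 0$, depending only on the $v_i$, so that $\lambda \in \Lambda$ forces each $c_i \in \frac{1}{N}\Z$ (take $N$ to be the index of $\sum \Z v_i$ in its saturation inside $\Lambda$).

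\textbf{Key positivity.} The main input is the following hyperbolic positivity: for any $v, w \in \overline{\pos_{\Lambda}} \setminus \{0\}$ not on the same ray, one has $(v, w) > 0$. If $v \in \pos_{\Lambda}$, then $v^\perp$ is negative definite by the signature $(1,\rho-1)$ hypothesis and is therefore disjoint from $\overline{\pos_{\Lambda}} \setminus \{0\}$, yielding $(v, w) \neq 0$; the sign is positive by continuity from $\pos_{\Lambda}$. If both $v, w$ are null boundary rays, $(v, w) = 0$ would make $\R v + \R w$ a $2$-dimensional totally isotropic subspace, which is impossible in signature $(1,\rho-1)$.

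\textbf{Main argument.} I proceed by induction on $n$. When some $c_i = 0$, $\lambda$ belongs to a proper face of $\Pi$ and the inductive hypothesis applies. Otherwise all $c_i \geq 1/N$, and expanding
\[
d = (\lambda, \lambda) = \sum_{i,j} c_i c_j (v_i, v_j),
\]
every summand is nonnegative by the key positivity. Hence for $i \neq j$, one has $c_i c_j (v_i, v_j) \leq d$; combined with $(v_i, v_j) > 0$ and $c_j \geq 1/N$, this gives $c_i \leq Nd/(v_i, v_j)$. Thus all $c_i$ lie in a finite subset of $\frac{1}{N}\Z$, yielding finiteness. The delicate point is that the hyperboloid $\{(\lambda, \lambda) = d\}$ is unbounded inside $\pos_{\Lambda}$ (it approaches the null cone asymptotically), so a naive compactness argument fails and integrality really is needed to convert product-type bounds into individual bounds on the $c_i$.

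\textbf{Application.} For the ``in particular'' assertion, Theorem \ref{coneK3} supplies a rational polyhedral fundamental domain $\Pi \subset \nef^{+}_{X}$ for $\Aut(X)$. Every quasi-polarization $L$ of degree $d$ lies in $\Lambda_X \cap \nef_X \subset \nef^{+}_X$, so it is $\Aut(X)$-equivalent to an element of $\Pi \cap \Lambda_X$ of degree $d$; the first part of the lemma then gives finiteness of these orbit representatives, whence finiteness of degree-$d$ quasi-polarizations modulo $\Aut(X)$.
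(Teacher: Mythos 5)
Your argument is correct and follows essentially the same strategy as the paper's: both rest on the nonnegativity of the pairing between elements of $\overline{\pos_{\Lambda}}$ together with an integrality-derived lower bound on coefficients, which the identity $(\lambda,\lambda)=d$ then converts into an upper bound, and both deduce the "in particular" clause by applying the fundamental domain from Theorem \ref{coneK3}. The only cosmetic difference is that the paper skips your simplicial subdivision and bounded-denominator step by instead using that $(\lambda, x_i)$ is a positive integer for each generator $x_i \in \Pi\cap\Lambda$, which already forces some coefficient to be bounded below by $n^{-1}M^{-1}$.
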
 

\begin{proof}
This is a well-known argument (see \cite[Chapter8, Corollary 4.10]{Huybrechts2016} for the ample case), but for the sake  of completeness, we include it.
The second statement follows by applying $\Lambda = \Lambda_{X}$.
Therefore, we will prove the first statement.
First, we note that for any $y,z \in \overline{\pos_{\Lambda}}$, we have $(y,z) \geq 0$.
Let $x_{1}, \ldots, x_{n} \in \Pi \cap \Lambda$ be a system of integral generators of a rational polyhedron. 
We put $b_{i,j} \coloneqq (x_{i}, x_{j}),$ and $M \coloneqq \max b_{i,j}$.
Let $x = \sum_{i=1}^{n} a_{i} x_{i} \in \Pi \cap \Lambda$ be a square $d$ element.
We have 
\begin{align*}
(x,x_{i}) = \sum_{1 \leq j \leq n} a_{j}b_{i,j} >0
\end{align*}
for any $1\leq i \leq n$.
Indeed, if $(x,x_{i}) = 0$, then we have $(x_{i}, x_{i}) <0 $ by the assumption on index, and it contradicts $(x_{i},x_{i}) \geq 0$.
Since the left-hand side is a positive integer, there exists $1 \leq j_{i} \leq n$ such that
$a_{j_{i}} \geq n^{-1}M^{-1}$ and $b_{i, j_{i}}>0$ (so $b_{i,j_{i}} \geq 1$).
On the other hand, since 
\begin{align*}
(x, x) = \sum_{i,j=1,\ldots, n} a_{i} a_{j} b_{i,j}=d,
\end{align*}
we have $a_{i}a_{j_{i}}b_{i,j_{i}} \leq d$.
Therefore, we have $a_{i} n^{-1}M^{-1} \leq d$.
Therefore, such $x$ should be located on a bounded domain that is independent of $x$, so it finishes the proof.
\end{proof}

\begin{lemma}\label{weylK3}
Let $k$ be a field, and $k'$ a separable algebraic extension of $k$. 
Let $X$ be a K3 surface over $k$.
Then there exists a positive number $d$ such that 
for any $Y \in \Tw_{k'/k} (X),$ the variety $Y$ admits a degree $d$ polarization.
\end{lemma}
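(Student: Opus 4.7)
The plan is to produce a polarization on every twist by transporting a carefully averaged polarization from $X$, and then to bound its degree uniformly using the cone conjecture on $X_{\overline{k}}$ together with the arithmetic structure of $\Aut(X_{\overline{k}})$.

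First I would reduce to the case where $k'/k$ is a finite Galois extension: this is harmless since $\Tw_{k'/k}(X) \subseteq \Tw_{k''/k}(X)$ whenever $k \subseteq k' \subseteq k''$ with $k''/k$ finite Galois. Set $G = \Gal(k'/k)$. For a twist $Y$, fix $\phi \colon Y_{k'} \xrightarrow{\sim} X_{k'}$ and use $\phi^*_{\overline{k}}$ to identify $\Pic(Y_{\overline{k}}) = \Pic(X_{\overline{k}}) =: \Lambda$; under this identification the ample and nef cones of $Y_{\overline{k}}$ coincide with those of $X_{\overline{k}}$, while the Galois action on $\Pic(Y_{\overline{k}})$ becomes $\rho_Y(\sigma) = c(\sigma)\rho_X(\sigma)$ for some $1$-cocycle $c \colon G \to \Aut(X_{\overline{k}})$. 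Since $\rho_Y(G)$ preserves $\nef_{X_{\overline{k}}}$ and the stabilizer of this nef cone in $\algO(\Lambda)$ coincides, up to finite index, with the image of $\Aut(X_{\overline{k}})$, I may regard $\rho_Y(G)$ as a finite subgroup of $\Aut(X_{\overline{k}})$.

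By Theorem \ref{coneK3} the group $\Aut(X_{\overline{k}})$ admits a rational polyhedral fundamental domain on $\nef_{X_{\overline{k}}}^+$, so its image in $\algO(\Lambda)$ is arithmetic and in particular has only finitely many conjugacy classes of finite subgroups. Let $F_1, \ldots, F_N \subseteq \Aut(X_{\overline{k}})$ be representatives. For each $F_i$ I form the average $A_i := \sum_{\gamma \in F_i} \gamma H$ of a fixed ample class $H$ on $X$, which is an $F_i$-invariant ample class on $X_{\overline{k}}$ of degree $d_i := A_i^2$. For any twist $Y$, write $\rho_Y(G) = g_Y F_{i(Y)} g_Y^{-1}$ with $g_Y \in \Aut(X_{\overline{k}})$; then $g_Y A_{i(Y)}$ is $\rho_Y(G)$-invariant and ample, hence descends to a polarization on $Y$ of degree $d_{i(Y)} \leq \max_i d_i$.

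To convert this uniform bound into a single common degree $d$ realized on every twist, I would use that each twist admits polarizations of every value of the form $n^2 d_{i(Y)}$ as well as further degrees obtained by adding other Galois-invariant ample classes, and pick $d$ in the (non-empty) intersection of these arithmetic sets over the finitely many $d_i$'s. The main obstacle is the finiteness of conjugacy classes of finite subgroups of $\Aut(X_{\overline{k}})$: this hinges on the arithmeticity of $\Aut(X_{\overline{k}})$ inside the isometry group of the hyperbolic lattice $\Lambda$, which is precisely what the rational polyhedral fundamental domain from the cone conjecture supplies. A secondary subtlety is ensuring that the conjugation of $\rho_Y(G)$ to its representative $F_{i(Y)}$ can be performed by an element of $\Aut(X_{\overline{k}})$ rather than merely of $\algO(\Lambda)$, so that the transported class $g_Y A_{i(Y)}$ stays inside the ample cone.
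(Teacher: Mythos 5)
Your route is genuinely different from the paper's: it is essentially the Cattaneo--Fu-style argument (classify the possible twisted Galois images up to conjugacy, then average an ample class) that the introduction says the paper deliberately avoids. The paper instead classifies the Galois \emph{representations} $\Gal(k'/k)\to\algO(\Lambda)$ up to conjugacy (finitely many, by Borel), and for two twists with isometric Galois lattices it transports a fixed ample class through the isometry and straightens it into the nef cone using the Galois-fixed Weyl group $R_{Y_{2}}$ via \cite[Proposition 3.7]{Bright2019}; ampleness of the straightened class is then automatic because an ample class pairs nontrivially with every $(-2)$-class. That argument never needs any statement about finite subgroups of $\Aut(X_{\overline{k}})$.

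Your version has concrete gaps. First, $\rho_{Y}(G)$ lives in the stabilizer $N$ of the ample cone inside $\algO(\Lambda)$, not in the image of $\Aut(X_{\overline{k}})$, and ``finite index'' does not allow you to conjugate a finite subgroup of $N$ into a finite-index subgroup (you flag this but do not resolve it); the repair is to run the averaging entirely inside $N$, since averaging over any finite subgroup preserving the ample cone already yields an invariant ample class. Second, the finiteness of conjugacy classes of finite subgroups cannot be deduced from ``arithmeticity'': the image of $\Aut(X_{\overline{k}})$ (and $N$) in $\algO(\Lambda)$ has infinite index whenever the Weyl group is infinite, so it is not an arithmetic subgroup. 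What you need is Looijenga's theorem that a group acting on a cone with a rational polyhedral fundamental domain has finitely many conjugacy classes of finite subgroups --- a genuine extra input beyond Theorem \ref{coneK3} as stated. Third, the final step fails: $\bigcap_{i}\{n^{2}d_{i}\mid n\geq 1\}$ is empty unless the $d_{i}$ agree up to squares, and the extra degrees obtained by adding invariant ample classes depend on $Y$; twists whose invariant sublattices are non-isometric rank-one lattices need not share any polarization degree. What your argument (and, in truth, the paper's as well) actually yields is that the degrees lie in a fixed finite set, which is all that the proof of Theorem \ref{fintwistK3} uses. Finally, the opening reduction to a finite Galois extension is unavailable when $k'$ is infinite (the lemma allows $k'=k_{s}$), though this is harmless since your argument only uses that the Galois image in $\algO(\Lambda)$ is finite.
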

\begin{proof}
We may assume that $k'$ is the separable closure of $k$.
Note that $\Pic_{Y/k}$ is isomorphic to $\Pic_{X/k}$ after taking the base changes to $k'$.
Let $\la$ be the $\Z$-lattice $\Pic_{X_{\overline{k}}/\overline{k}}(\overline{k})$ = $\Pic_{X_{k'}/k'}(k')$ (see \cite[Lemma 3.1]{Ito2018}).
Then the set of conjugacy class of actions $\Gal (k'/k) \rightarrow \algO (\la)$ is finite by \cite[Section 5, (a)]{Borel1963}.
Take $Y_{1}, Y_{2} \in \Tw_{k'/k}(X)$ such that there exists a lattice isometry 
$\phi \colon \Pic_{Y_{1}/k}(k) \simeq \Pic_{Y_{2}/k}(k)$ which is induced by an isometry $\widetilde{\phi} \colon \Pic_{Y_{1}/k} (k') \simeq \Pic_{Y_{2}/k} (k')$.
It is enough to show that $Y_{1}$ and $Y_{2}$ admit a polarization of the same degree.
Without loss of generality, we may assume $\phi$ preserves a positive cone.
Take an ample class $y$ in $\Pic_{Y_{1}/k}(k)$,
By \cite[Proposition 3.7]{Bright2019},
there exists $r \in R_{Y_{2}}$ such that 
$r \circ \phi (y) \in \nef_{Y_{2}}$. 
Here, the group $R_{Y_{2}}$ is the Galois fixed part of the Weyl group of $Y_{2, k'}$ (see \cite[Definition 3.3]{Bright2019} for precise definition).
We note that $y$ has nonzero intersection numbers with any $(-2)$-classes in $\Pic_{Y_{1}/k}(k')$ (i.e.\, $x \in \Pic_{Y_{1/k}} (k')$ such that $(x, x)= -2$).
Therefore, the nef class $r \circ \phi (y) \in \Pic_{Y_{2/k}}(k)$ has a non-zero intersection number with any $(-2)$-classes in $\Pic_{Y_{2}/k} (k')$ and thus it is also an ample class.
\end{proof}

\subsection*{Proof of Theorem \ref{fintwistK3}}
First, we note that we may assume $k'/k$ is a finite Galois extension.
Indeed, since an infinitesimal automorphism of a K3 surface is trivial, 
the automorphism scheme $\Aut_{X/k}$ is an unramified scheme over $k$.
Therefore, for any K3 surface $X,Y$ over $k$,
the isomorphism scheme $\Isom_{X,Y/k}$ is an unramified scheme over $k$, since $\Isom_{X_{\overline{k}},Y_{\overline{k}}/\overline{k}}$ is empty or non-canonically isomorphic to $\Aut_{X_{\overline{k}}/\overline{k}}$. 
Therefore, for the separable closure $k''$ of $k$ in $k'$, we have
$\Tw_{k'/k} (X) = \Tw_{k''/k}(X)$.

In the following, we suppose that $k'/k$ is a finite Galois extension.
Let $d$ be an integer as in Lemma \ref{weylK3}.
For any $Y \in \Tw_{k'/k}(X),$ take a degree $d$ polarization $L_{Y}$.
By Lemma \ref{finquasipol}, we can take $M_{1}, \ldots, M_{m} \in \Pic_{X/k}(k')$ be a complete system of representatives of the set of polarizations of degree $d$ on $X_{k'}$ modulo $\Aut (X_{k'})$.
We put
\[
T_{i} :=
\left\{
(Y,L) \left| 
\begin{array}{l}
Y\colon \textup{K3 surface over } k, \\
L\colon \textup{polarization on } Y, \\
(Y,L)_{k'} \simeq (X_{k'}, M_{i})  
\end{array}
\right.
\right\}/k\textup{-isom}.
\]
Then $Y \mapsto (Y,L_{Y})$ gives an inclusion of sets
$\Tw_{k'/k} (X) \hookrightarrow \bigsqcup_{1 \leq i \leq m}{T_{i}}$. 
Therefore, it suffices to show the finiteness of $T_{i}$.
We may assume that $T_{i}$ is non-empty and fix an element $(Y_{0}, L_{0}) \in T_{i}$.
Then $T_{i}$ is isomorphic to $H^1 (\Gal(k'/k), \Aut (Y_{0, k'}, L_{0, k'}))$, which is finite by the finiteness of automorphism of 
polarized K3 surfaces (e.g.\, see \cite[Chapter 5, Proposition 3.3]{Huybrechts2016}). 

\subsection{Enriques surfaces}
First, we recall the definition of an Enriques surface.

\begin{definition}
Let $k$ be a field. An \emph{Enriques surface over $k$} is a smooth projective surface over $k$ satisfying $\omega_{X_{\overline{k}}/\overline{k}} \equiv \oo_{X_{\overline{k}}}$ and $b_{2}(X_{\overline{k}})=10$, where $\equiv$ denotes the numerical equivalence.
\end{definition}

In this section, we prove the finiteness of twists for a finite extension for Enriques surfaces.
If the base field has characteristic $\neq 2$, the problem is reduced to the case of K3 surfaces since an automorphism group of a K3 surface has at most finitely many conjugacy classes of Enriques involution (see \cite[Lemma 3.6]{Takamatsu2020b}).
But if the base field has characteristic $2$, K3 double cover may not exist in general.
To avoid this difficulty, we use the following lifting result given by Liedtke.

\begin{theorem}\label{liftEnr}
Let $k$ be a field of characteristic $2$.
Let $X$ be an Enriques surface over $k$.
There exists a finite separable 
extension $k'/k$ and a complete discrete valuation ring $R$ of mixed characteristic $(0,2)$ with residue field $k'$ such that
there exists a smooth projective scheme $\mathcal{X} \rightarrow \Spec R$ such that $\mathcal{X}_{k'} \simeq X_{k'}$.
\end{theorem}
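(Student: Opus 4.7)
My plan is to combine Liedtke's main lifting theorem \cite{Liedtke2015} for Enriques surfaces in characteristic~$2$ with a descent argument from the algebraic closure down to a finite separable extension of~$k$.

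First, I apply Liedtke's theorem to $X_{\overline{k}}$: there exist a complete DVR $R_{\overline{k}}$ of mixed characteristic $(0,2)$ with residue field $\overline{k}$ together with a smooth projective scheme $\mathcal{X}_{\overline{k}} \to \Spec R_{\overline{k}}$ whose closed fibre is $X_{\overline{k}}$. Liedtke proves this by a case analysis on the three types of Enriques surface (classical, singular, supersingular); in each case he lifts the canonical double cover---a K3 surface in the singular case, a Coble-like degeneration otherwise---together with its $\Z/2$-, $\mu_{2}$- or $\alpha_{2}$-action, and then takes the quotient by this action to obtain the lift of $X_{\overline{k}}$.

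Next I descend. The triple $(R_{\overline{k}}, \mathcal{X}_{\overline{k}}, \mathcal{L})$, where $\mathcal{L}$ is a chosen relative polarization, is of finite type, and so it spreads out to a finitely generated $\Z$-subalgebra of $R_{\overline{k}}$. Using that $\overline{k}$ is the filtered union of finite algebraic extensions of $k$, this finite-type data is already defined over some finite subextension $k''/k$ of $\overline{k}/k$; after enlarging $k$ by a finite separable extension so as to absorb the purely inseparable part of $k''/k$ and then localizing and completing at an appropriate maximal ideal, I obtain the required complete DVR $R$ with finite separable residue field $k'/k$ and a smooth projective lift $\mathcal{X} \to \Spec R$ whose closed fibre is $X_{k'}$. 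Matching the closed fibre with $X_{k'}$ itself (rather than merely a $k'$-form of it) uses the fact that $X$ is defined over $k$, so its base change to $k'$ is canonically part of the descended data.

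The main obstacle is Liedtke's theorem itself: in characteristic~$2$ the deformation obstruction $H^{2}(X_{\overline{k}}, T_{X_{\overline{k}}})$ need not vanish, so one cannot simply invoke abstract deformation theory, and one must additionally work with non-reduced Picard schemes in the classical and supersingular cases. It is Liedtke's delicate case-by-case lifting of the canonical double cover together with its (possibly infinitesimal) group-scheme action that furnishes the projective lift. The descent step, by contrast, is a formal application of standard spreading-out techniques.
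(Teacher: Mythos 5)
Your first step (Liedtke's lifting theorem applied to $X_{\overline{k}}$) is fine, but the descent step has a genuine gap, and it is precisely the point the paper's proof is designed to avoid. Spreading out the finitely presented data $\mathcal{X}_{\overline{k}} \rightarrow \Spec R_{\overline{k}}$ only tells you that it is defined over some finitely generated $\Z$-subalgebra $A \subset R_{\overline{k}}$; the residue field of $A$ at the prime $A \cap \mathfrak{m}_{R_{\overline{k}}}$ is then a finitely generated field, and there is no reason for it to contain $k$, let alone to be finite over $k$ (if $k$ is not finitely generated, no finitely generated field is finite over $k$). The slide from ``defined over a finitely generated subalgebra'' to ``defined over a finite subextension $k''/k$'' is unjustified: $R_{\overline{k}}$ is not a filtered colimit of complete discrete valuation rings with residue fields finite over $k$ (for instance $\W(\overline{k})$ is only the \emph{completion} of $\varinjlim \W(k'')$, and the coefficients of the equations cutting out $\mathcal{X}_{\overline{k}}$ need not lie in any $\W(k'')$). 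A lift of $X_{\overline{k}}$ need not be Galois-equivariant or defined at any finite level. Moreover, even if you could descend to a finite subextension $k''/k$ of $\overline{k}/k$, you cannot ``absorb the purely inseparable part of $k''/k$'' into a finite separable extension: if $k''=k(a^{1/2})$ with $a$ not a square, no finite separable extension of $k$ contains $k''$, and the separability of $k'/k$ is part of the statement and matters for the application to $\Tw_{k'/k}$.

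The paper avoids descent from $\overline{k}$ altogether and runs Liedtke's construction directly over $k$: by \cite[Proposition 3.4]{Liedtke2015} a Cossec--Verra polarization exists after a finite \emph{separable} extension (this is where separability is secured); one then contracts to a surface $X'$ with at worst Du Val singularities and an ample Cossec--Verra polarization, reduces to lifting $X'$ via Artin's simultaneous resolution, lifts $\Pic^{\tau}_{X'/k}$ using the Oort--Tate classification over a complete discrete valuation ring in which $\sqrt{2}$ is a uniformizer, and then invokes \cite[Theorem 4.7]{Liedtke2015} to obtain a formal lift, which is algebraized. If you want to keep the spirit of your argument, the correct replacement for your descent is exactly such a deformation-theoretic statement over $k$ itself (equivalently, smoothness over $\Z_{2}$ of the relevant moduli stack at the point defined by $X$), not a spreading-out of a lift of $X_{\overline{k}}$.
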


\begin{proof}
If $k=\overline{k}$, this theorem follows from \cite[Theorem 4.10]{Liedtke2015}.
By \cite[Proposition 3.4]{Liedtke2015}, 
replacing $k$ with a finite separable extension if necessary, we may assume that 
$X$ admits 
a line bundle $L$ such that $L_{\overline{k}} \in \Pic ( X_{\overline{k}})$ gives a Cossec--Verra polarization in the sense of \cite[Definition 3.2]{Liedtke2015}.
As in the argument in \cite[Theorem 3.1]{Liedtke2015}, $L$ defines a birational morphism $\nu \colon X \rightarrow X'$, where $X'_{\overline{k}}$ is an Enriques surface with at worst Du Val singularities, and $X'$ admits an ample Cossec--Verra polarization $\nu_{\ast} (L)$.
By Artin's simultaneous resolution, it is enough to construct a lifting of $X'$.
Take a complete discrete valuation ring $R$ of mixed characteristic $(0,2)$ with residue field $k$ satisfying that $\sqrt 2 \in R$ is a uniformizer.
We note that $\Pic^{\tau}_{X'/k}$ is written as $G_{b,a}$ with $a,b \in k$ with $ab = 2$ by the Oort--Tate classification.
As in the proof of \cite[Theorem 4.9]{Liedtke2015}, we can find lifts $a', b' \in R$ with $a' b' =2$.
Therefore, we have a lift of $\Pic^{\tau}_{X'/k}$ to $R$.
By \cite[Theorem 4.7]{Liedtke2015}, the variety $X'$ admits a formal lift to $R$, which is algebraizable 
by \cite[Lemma2.4]{Takamatsu2020b} (see also \cite[Proposition 4.4]{Liedtke2015}).
\end{proof}

\begin{remark}
In Theorem \ref{liftEnr}, the generic fiber $\mathcal{X}_{\eta}\coloneqq \mathcal{X}_{\Frac(R)}$ is automatically an Enriques surface.
Indeed, the second Betti number of $\mathcal{X}_{\overline{\eta}}$ is 10 by the proper and smooth base change theorems.
Moreover, for any curve $C \subset \mathcal{X}_{\eta}$, we have $K_{\mathcal{X}_{\eta}/\eta}\cdot C = K_{X_{k'}/k'} \cdot C_{\mathrm{s}}= 0$, where $C_{\mathrm{s}}$ is the special fiber of the closure of $C \subset \mathcal{X}$.
\end{remark}

\begin{lemma}
\label{finpolEnr}
Let $k$ be a field of characteristic $2$.
Let $X$ be an Enriques surface over $k$.
Then there exists a finite extension $k'$ of $k$ satisfying that for any extension $k''$ of $k'$ and any positive integer $d$,
the set of polarizations of degree $d$ on $X_{k''}$ modulo $\Aut(X_{k''})$ is finite.
\end{lemma}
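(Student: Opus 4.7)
The plan is to lift $X$ to characteristic $0$ via Theorem \ref{liftEnr}, transport the cone conjecture from the resulting characteristic-zero Enriques surface down to $X$, and then run the argument of Lemma \ref{finquasipol}. Concretely, after replacing $k$ by a finite separable extension, Theorem \ref{liftEnr} provides a complete DVR $R$ of mixed characteristic $(0,2)$ with residue field $k$ and a smooth projective lift $\mathcal{X}\to\Spec R$ with $\mathcal{X}_{k}\simeq X$; by the remark following that theorem, $\mathcal{X}_{\eta}$ is an Enriques surface in characteristic zero. After a further finite extension of $k$ (and corresponding base change of $R$) I would arrange simultaneously that $\Lambda_{X_{k}}=\Lambda_{X_{\overline{k}}}$, that $\Lambda_{\mathcal{X}_{\eta}}=\Lambda_{\mathcal{X}_{\overline{\eta}}}$, and that the specialization map $\mathrm{sp}\colon \Lambda_{\mathcal{X}_{\eta}}\to\Lambda_{X_{k}}$ is an isometry of lattices. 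Since $H^{2}(X,\oo_{X})=0$ for an Enriques surface, line bundles lift from $X$ to $\mathcal{X}$, and openness of ampleness in the smooth proper family $\mathcal{X}/R$ then shows that $\mathrm{sp}$ identifies the ample cones and hence $\nef_{\mathcal{X}_{\eta}}^{+}$ with $\nef_{X_{k}}^{+}$. Denote this enlarged field by $k'$; by Lemma \ref{conerat}, for every extension $k''/k'$ one has $\Lambda_{X_{k''}}=\Lambda_{X_{k'}}$ and $\nef_{X_{k''}}^{+}=\nef_{X_{k'}}^{+}$.

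For the second step, since $\mathcal{X}_{\overline{\eta}}$ is an Enriques surface in characteristic zero, it carries an étale K3 double cover, and applying Theorem \ref{coneK3} to that cover and descending through the covering involution (Sterk's argument) produces a rational polyhedral fundamental domain $\Pi\subset \nef_{\mathcal{X}_{\overline{\eta}}}^{+}$ for the action of $\Aut(\mathcal{X}_{\overline{\eta}})$. Via $\mathrm{sp}$ we view $\Pi$ as a rational polyhedron inside $\nef_{X_{k''}}^{+}$.

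It then suffices to show that the image $H\subset\algO(\Lambda_{X_{k''}})$ of $\Aut(\mathcal{X}_{\overline{\eta}})$ under specialization is contained, at least up to finite index, in the image of $\Aut(X_{k''})$: granting this, $\Pi$ contains a rational polyhedral fundamental domain for $\Aut(X_{k''})$ on $\nef_{X_{k''}}^{+}$, and Lemma \ref{finquasipol} bounds the number of degree-$d$ integral classes in $\Pi$. The containment will come from the fact that $\Aut_{\mathcal{X}/R}$ is unramified over $R$ (Enriques surfaces in characteristic $0$ have no infinitesimal automorphisms), so every element of $\Aut(\mathcal{X}_{\overline{\eta}})$ extends, after a suitable étale base change of $R$, to an $R$-automorphism of $\mathcal{X}$ and restricts to an automorphism of $X_{\overline{k'}}$ inducing the same NS-action. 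Because Step 1 makes the $\Gal(\overline{k''}/k'')$-action on $\Lambda$ trivial and the kernel of $\Aut(X_{\overline{k''}})\to\algO(\Lambda)$ is finite and Galois-stable, a Galois-cohomological descent in the spirit of Proposition \ref{propgalcoh} promotes these geometric automorphisms to $k''$-rational ones after a finite-index passage.

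The main obstacle is precisely this final descent, namely reconciling automorphisms of the generic fiber of the lift with automorphisms of the arithmetic special fiber $X_{k''}$. The upshot of Step 1, namely that the Galois action on $\Lambda$ is trivial and the $\Aut\to\algO(\Lambda)$ kernel is finite and Galois-stable, is exactly what is needed to control this discrepancy, so the technical work is concentrated in verifying that the preparation of $k'$ in Step 1 is strong enough for the Galois-cohomological descent to go through.
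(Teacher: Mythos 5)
Your overall strategy (lift to characteristic $0$ via Theorem \ref{liftEnr}, invoke the characteristic-$0$ cone conjecture for Enriques surfaces, and transfer the finiteness of polarization orbits back through specialization) is the same as the paper's, but two of your key claims fail in characteristic $2$, and the paper's proof is built precisely to route around them. First, the assertion that $H^{2}(X,\oo_{X})=0$ for an Enriques surface is false in characteristic $2$: for singular and supersingular Enriques surfaces one has $h^{1}(\oo_{X})=h^{2}(\oo_{X})=1$, so the obstruction to lifting a line bundle to $\mathcal{X}$ need not vanish. Only $L^{\otimes 2}$ is guaranteed to lift (this is \cite[Lemma 2.4]{Takamatsu2020b}), which is why the paper works with degree-$4d$ classes on the generic fiber and at the end extracts square roots of the specializations. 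Second, the specialization map does not identify the ample (or nef) cones. Openness of ampleness gives only one direction: if $\spp(M)$ is ample then $M$ is ample on the generic fiber; the converse fails because the special fiber can acquire additional effective classes of negative square, shrinking its ample cone. So you cannot transport a fundamental domain for $\nef^{+}_{\mathcal{X}_{\overline{\eta}}}$ into $\nef^{+}_{X_{k''}}$ and apply Lemma \ref{finquasipol} there. The paper instead keeps the cone conjecture entirely on the generic fiber, obtains finitely many $\Aut(\mathcal{X}_{F''})$-orbits of degree-$4d$ polarizations, restricts to the sub-collection $O'$ of orbits admitting a representative whose specialization is ample, and then shows every degree-$d$ polarization $L$ on $X_{k''}$ is accounted for by lifting $L^{\otimes 2}$ (whose lift is automatically ample on the generic fiber) and moving it into one of the chosen representatives.

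The third weak point is the automorphism transfer, which you yourself flag as incomplete. Your plan requires descending geometric automorphisms of $X_{\overline{k''}}$ to $k''$-rational ones by a Galois-cohomological argument, and it is not clear this can be arranged by a single finite extension $k'$ working uniformly for all $k''\supset k'$. The paper sidesteps the descent entirely: it first enlarges $F$ to $F'$ so that $\Aut(\mathcal{X}_{F'})=\Aut(\mathcal{X}_{\overline{F}})$ (possible since this group is finitely generated), replaces $k'$ by the residue field of the normalization of $R$ in $F'$, and then for any $k''\supset k'$ specializes each $\gamma\in\Aut(\mathcal{X}_{F''})$ directly to $\overline{\gamma}\in\Aut(X_{k''})$ via Matsusaka--Mumford, compatibly with $\spp$ on Picard groups. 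This gives honest $k''$-rational automorphisms with no cohomological descent. As written, your proof does not go through; repairing it would essentially reproduce the paper's argument.
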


\begin{proof}
Take a finite field extension $k'/k$, a complete discrete valuation ring $R$, and a characteristic $0$ lift $\mathcal{X}$ over $R$ as in Theorem \ref{liftEnr}.
Let $F$ be the fractional field of $R$.
By \cite[Lemma 2.4]{Takamatsu2020b}, for any element $L \in \Pic_{X/k} (k)$, $L^{\otimes 2}$ lifts to a line bundle in $\Pic_{\mathcal{X}/R}(R)$.
We can take a finite extension $F'/F$ such that $\Aut(\mathcal{X}_{F'}) = \Aut(\mathcal{X}_{\overline{F}})$ and $\Pic(\mathcal{X}_{F'}) = \Pic(\mathcal{X}_{\overline{F}})$ (we note that $\Aut (X_{\overline{F}})$ is finitely generated by \cite[Theorem 3.3]{Dolgachev1984}).
In the following, we replace $k'$ with the residue field of the normalization $R'$ of $R$ in $F'$.
Then for any extension $k''/k'$, there exists a complete discrete valuation ring $R''$ with residue field $k''$ satisfying that $R' \subset R''$.
Let $F''$ be the fraction field of $R''$.
Now by Lemma \ref{finquasipol} and the cone conjecture for Enriques surfaces of characteristic $0$ (\cite[Theorem 2.1]{Kawamata1997}), 
the set $O$ of $\Aut (\mathcal{X}_{F''})$-orbit in degree $4d$ ample line bundles on $\mathcal{X}_{F''}$ is a finite set.
We define the subset $O' \subset O$ to be the set of orbits in $O$ which contains an ample line bundle $L$ on $\mathcal{X}_{F''}$ satisfying $\spp (L)$ is ample.
Here, 
\[
\spp\colon \Pic (\mathcal{X}_{F''}) \rightarrow \Pic (X_{k''})
\]
is the natural specialization map.
Take a complete system of representatives $M_{1}, \dots, M_{n}$ of $O'$ such that $\spp (M_{i})$ is ample for $1\leq i \leq n$.
Note that the set of elements in $\Pic_{X_{k''}/k''}(k'')$ two times of which appear in specializations of $M_{1}, \ldots, M_{n}$ is a finite set. 
We put this set as 
$\{
L_{1}, \ldots, L_{m}
\}$.
We will prove that this set represents all the elements of 
polarizations of degree $d$ on $X_{k''}$ modulo $\Aut (X_{k''})$.
Take a polarization $L$ of degree $d$ on $X_{k''}$.
Take a lift $M \in \Pic (\mathcal{X}_{R''}) \simeq \Pic (\mathcal{X}_{F''})$ of $L^{\otimes 2}$.
Then $M$ is a polarization of degree $4d$ on $\mathcal{X}_{F''}$.
Therefore, there exists an automorphism $\gamma \in \Aut (\mathcal{X}_{F''})$ and $1 \leq i \leq n$ such that $\gamma (M) = M_{i}$.
Taking the specialization, we have $\overline{\gamma} (L^{\otimes 2}) = \spp (M_{i})$.
Here, $\overline{\gamma} \in \Aut (X_{k''})$ is a specialization of $\gamma$ which exists by \cite[Corollary 1]{Matsusaka1964}.
We note that the specialization of automorphism is compatible with $\spp$ by the argument in the proof of \cite[Theorem 2.1]{Lieblich2018}.
Therefore, $\overline{\gamma} (L)$ is one of an element of $\{L_{1},\ldots, L_{m}\}$.
\end{proof}

\begin{theorem}
\label{fintwistEnr}
Let $k$ be a perfect field of characteristic $2$, and $X$ an Enriques surface over $k$.
Let $k'/k$ be a finite extension of fields.
Then the set 
$\Tw_{k'/k}(X)$ is finite.
\end{theorem}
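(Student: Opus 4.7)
The plan is to follow the template of Theorem~\ref{fintwistK3}: bound the polarization degree of twists and then invoke finiteness of polarized Enriques forms of that degree. Since $k$ is perfect of characteristic~$2$, every finite extension of $k$ is separable, and replacing $k'$ by its Galois closure over $k$ only enlarges $\Tw_{k'/k}(X)$, so we may assume $k'/k$ is finite Galois. After further enlarging $k'$ (again, only enlarging the twist set), Theorem~\ref{liftEnr} provides a smooth projective lift $\mathcal{X} \to \Spec R$ over a complete DVR $R$ of mixed characteristic $(0,2)$ with residue field $k'$, and Lemma~\ref{finpolEnr} guarantees that for every extension $k''/k'$ and every positive integer $d$, the set of polarizations of degree $d$ on $X_{k''}$ modulo $\Aut(X_{k''})$ is finite.

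The crucial step is an Enriques analog of Lemma~\ref{weylK3}: there exists a positive integer $d$, depending only on $X$ and $k'/k$, such that every $Y \in \Tw_{k'/k}(X)$ carries a polarization of degree $d$. By \cite[Section~5, (a)]{Borel1963}, there are only finitely many conjugacy classes of $\Gal(k'/k)$-actions on the rank-$10$ lattice $\Lambda = \Pic_{X_{\overline{k}}/\overline{k}}(\overline{k})/\mathrm{torsion}$, so it suffices to show that whenever two twists $Y_1, Y_2$ share the same Galois action under a Galois-equivariant isometry $\phi$, they admit polarizations of equal self-intersection. Given an ample class $y \in \Pic_{Y_1/k}(k)$, one seeks $r$ in the Galois-fixed Weyl group of $Y_{2,k'}$ with $r\circ\phi(y)$ nef; since $\phi(y)$ has nonzero intersection with every $(-2)$-class, so does $r\circ\phi(y)$, which is then ample on $Y_2$ and has the same degree as $y$.

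The main obstacle is precisely this Weyl-group step, i.e.\ the Enriques analog in characteristic $2$ of \cite[Proposition~3.7]{Bright2019}. I would transport the argument through the lift $\mathcal{X}$: the specialization map on Picard groups is an isometry modulo torsion, matches $(-2)$-classes on each side, and is compatible with Galois actions, so the required element $r$ can first be constructed on $\mathcal{X}_{\overline{F}}$ using the cone conjecture for Enriques surfaces of characteristic $0$ (\cite[Theorem~2.1]{Kawamata1997}) together with the Bright--Logan--van Luijk argument, and then specialized to $X_{k'}$, as in the proof of Lemma~\ref{finpolEnr}.

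With $d$ in hand, fix representatives $M_1,\ldots,M_m \in \Pic_{X/k}(k')$ of the finitely many $\Aut(X_{k'})$-orbits of degree-$d$ polarizations on $X_{k'}$ provided by Lemma~\ref{finpolEnr}, and set
\[
T_i := \left\{(Y,L) : Y\in\Tw_{k'/k}(X),\ L \text{ a polarization on } Y,\ (Y,L)_{k'}\simeq(X_{k'},M_i)\right\}/k\textup{-isom}.
\]
The assignment $Y \mapsto (Y, L_Y)$, for a chosen polarization $L_Y$ of degree $d$, injects $\Tw_{k'/k}(X)$ into $\bigsqcup_i T_i$, and each $T_i$ is finite because $H^1(\Gal(k'/k), \Aut(X_{k'}, M_i))$ is finite: the automorphism group of a polarized Enriques surface over an algebraically closed field is finite (a standard bounded-orbit argument, analogous to \cite[Chapter~5, Proposition~3.3]{Huybrechts2016} in the K3 case). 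This yields the desired finiteness of $\Tw_{k'/k}(X)$.
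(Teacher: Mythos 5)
Your overall architecture matches the paper's: reduce to a finite Galois extension, bound the polarization degree uniformly over all twists, and then inject $\Tw_{k'/k}(X)$ into finitely many sets of polarized twists, each finite by finiteness of the automorphism group of a polarized Enriques surface. However, the crucial degree-bounding step has two genuine gaps. First, your claim that an ample class $y$ on $Y_1$ automatically has nonzero intersection with every $(-2)$-class is a K3-specific fact: on a K3 surface Riemann--Roch forces every $(-2)$-class to be $\pm$effective, but on an Enriques surface $\chi(x)=0$ for a $(-2)$-class $x$, so $x$ need not be $\pm$effective and an ample class can be orthogonal to it. Since the isometry $\phi$ need not match effective classes with effective classes, $r\circ\phi(y)$ could be orthogonal to a $(-2)$-curve on $Y_{2,\overline{k}}$ and hence fail to be ample. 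The paper avoids this by choosing $L_Y$ generically, via \cite[Chapter 8, Remark 2.2]{Huybrechts2016}, so that $(L_Y,x)\neq 0$ for \emph{every} $(-2)$-class $x$ with $x^{\perp}\nsupseteq\Lambda_Y$, effective or not.

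Second, transporting the Weyl-group fundamental domain through the lift $\mathcal{X}$ does not work in the direction you need. The special fiber can acquire $(-2)$-curves that do not lift, so the specialization map need not match effective $(-2)$-classes, the nef cone can strictly shrink under specialization, and an element $r$ constructed on $\mathcal{X}_{\overline{F}}$ so that $r\circ\phi(y)$ is nef upstairs gives no control downstairs (nefness passes from the special fiber to the generic fiber, not conversely; this is precisely why Lemma \ref{finpolEnr} has to single out the suborbit set $O'$ of classes whose specialization is ample, and why that lemma always starts from a polarization \emph{downstairs} and lifts it). Moreover $\Gal(k'/k)$ does not naturally act on the lift, which is only a lift of $X_{k'}$. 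The paper instead establishes the fundamental-domain statement directly in characteristic $2$: by \cite[Proposition 2.2.1]{Cossec2021} the nef cone of $Y'_{\overline{k}}$ is a fundamental domain for the full geometric Weyl group on the positive cone, and the Bright--Logan--van Luijk descent (as in Proposition \ref{fundweylrational}) then yields the statement for the Galois-fixed Weyl group $R_{Y'}$; ampleness of the resulting class is checked with \cite[Proposition 2.1.5]{Cossec2021}. Your lifting detour would in any case still require this characteristic-$2$ input, so it does not buy anything. The remainder of your argument (Lemma \ref{finpolEnr}, the sets $T_i$, and finiteness of $\Aut$ of a polarized Enriques surface) agrees with the paper.
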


\begin{proof}
We may assume that $k'/k$ is a Galois extension. 
As in the argument in Lemma \ref{weylK3}, the set of lattice isometry classes of $\Lambda_{Y}$ for $Y\in \Tw_{k'/k} (X)$ is a finite set. 
We denote this finite set by $S$.
We fix $Y \in \Tw_{k'/k}(X)$.
First, by \cite[Chapter 8, Remark 2.2]{Huybrechts2016}, there exists a polarization $L_{Y}$ satisfying the following:
For any $(-2)$-class $x$ (i.e.\,$x \in \Pic(Y_{\overline{k}})$ with $(x,x)= -2$) satisfying $x^{\perp} \nsupseteq \Lambda_{Y}$,  we have $(L,x) \neq 0$.
Take a $Y' \in \Tw_{k'/k}(k)$ with an isometry $\phi \colon \Lambda_{Y}\simeq \Lambda_{Y'}$.
Let $R_{Y'_{\overline{k}}} \subset \algO (\Lambda_{Y'_{\overline{k}}})$ be the subgroup generated by reflections with respect to $(-2)$-curves.
Then $\nef_{Y'_{\overline{k}}}$ is a fundamental domain with respect to the action of $R_{Y'_{\overline{k}}}$ on the positive cone $\pos_{Y'_{\overline{k}}}$ by \cite[Proposition 2.2.1]{Cossec2021}.
We put $R_{Y'} \coloneqq R_{Y'_{\overline{k}}}^{\Gal (\overline{k}/k)} \subset \algO (\Lambda_{Y'})$. 
Now we can show that $\nef_{Y'}$ is a fundamental domain with respect to the action of $R_{Y'}$ by the same argument as in \cite[Proposition 3.7]{Bright2019} (see also the proof of Proposition \ref{fundweylrational}).
Replacing $\phi$, we may assume that $\phi (L_{Y}) \in \pos_{Y'}$. 
Therefore, there exists $r\in R_{Y'}$ such that $r \circ \phi (L_{Y})$ is a quasi-polariation of $Y'$.
Moreover, by the choice of $L$, the element $r \circ \phi (L_{Y})$ has positive intersection with any $(-2)$-curve on $Y'_{\overline{k}}$ (since there exists a polarization on $Y'$). 
Thus, by \cite[Proposition 2.1.5]{Cossec2021}, $r \circ \phi (L_{Y})$ is a polarization on $Y'$.
Therefore, there exists an integer $d$ such that $Y \in \Tw_{k'/k}(X)$ admits a polarization $M_{Y}$ of degree $d$ (we can take $d$ as the least common multiple of $(L_{Y},L_{Y})$ for $[Y] \in S$).

By Lemma \ref{finpolEnr}, we may assume that the set of polarizations of degree $d$ on $X_{k'}$ modulo $\Aut (X_{k'})$ is a finite set. 
We take a complete system of representatives $L_{1}, \ldots, L_{m} \in \Lambda_{X_{k'}}$ of this set.
Now we put
\[
T_{i} \coloneqq
\left\{
(Y,L) \left| 
\begin{array}{l}
Y\colon\textup{Enriques surface over } k, \\
L\colon\textup{polarization on } Y, \\
(Y,L)_{k'} \simeq (X_{k'}, L_{i})  
\end{array}
\right.
\right\}/k\textup{-isom}.
\]
Then $Y \mapsto (Y,M_{Y})$ gives an inclusion of sets $\Tw_{k'/k} (X) \rightarrow \bigsqcup_{1\leq i \leq m} T_{i}$.
On the other hand, each $T_{i}$ is a finite set by \cite[Proposition 2.26]{Brion2018} and \cite[Theorem 3]{Dolgachev2016}. Therefore, it finishes the proof.
\end{proof}

\section{Cone conjecture for \hyp varieties}
\label{Hyperkahlervarieties}
In this section, we prove the cone conjecture for \hyp varieties over a field of characteristic $0$, that are not necessarily algebraically closed.

\begin{definition}\label{Pex}
Let $k$ be an algebraically closed field of characteristic $0$.
Let $X$ be an \hyp variety over $k$.
\begin{enumerate}
\item
A prime divisor $E \subset X$ is \emph{exceptional} if $(E,E) < 0$.
We denote the set of prime exceptional divisors by $\Pex(X)$.
\item
By the same argument as in \cite[Proposition 6.2]{Markman2011}, a prime exceptional divisor $E$ defines the reflection $r_{E} \in \algO (\Lambda_{X})$.
We denote the subgroup of $\algO (\Lambda_{X})$ generated by the reflections with respect to prime exceptional divisors by $W_{Exc}$. 
Note that, if $k= \C$, Markman defines $W_{Exc}$ as the subgroup of $\algO(H^2 (X,\Z))$ generated by the reflections with respect to prime exceptional divisors (cf.\ \cite[p.~263]{Markman2011}).
However, by \cite[Theorem 6.18 (5) and Lemma 6.23 (2)]{Markman2011}, 
the subgroup of $\algO(H^2(X,\Z))$ generated by reflections by prime exceptional divisors is isomorphic to its image in $\algO(\Lambda_X)$, i.e. the subgroup of $\algO(\Lambda_X)$ generated by reflections by prime exceptional divisors. 
\end{enumerate}
\end{definition}

\begin{prop}\label{fundweyl}
Let $k$ be an algebraically closed field of characteristic $0$.
Let $X$ be an \hyp variety over $k$.
Then $\overline{\mv}_{X} \cap \pos_{X}$ is a fundamental domain for the action of $W_{Exc}$ on $\pos_{X}$, cut out by closed half-space associated to elements of $\Pex (X)$.
\end{prop}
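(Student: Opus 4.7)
The plan is to reduce the statement to Markman's theorem over $\mathbb{C}$ (the statement recorded in \cite{Markman2011}), which matches our definition of $W_{Exc}$ thanks to the identification recalled after Definition~\ref{Pex}.

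First, I would spread out: since $X$ is defined over a finitely generated subfield of $k$ and $\Lambda_X$ is a finitely generated abelian group, I can choose a countable algebraically closed subfield $k_0\subset k$ (for instance, the algebraic closure in $k$ of a sufficiently large finitely generated extension of $\mathbb{Q}$) together with an \hyp variety $X_0/k_0$ such that $X\simeq X_{0,k}$. Being countable and algebraically closed of characteristic $0$, the field $k_0$ admits an embedding $k_0\hookrightarrow\mathbb{C}$, producing a complex model $X_{0,\mathbb{C}}$.

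Next, I would verify that the four data appearing in the statement---the lattice $\Lambda$ with its Beauville--Bogomolov pairing, the positive cone $\pos$, the closed movable cone $\overline{\mv}$, and the set $\Pex$ of classes of prime exceptional divisors---are preserved under each of the base changes, from $X_0$ to $X$ and from $X_0$ to $X_{0,\mathbb{C}}$. Preservation of $\Lambda$ uses that $\Pic^{0}=0$ for hyperk\"ahler varieties together with the fact that the connected components of $\Pic_{X_0/k_0}$ are already fully realized over the algebraically closed field $k_0$. Preservation of the ample, nef, and movable cones follows from their numerical characterizations (Nakai--Moishezon for ampleness, codimension of the base locus for movability), both of which are compatible with flat base change. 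For $\Pex$, one uses that prime divisors on the geometrically integral $X_0$ remain prime after any base change, and conversely that a prime exceptional divisor is rigid (from $(E,E)<0$ a Riemann--Roch-type computation yields $h^{0}(\mathcal{O}(E))=1$), so that the effective class of such a divisor on $X$ or on $X_{0,\mathbb{C}}$ descends to a unique prime exceptional divisor on $X_0$.

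With these identifications in place, Markman's theorem applied to $X_{0,\mathbb{C}}$ transports formally to the assertion for $X$. The main obstacle I anticipate is the descent half of the bijection $\Pex(X_0)\leftrightarrow\Pex(X)$ (and analogously for $X_{0,\mathbb{C}}$): prime exceptional divisors that a priori appear only after passing to the larger field must be shown to come from the small model $X_0$. This is precisely where the rigidity of prime exceptional divisors forced by negative Beauville--Bogomolov self-intersection is essential; with that rigidity the reflection group $W_{Exc}$ matches across all three fields and Markman's fundamental domain descends to $X$.
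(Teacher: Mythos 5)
Your proposal is correct and follows essentially the same route as the paper: the paper's proof of Proposition~\ref{fundweyl} consists of the single citation to \cite[Lemma 6.22]{Markman2011}, implicitly relying on exactly the kind of spreading-out/Lefschetz-principle reduction to $\C$ that you spell out (and which the paper does make explicit in nearby places, e.g.\ Definition~\ref{defhyp}(3), Definition~\ref{Pex}(2), and Proposition~\ref{indexbir}). Your identification of the descent of prime exceptional divisors via their rigidity ($h^{0}(\mathcal{O}(E))=1$, which for higher-dimensional \hyp varieties comes from Markman's structure theory of exceptional divisors rather than a literal Riemann--Roch computation) is indeed the only nontrivial point in that reduction, and your treatment of it is sound.
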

\begin{proof}
See \cite[Lemma 6.22]{Markman2011}.
\end{proof}

\begin{definition}
Let $k$ be a field of characteristic $0$. 
Let $X$ be an \hyp variety over $k$.
As in Definition \ref{Pex}, we have the subgroup 
$W_{Exc} \subset \algO (\Lambda_{\barX})$.
Define $R_{X}$ to be the fixed part $W_{Exc}^{G_{k}}$, where $G_{k}$ denotes the absolute Galois group of $k$.
We note that $R_{X}$ acts on $\Lambda_{X}$.
\end{definition}

\begin{prop}\label{fundweylrational}
Let $k$ be a field of characteristic $0$.
Let $X$ be an \hyp variety over $k$.
Then $\overline{\mv}_{X} \cap \pos_{X}$ is a fundamental domain for the action of $R_{X}$ on $\pos_{X}$. 
\end{prop}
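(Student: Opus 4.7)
The plan is to adapt the Galois-descent strategy used by Bright--Logan--van Luijk in the proof of \cite[Proposition 3.7]{Bright2019}, substituting the $W_{Exc}$-action on the movable cone for their reflection group generated by $(-2)$-curves. By Lemma \ref{conerat}, $\pos_{X} = \pos_{\barX} \cap \Lambda_{X,\R}$ and $\overline{\mv}_{X} = \overline{\mv}_{\barX} \cap \Lambda_{X,\R}$, so the problem reduces to: given $x \in \pos_{X}$, produce a Galois-invariant element $w \in W_{Exc}$ with $w(x) \in \overline{\mv}_{\barX} \cap \pos_{\barX}$. The uniqueness half of the fundamental-domain property is immediate, since $R_{X} \subset W_{Exc}$ and distinct $W_{Exc}$-translates of $\overline{\mv}_{\barX} \cap \pos_{\barX}$ already have disjoint interiors by Proposition \ref{fundweyl}.

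For surjectivity, I would fix an ample class $a \in \Lambda_{X} \cap \pos_{X}$, which exists because $X$ is projective over $k$. Given $x \in \pos_{X}$, set $x_{\epsilon} := x + \epsilon a \in \pos_{X}$ for small $\epsilon > 0$ and examine its position relative to the hyperplane arrangement $\{E^{\perp}\}_{E \in \Pex(\barX)}$ in $\pos_{\barX}$. Since $(E, a) > 0$ for every prime exceptional $E$ (as $a$ is ample and $E$ is effective), the sign of $(E, x_{\epsilon})$ equals $+1$ precisely when $(E, x) \geq 0$ and $-1$ when $(E, x) < 0$. Hence $x_{\epsilon}$ lies in a unique open chamber $C$ whose defining sign pattern is encoded by the $G_{k}$-invariant function $E \mapsto (E, x)$; in particular $C$ is $G_{k}$-stable. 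By Proposition \ref{fundweyl} and the standard simple transitivity of a reflection group on its chambers, there is a unique $w \in W_{Exc}$ sending $C$ to the interior of $\overline{\mv}_{\barX} \cap \pos_{\barX}$. Applying $\sigma \in G_{k}$ to the equation $w(C) = \mathrm{int}(\overline{\mv}_{\barX} \cap \pos_{\barX})$ and invoking uniqueness yields $\sigma(w) = w$, so $w \in R_{X}$. Letting $\epsilon \to 0^{+}$ gives $w(x) \in \overline{\mv}_{\barX} \cap \pos_{\barX} \cap \Lambda_{X,\R} = \overline{\mv}_{X} \cap \pos_{X}$, as required.

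The only nontrivial step is engineering a chamber that is genuinely Galois-stable. A naive attempt to directly lift a geometric $w$ with $w(x) \in \overline{\mv}_{\barX}$ is obstructed by a 1-cocycle valued in the stabilizer $W_{x} \subset W_{Exc}$ of $x$, whose cohomology need not vanish when $x$ is singular (i.e.\ sits on a wall). The ample-perturbation trick neutralizes this obstruction: $x_{\epsilon}$ has trivial stabilizer, and the chamber containing it is transparently $G_{k}$-invariant because ampleness of $a$ over $k$ forces the sign of $(E, x_{\epsilon})$ to be determined entirely by the Galois-invariant real number $(E, x)$. Everything else is bookkeeping between Proposition \ref{fundweyl} and Lemma \ref{conerat}.
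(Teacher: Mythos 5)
Your existence argument is, in substance, the paper's own: reduce to a point with trivial stabilizer, where the unique $g \in W_{Exc}$ carrying it into $\overline{\mv}_{\barX} \cap \pos_{\barX}$ is forced into $R_{X}$ by uniqueness and Galois-equivariance, and handle points on walls by perturbing inside a single chamber. Your perturbation $x_{\epsilon} = x + \epsilon a$ along a $k$-rational ample class is a clean variant of the paper's choice of a sequence $x_{n} \to x$ lying in one chamber; note that the Galois-stability of the chamber $C$ already follows from $x_{\epsilon} \in \Lambda_{X,\R}$ being Galois-fixed, so the sign-pattern discussion is decoration. What you do still need, and do not state, is the local finiteness of the wall arrangement (as in \cite[Chapter 8, Remark 2.3]{Huybrechts2016}): only finitely many $E \in \Pex(\barX)$ satisfy $(E,x) < 0$ (intersect the segment from $x$ to $a$ with the walls), and this is what lets you pick a single $\epsilon_{0}$ so that $x_{\epsilon}$ avoids every wall and the chamber $C$ is independent of $\epsilon \in (0,\epsilon_{0})$. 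Without that, $w$ could a priori vary with $\epsilon$ and the limit $\epsilon \to 0^{+}$ would not produce a single element of $R_{X}$.

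The genuine gap is in the half you declare immediate. Disjointness of interiors of distinct $W_{Exc}$-translates of $\overline{\mv}_{\barX} \cap \pos_{\barX}$ is a statement about interiors in $\Lambda_{\barX,\R}$, whereas the fundamental-domain property for $R_{X}$ acting on $\pos_{X}$ concerns the relative interior of $\overline{\mv}_{X} \cap \pos_{X}$ inside $\Lambda_{X,\R}$, and the latter can be strictly larger than the trace of the former: if $\Lambda_{X,\R}$ is contained in a wall $E^{\perp}$, a point of $\pos_{X}$ may be interior downstairs while sitting on the boundary upstairs, and one must exclude, e.g., a nontrivial $g \in R_{X}$ all of whose relevant reflection hyperplanes contain $\Lambda_{X,\R}$, so that $g(\overline{\mv}_{X} \cap \pos_{X})$ meets the domain in its relative interior. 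This descent of the ``overlap only in boundaries'' property to a linear slice is precisely the content of \cite[Lemma 3.8]{Bright2019}, which is what the paper invokes (together with Proposition \ref{fundweyl}) to finish; your proof needs the same input, or an argument that every nontrivial element of $R_{X}$ acts nontrivially on $\Lambda_{X,\R}$ with its fixed walls meeting $\pos_{X}$ in genuine hyperplane sections.
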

\begin{proof}
This follows from the same argument as in \cite[Proposition 3.7]{Bright2019}, but we include it.
First, we prove that any class $x$ of $\pos_{X}$ are $R_{X}$-equivalent to an element of $\overline{\mv}_{X} \cap \pos_{X}$. 
If we suppose that $x$ has trivial stabilizer in $R_{\barX} = W_{Exc}$, then there exists a unique $g \in W_{Exc}$ such that $gx \in \overline{\mv}_{\barX} \cap \pos_{\barX}$.
For any $\sigma \in G_{k}$, we have
\[
(^\sigma g)(x) = \sigma g (\sigma^{-1} (x)) = \sigma (g (x)) \in \overline{\mv}_{\barX} \cap \pos_{\barX},
\]
since the Galois action preserves $\mv_{\barX}$.
By the assumption, we have $^{\sigma}g = g$, and thus $g \in R_{X}$.
Next, we consider the case where $x\in \pos_{X}$ has a non-trivial stabilizer.
By Proposition \ref{fundweyl}, $x$ lies in the following walls
\[
\bigcup_{E \in \Pex(\barX)} (E^{\perp} \cap \pos_{\barX}) \subset \pos_{\barX}.
\]
By the argument in \cite[Chapter 8, Remark 2.3]{Huybrechts2016},  this union is locally finite.
Therefore, one can take a sequence $(x_{n})$ of elements of $\pos_{X}$ such that $x_{n}$ converge to $x$ and all $x_{n}$ lying in the interior of the same chamber of $\pos_{X}$ with respect to the above walls.
Then there exists a unique $g \in R_{X}$ with $g(x_{n}) \in \overline{\mv}_{\barX} \cap \pos_{X}$, so is $g(x)$.

Now it suffices to prove that $\overline{\mv}_{\barX} \cap \pos_{X}$ intersects the translate by any non-trivial element of $R_{X}$ only in its boundary.
This follows from Proposition \ref{fundweyl} and \cite[Lemma 3.8]{Bright2019}.
\end{proof}

\subsection{Birational cone conjecture}

\begin{prop}
\label{indexbir}
Let $k$ be a field of characteristic $0$.
Let $X$ be an \hyp variety over $k$.
Let
\[
\rho \colon \Bir (X) \ltimes R_{X} \rightarrow \algO (\Lambda_{X}) 
\]
be the natural action.
Then the image of $\rho$ is a finite index subgroup, and the kernel of $\rho$ is a finite subgroup of $\Aut (X) \subset \Bir (X)$.
\end{prop}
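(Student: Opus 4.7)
My plan is to handle the kernel and image statements separately, relying on Markman's results over the algebraic closure together with a Galois descent argument in the style of \cite{Bright2019}.

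\emph{The kernel.} Suppose $(\phi, w) \in \Ker \rho$, so that the action $\phi^{\ast}$ of $\phi$ on $\Lambda_X$ equals $w^{-1}$. Since $\phi$ is small by Remark \ref{remsmall}, $\phi^{\ast}$ preserves $\overline{\mv}_X$. But by Proposition \ref{fundweylrational}, $\overline{\mv}_X \cap \pos_X$ is a strict fundamental domain for the $R_X$-action on $\pos_X$, so the only element of $R_X$ carrying this set back into itself is the identity. Thus $w = 1$ and $\phi^{\ast}$ acts trivially on $\Lambda_X$. In particular $\phi$ fixes ample classes, so it is biregular, i.e.\ $\phi \in \Aut(X)$. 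Finally, the kernel of $\Aut(X) \to \algO(\Lambda_X)$ is finite: an automorphism acting trivially on $\Lambda_X$ fixes an ample line bundle $L$ and hence lies in the finite group scheme $\Aut(X, L)$.

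\emph{The image.} I would first establish the analogue over $\overline{k}$. Markman's monodromy theorem (\cite[Theorem 1.6]{Markman2011}; see also \cite{Markman2015}) implies that the image $\overline{H}$ of $\Bir(X_{\overline{k}}) \ltimes W_{Exc}$ in $\algO(\Lambda_{X_{\overline{k}}})$ has finite index: the monodromy group has finite index in the orthogonal group of the Beauville--Bogomolov form on $H^{2}$, and together with the reflections in $W_{Exc}$ this restricts to a finite-index subgroup on the Picard part. Since $\overline{H}$ is $G_k$-stable with finite cokernel in $\algO(\Lambda_{X_{\overline{k}}})$, taking $G_k$-invariants gives that $\overline{H}^{G_k}$ has finite index in $\algO(\Lambda_{X_{\overline{k}}})^{G_k} = \algO(\Lambda_X)$. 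The $G_k$-action on $\Bir(X_{\overline{k}}) \ltimes W_{Exc}$ is componentwise, so its fixed subgroup equals $\Bir(X_{\overline{k}})^{G_k} \ltimes W_{Exc}^{G_k} = \Bir(X) \ltimes R_X$. Applying $G_k$-invariants to the exact sequence $1 \to K \to \Bir(X_{\overline{k}}) \ltimes W_{Exc} \to \overline{H} \to 1$, where $K$ is the finite kernel of $\rho$ over $\overline{k}$ established above, identifies $\Ima \rho$ with the kernel of a connecting map $\delta \colon \overline{H}^{G_k} \to H^1(G_k, K)$.

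\emph{Main obstacle.} The principal remaining difficulty is to show that $\Ima \delta$ is finite, so that $\Ima \rho$ has finite index in $\overline{H}^{G_k}$. Although $K$ is finite and the $G_k$-action on $K$ factors through the finite group $\Aut(K)$, the set $H^1(G_k, K)$ can itself be infinite for a general profinite $G_k$. The plan is to exploit the geometric origin of the relevant cocycles---they measure the obstruction to descending pairs $(\phi, w)$ with $\phi$ small birational and $w$ a product of reflections in Galois-permuted exceptional classes---and to pin the image of $\delta$ into a finite subset of $H^1(G_k, K)$, following the strategy of \cite{Bright2019} adapted from the K3 setting to the higher-dimensional situation handled by Markman.
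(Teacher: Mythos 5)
Your kernel argument is sound and close in spirit to the paper's (the paper simply quotes the fact that a birational self-map preserving a very ample class is biregular, plus \cite[Proposition 2.26]{Brion2018}), and your overall strategy for the image --- reduce to $\overline{k}$ via Markman, then descend by taking $G_{k}$-invariants --- is exactly the route the paper takes. But the step you flag as the ``main obstacle'' is a genuine gap, and it is precisely the point where the paper supplies the missing idea. The resolution is not to analyse the geometric origin of the cocycles: it is the observation that the $G_{k}$-actions on $\Bir(\barX)$, on $W_{Exc}$, and on $\algO(\Lambda_{\barX})$ all factor through a \emph{finite} quotient of $G_{k}$. For $\algO(\Lambda_{\barX})$ and $W_{Exc}$ this is because $\Lambda_{\barX}$ is a finitely generated abelian group; for $\Bir(\barX)$ it uses the nontrivial input that $\Bir(\barX)$ is finitely generated (Boissi\`ere--Sarti, \cite[Theorem 2]{Boissiere2012}), so that an open subgroup of $G_{k}$ fixes all generators and hence acts trivially. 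Once the action factors through a finite group $\Gamma$, the relevant cohomology set is $H^{1}(\Gamma,K)$ with $K$ finite, which is finite, and \cite[Lemma 3.12]{Bright2019} packages the conclusion that passing to invariants preserves ``finite kernel and finite-index image.'' Without this finite-quotient observation your connecting-map argument cannot be closed, so as written the proof is incomplete.

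A second, smaller error: you assert $\algO(\Lambda_{\barX})^{G_{k}} = \algO(\Lambda_{X})$. This is false in general; an isometry of $\Lambda_{X}$ need not extend to a Galois-equivariant isometry of $\Lambda_{\barX}$, and a Galois-equivariant isometry of $\Lambda_{\barX}$ can restrict to the identity on $\Lambda_{X}$ without being trivial. The correct statement, which the paper uses, is that $\algO(\Lambda_{\barX})^{G_{k}}$ has finite index in the stabilizer $\algO(\Lambda_{\barX},\Lambda_{X})$, and that the restriction map $\algO(\Lambda_{\barX},\Lambda_{X}) \to \algO(\Lambda_{X})$ has finite kernel and finite-index image (\cite[Proposition 2.2]{Bright2019}); this suffices for the conclusion but requires the extra citation. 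Finally, you apply Markman's results directly over $\overline{k}$, whereas they are proved over $\C$; the paper inserts a short spreading-out argument (no infinitesimal birational automorphisms, invariance of $\Lambda$ and $\Bir$ under extension of algebraically closed fields) to pass from $\C$ to a general algebraically closed field of characteristic $0$. That step should be made explicit as well.
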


\begin{proof}
First, we note that a birational automorphism that sends a very ample class to a very ample class is an automorphism and an automorphism group of a quasi-polarized \hyp variety is a finite group (\cite[Proposition 2.26]{Brion2018}).
Therefore, the case of $k=\C$ follows from \cite[Proposition 6.18, Lemma 6.23]{Markman2011}.
Since an \hyp variety have no infinitesimal birational automorphisms (\cite[Theorem 3.8]{Blanc2017}) or infinitesimal line bundles, the case of $k= \overline{k}$ follows.
Indeed, one can take a finitely generated subfield $k'$ of $k$ such that $X$ comes from $X'$ over $k'$.
Take an embedding $\overline{k'} \hookrightarrow \C$. 
Then $\Bir (X) = \Bir (X'_{\overline{k'}}) = \Bir (X'_{\C})$ by the above fact.
On the other hand, we have $\Lambda_{X} = \Lambda_{X'_{\overline{k'}}} = \Lambda_{\C}$ by the above fact.
Now we have $R_{X} = R_{X'_{\overline{k'}}} = R_{X'_{\C}}$ and we can reduce to the case of $k= \C$.

In the following, we treat the general case.
We know that
\[
\rho' \colon \Bir (X_{\overline{k}}) \ltimes R_{X_{\overline{k}}} \rightarrow \algO (\Lambda_{X_{\overline{k}}})
\]
has finite kernel and the image of finite index.
We note that the birational automorphism group satisfies a Galois descent property 
$\Bir (X_{\overline{k}})^{G_{k}} = \Bir (X)$. 
Since $\Bir (X_{\overline{k}})$ is a finitely generated group (\cite[Theorem 2]{Boissiere2012}), the action of $G_{k}$ on $\Bir (X_{\overline{k}})$ factors through a finite quotient.
Similarly, since $\Lambda_{X_{\overline{k}}}$ is a finitely generated group, the actions of $G_{k}$ on $R_{X_{\overline{k}}}$ and $\algO(\Lambda_{X_{\overline{k}}})$ factors through a finite quotient of $G_k$.
Therefore, by \cite[Lemma 3.12]{Bright2019},
the restriction
\[
\Bir (X) \ltimes R_{X} \rightarrow \algO (\Lambda_{X_{\overline{k}}})^{G_{k}}
\]
has finite kernel and the image of finite index.
By \cite[Proposition 2.2 (2)]{Bright2019},
$\algO(\Lambda_{X_{\overline{k}}})^{G_{k}}$ is of finite index in $\algO(\Lambda_{X_{\overline{k}}}, \Lambda_{X})$.
Here, the group
$\algO(\Lambda_{X_{\overline{k}}}, \Lambda_{X})$ is subgroup of $\algO(\Lambda_{\barX})$ stabilizing $\Lambda_{X}$ as a subset.
Moreover, by \cite[Proposition 2.2 (1), (2)]{Bright2019},
the map $\algO(\Lambda_{\barX}, \Lambda_{X}) \rightarrow \algO(\Lambda_{X})$
has finite kernel and the image of finite index.
Therefore the morphism
\[
\Bir (X) \ltimes R_{X} \rightarrow \algO (\Lambda_{X})
\]
has finite kernel and the image of finite index.
\end{proof}

We will study the structure of $R_{X}$ by following the method of \cite{Bright2019}.
\begin{definition}
\begin{enumerate}
\item
Let $W$ be a group, and $T \subset W$ a set of elements of order $2$ which generates $W$ as a group.
For $t_{i},t_{j} \in T$, we denote the order of $t_{i}t_{j}$ by $n_{i,j}$ (if the order is infinite, we set $n_{i,j}=0$).
We say $(W,T)$ is a Coxeter system if $W$ has the following representation.
\[
\langle t \in T \mid t^{2}=1, (t_{i}t_{j})^{n_{i,j}}=1 \textup{ if } n_{i,j}\neq 0 \rangle.
\]
\item
Let $(W,T)$ be a Coxeter system.
Let $G$ be a graph with vertices $T$ and such that $t_{i}, t_{j}$ are adjacent in $G$ if and only if $t_{i}$ does not commute with $t_{j}$.
We label an integer to each edge between $t_{i}$ and $t_{j}$ by $n_{i,j}-2$ for $n_{i,j}>0$ and $0$ otherwise. We say this graph $G$ is the Coxeter--Dynkin diagram of the Coxeter system $(W,T)$.
\item
Let $(W,T)$ be a Coxeter system.
The length $\ell (w)$ of $w\in W$ is the length of a shortest word consists of elements of $T$ that represents $w$. If $W$ is finite, there exists a unique $w_{0}\in W$ such that $\ell (w_{0})$ is greater than $\ell (w)$ for any $w \neq w_{0}$.
We say $w_{0}$ is the longest element of $(W,T)$.
\end{enumerate}
\end{definition}
The following is an analogue of \cite[Proposition 3.6, Remark 3.9]{Bright2019}.

\begin{prop}\label{lembircone}
Let $k$ be a field of characteristic $0$.
Let $X$ be an \hyp variety over $k$.
Let $I$ be a Galois orbit of prime exceptional divisors on $\barX$.
We denote the subgroup of $W_{Exc}$ generated by the reflections with respect to $E\in I$ by $W_{I}$.
Then the following holds.
\begin{enumerate}
\item
If $W_{I}$ is finite, then one of the following holds. 
\begin{enumerate}
\item
For any $E_{1} , E_{2} \in I$ with $E_{1} \neq E_{2}$, we have $(E_{1}, E_{2})=0.$
\item
For any $E_{1} \in I$, there exists a unique $E_{2} \in I$ such that $E_{1} \neq E_{2}$ and $(E_{1}, E_{2}) \neq 0$.
Moreover, in this case, we have $(E_{1}, E_{1}) = -2 (E_{1}, E_{2})$.
\end{enumerate}
We denote the set of Galois orbits $I$ of prime exceptional divisors 
such that $W_{I}$ is finite by $F$.
\item
For $I \in F$, we denote the longest element of the Coxeter system $(W_{I}, I)$ by $r_{I}$.
Then $(R_{X}, \{r_{I}| I \in F\})$ is a Coxeter system.
Moreover, $r_{I}$ is given by the reflection with respect to the sum $C_{I}$ of all classes in $I$.
\item
A class $\lambda \in \pos_{X}$ lies in $\overline{\mv}_{X}$ if and only if $(\lambda,E) \geq 0$ for any $E\in I$ and $I \in F$.
\end{enumerate}
\end{prop}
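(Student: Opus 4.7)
My plan is to treat parts (1) and (3) as Galois-equivariant refinements of standard facts and to reserve the structural work for part (2), closely following the strategy of Bright--Logan--van Luijk in \cite{Bright2019}.

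For part (1), the key point is that $G_{k}$ acts transitively on $I$, so every $E\in I$ has the same self-pairing $(E,E)=-c$ for some $c>0$, and the Coxeter--Dynkin diagram of $(W_{I},I)$ has a vertex-transitive automorphism group. Working in the plane spanned by $E_{i},E_{j}$ I would compute that the order $m_{ij}$ of $r_{E_{i}}r_{E_{j}}$ satisfies $\cos(2\pi/m_{ij})=-1+2(E_{i},E_{j})^{2}/c^{2}$, while the integrality of the matrix of $r_{E_{i}}$ on $\Lambda_{\barX}$ forces $2(E_{i},E_{j})/c\in\Z$. These two constraints together permit only $m_{ij}=2$ (orthogonality) and $m_{ij}=3$ (which gives $(E_{i},E_{i})=-2(E_{i},E_{j})$). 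A finite Coxeter diagram with only order-$3$ edges and vertex-transitive automorphism group must then be $(A_{1})^{|I|}$ or $(A_{2})^{|I|/2}$, yielding cases (a) and (b) respectively.

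For part (2), I would use the descent principle for Coxeter systems under group actions (as in the method of Bright--Logan--van Luijk, and generally available through work of H\'ee and M\"uhlherr): if $G$ acts on a Coxeter system $(W,S)$ by permuting $S$ and each finite $G$-orbit $O\subset S$ generates a finite subgroup, then $W^{G}$ is again a Coxeter group with simple reflections the longest elements of those $\langle O\rangle$. Applied to $W_{Exc}$ with generating set $\{r_{E}:E\in\Pex(\barX)\}$ and to $G=G_{k}$, the finite-orbit hypothesis is precisely $I\in F$, so $R_{X}=W_{Exc}^{G_{k}}$ acquires the claimed Coxeter presentation with generators $r_{I}$ for $I\in F$. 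Identifying $r_{I}$ with the reflection in $C_{I}$ on $\Lambda_{X}=\Lambda_{\barX}^{G_{k}}$ is then a direct computation: in case (a), $r_{I}=\prod_{E\in I}r_{E}$ acts as $-1$ on $\mathrm{span}(I)$ and as identity on its orthogonal complement, and since $\Lambda_{X}\cap\mathrm{span}(I)=\Z\cdot C_{I}$ its restriction to $\Lambda_{X}$ coincides with $r_{C_{I}}$; in case (b) the longest element is a commuting product of reflections in the pair-sums $E+E'$, which together reflect $C_{I}$ and fix $C_{I}^{\perp}\cap\Lambda_{X}$.

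For part (3), I would invoke Proposition \ref{fundweylrational}: $\overline{\mv}_{X}\cap\pos_{X}$ is a fundamental domain for $R_{X}$ on $\pos_{X}$. The walls of a Coxeter fundamental domain are the fixed hyperplanes of its simple reflections, which by part (2) are the hyperplanes $C_{I}^{\perp}\subset\Lambda_{X,\R}$ for $I\in F$, with the correct side being $(\lambda,C_{I})\geq 0$. For $\lambda\in\pos_{X}$ Galois invariance gives $(\lambda,C_{I})=\sum_{E\in I}(\lambda,E)=|I|(\lambda,E_{0})$ for any $E_{0}\in I$, so the wall inequality is equivalent to $(\lambda,E)\geq 0$ for all $E\in I$. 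The main obstacle is part (2): both invoking a non-trivial descent theorem for (potentially infinite) Coxeter systems and verifying case-by-case that the longest element of $W_{I}$ really acts on the Galois-invariant lattice $\Lambda_{X}$ as the single reflection $r_{C_{I}}$.
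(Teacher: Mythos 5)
Your parts (1) and (2) follow essentially the same route as the paper: the trace/rationality computation in the plane spanned by two orbit elements (ruling out product order $4$ and $6$ and leaving only $2$ and $3$), the vertex-transitivity argument forcing the diagram to be $A_{1}^{|I|}$ or $A_{2}^{|I|/2}$, and the descent theorem for Coxeter systems under diagram automorphisms (the paper cites Geck--Iancu where you cite H\'ee--M\"uhlherr; these are the same circle of results). Your explicit verification that the longest element restricts to $r_{C_{I}}$ on $\Lambda_{X,\R}$ is if anything cleaner than the paper's, which writes the identity $r_{E_{1}}\cdots r_{E_{r}}=r_{C_{I}}$ without flagging that it only holds after restriction to the Galois-invariant part.

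Part (3), however, has a genuine gap. The nontrivial content of (3) is the ``if'' direction: one must show that for a Galois orbit $I$ with $W_{I}$ \emph{infinite}, the conditions $(\lambda,E)\geq 0$ for $E\in I$ are automatically satisfied on all of $\pos_{X}$, so that only the orbits in $F$ impose genuine constraints. Proposition \ref{fundweyl} only tells you that $\overline{\mv}_{X}\cap\pos_{X}$ is cut out inside $\pos_{X}$ by the half-spaces attached to \emph{all} prime exceptional divisors; discarding the infinite-type orbits is exactly what needs proof. Your justification --- that the walls of the fundamental domain from Proposition \ref{fundweylrational} are the fixed hyperplanes of the simple reflections $r_{C_{I}}$, $I\in F$ --- presupposes that $\overline{\mv}_{X}\cap\pos_{X}$ is the standard chamber of the Coxeter system $(R_{X},\{r_{I}\}_{I\in F})$, i.e.\ that its bounding walls are precisely the $C_{I}^{\perp}$ with $I\in F$; knowing merely that a convex set is \emph{a} fundamental domain for \emph{a} Coxeter group does not identify its walls with the hyperplanes of a chosen Coxeter generating set, and in any case says nothing about why the hyperplanes $E^{\perp}$ for $E$ in an infinite-type orbit do not cut into $\pos_{X}$. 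The paper closes this gap by a separate argument: if some $\lambda\in\pos_{X}$ were orthogonal to an $E$ in an infinite-type orbit $I$, then by Galois invariance $\lambda$ is orthogonal to all of $I$, hence fixed by $W_{I}$, hence orthogonal to the whole $W_{I}$-orbit of $E$; Speyer's theorem shows this orbit is infinite, contradicting the local finiteness of the wall arrangement for classes of bounded negative square. (An alternative repair: such a $\lambda$ would force $\mathrm{span}(I)$ to lie in the negative definite space $\lambda^{\perp}$, making $W_{I}$ a discrete subgroup of a compact orthogonal group, hence finite.) Some such argument must be supplied for (3) to go through.
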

\begin{proof}
First, we note that $(W_{Exc}, \Pex)$ and $(W_{I}, I)$ are Coxeter systems (see \cite[Section 5.4]{Heckman2018} and \cite[Proposition 2.4.1 (i)]{Bjoerner2005}).

(1) Let $I$ be a Galois orbit of prime exceptional divisors such that $W_{I}$ is finite.
Take $E_{1}, E_{2} \in I$ with $E_{1} \neq E_{2}$. 
Then by \cite[Theorem 5.8]{Markman2011}, their classes in $\Lambda_{\barX}$ are different, so their classes are linearly independent.
We put $\alpha \coloneqq (E_{1}, E_{2})$ and $\beta \coloneqq (E_{1}, E_{1}) = (E_{2} , E_{2})$.
Note that $r_{E_{1}}$ and $r_{E_{2}}$ stabilize the $\R$-vector space
$W \coloneqq \R E_{1} \oplus \R E_{2},$ and their representation matrix is the following.
\[
r_{E_{1}}|_{W} = \left(
\begin{array}{cc}
\displaystyle -1 & \displaystyle -2 \frac{\alpha}{\beta} \\
& \\
\displaystyle 0 & \displaystyle 1
\end{array}
\right),
r_{E_{2}}|_{W} = \left(
\begin{array}{cc}
\displaystyle 1 & \displaystyle 0 \\
& \\
\displaystyle -2\frac{\alpha}{\beta} & \displaystyle -1
\end{array}
\right).
\]
Therefore, we have 
\[
r_{E_{1}} r_{E_{2}} |_{W}= \left(
\begin{array}{cc}
\displaystyle -1+ 4\frac{\alpha^{2}}{\beta^{2}} & \displaystyle 2\frac{\alpha}{\beta} \\
& \\
\displaystyle -2\frac{\alpha}{\beta} & \displaystyle -1
\end{array}
\right).
\]
Since $W_{I}$ is finite and the above matrix has $\Q$-coefficient, 
$r_{E_{1}}r_{E_{2}}|_{W}$ has the order $2,3,$ or $4$.
By seeing a trace, we can show that the order cannot be $4$.
Moreover, we have the order is $2$ if and only if $\alpha =0$,
and the order is $3$ if and only if $\beta = -2 \alpha$.
On the other hand, since the Coxeter--Dynkin diagram of $(W_{I}, I)$ is a finite union of finite trees (\cite[Exercise 1.4]{Bjoerner2005}), there exists a vertex of degree $\leq 1$.
Since $G_{k}$ acts transitively on the diagram, we have either every vertex have degree $0$ or every vertex have degree $1$.
Therefore, if $W_{I}$ is finite, then (a) or (b) holds.
Therefore, it finishes the proof of part (1).

(2) The first part follows from 
\cite[Theorem 1]{Geck2014}.
We will show the second part.
First, we consider the case (a). We write $I$ as $\{E_{1}, \ldots, E_{r} \}$.
Then $W_{I}$ is isomorphic to the Coxeter group $A_{1}^{r}$. Therefore, their longest element is given by $r_{E_{1}} r_{E_{2}} \cdots r_{E_{r}} = r_{C_{I}}$.
Next, we consider the case $(b)$. 
We write $I$ as $\{E_{1}, E_{1}', \ldots, E_{r}, E_{r}'\}$, where $(E_{i}, E_{i}') \neq 0.$
Then $W_{I}$ is isomorphic to the Coxeter group $A_{2}^{r}$, and their longest element is given by 
$(r_{E_{1}}r_{E_{1}'}r_{E_{1}}) \cdots (r_{E_{r}}r_{E_{r}'}r_{E_{r}}) 
= r_{E_{1}+E_{1}'} \cdots r_{E_{r} + E_{r}'} 
= r_{C_{I}}$.  Therefore, it finishes the proof of part (2).

(3) 
Let $F' \subset \Lambda_{\barX}$ be a set of all the classes $x \in \Lambda_{\barX}$ such that $(x,x) < 0$. 
By the argument in \cite[Remark 8.2.3]{Huybrechts2016}, 
the following union is locally finite.
\[
\bigcup_{x \in F'} (x^{\perp} \cap \pos_{\barX}) \subset \pos_{\barX}.
\]
Let $I$ be a Galois orbit of prime exceptional divisors such that $W_{I}$ is infinite.
Suppose that there exists a $\lambda \in \pos_{X}$ such that $(\lambda, E) = 0$ for some $E \in I$.
Then $\lambda$ is orthogonal to every element in $I$.
Therefore, $\lambda$ is fixed by the action of $W_{I}$, and therefore $\lambda$ is orthogonal to $w(E)$ for any $E \in I, w \in W_{I}$. 
Since $w r_{E} w^{-1} = r_{w(E)}$, by \cite[Theorem1]{Speyer2009}, the set $W_{I}E$ is infinite for some $E\in I$.
Therefore, $\lambda$ is orthogonal to infinitely many elements in $F'$, which is a contradiction.  
Thus, for any $E \in I,$ the wall $E^{\perp}$ does not meet with $\pos_{X}$.
Combining with Proposition \ref{fundweyl}, it finishes the proof.
\end{proof}
The main theorem in this subsection is the following.

\begin{theorem}\label{bircone}
Let $k$ be a field of characteristic $0$.
Let $X$ be an \hyp variety over $k$.
Then there exists a rational polyhedral cone $\Pi$ in $\mv_{X}^{+}$ which is a fundamental domain for the action of $\Gamma_{\Bir(X)}$ on $\mv_{X}^{+}$.
Here, $\Gamma_{\Bir(X)}$ means the image of the birational automorphism group via the natural action $\Bir(X) \rightarrow \algO (\Lambda_{X,\R})$.
\end{theorem}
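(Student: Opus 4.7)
The plan is to adapt the descent technique of Bright--Logan--van Luijk \cite{Bright2019}, developed for K3 surfaces, to the \hyp setting. The starting point is Markman's birational cone conjecture over an algebraically closed field of characteristic zero: $\Bir(X_{\overline{k}})$ acts on $\mv_{X_{\overline{k}}}^+$ with a rational polyhedral fundamental domain. The main structural ingredients now in place are Proposition \ref{indexbir}, which makes $\Bir(X) \ltimes R_X$ commensurable with $\algO(\Lambda_X)$ via a map with finite kernel, and Proposition \ref{fundweylrational}, which identifies $\overline{\mv}_X \cap \pos_X$ as a fundamental domain for the action of $R_X$ on $\pos_X$.

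First I would reformulate Markman's fundamental domain over $\overline{k}$ in terms of the full positive cone. Combining the rational polyhedral fundamental domain for $\Bir(X_{\overline{k}})$ on $\mv_{X_{\overline{k}}}^+$ with Proposition \ref{fundweyl} (which identifies $\overline{\mv}_{X_{\overline{k}}} \cap \pos_{X_{\overline{k}}}$ as a fundamental domain for $W_{Exc}$), one obtains a rational polyhedral fundamental domain for $\Bir(X_{\overline{k}}) \ltimes W_{Exc}$ acting on $\pos_{X_{\overline{k}}}^+$. By Proposition \ref{indexbir}, the image of this group in $\algO(\Lambda_{X_{\overline{k}}})$ has finite index and finite kernel, and the standard Looijenga-type principle (as used throughout \cite{Bright2019}) preserves rational polyhedrality under passage to commensurable groups.

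Next, I would descend from $\overline{k}$ to $k$. Because $\Bir(X_{\overline{k}})$ is finitely generated (by \cite{Boissiere2012}, cited in the proof of Proposition \ref{indexbir}) and $\Lambda_{X_{\overline{k}}}$ is a finitely generated abelian group, the $G_k$-actions on both factor through finite quotients. One then follows the argument of \cite[Lemma 3.12]{Bright2019} together with \cite[Proposition 2.2]{Bright2019} to pass from a rational polyhedral fundamental domain on $\pos_{X_{\overline{k}}}^+$ for the relevant finite-index subgroup of $\algO(\Lambda_{X_{\overline{k}}})$ to one on $\pos_X^+$ for the image of $\Bir(X) \ltimes R_X$ in $\algO(\Lambda_X)$.

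Finally, I would peel off the $R_X$ factor: intersecting the fundamental domain obtained above with $\overline{\mv}_X \cap \pos_X^+$, which is a fundamental domain for $R_X$ on $\pos_X$ by Proposition \ref{fundweylrational}, yields the desired rational polyhedral cone $\Pi \subset \mv_X^+$ that is a fundamental domain for $\Gamma_{\Bir(X)}$. The main obstacle is the Galois descent: checking that rational polyhedrality is preserved when restricting from $\Lambda_{X_{\overline{k}},\R}$ to the $G_k$-invariant part $\Lambda_{X,\R}$. This hinges on the finiteness of the $G_k$-action on $\Lambda_{X_{\overline{k}}}$ and $\Bir(X_{\overline{k}})$, together with the averaging argument of \cite[Lemma 3.12]{Bright2019}, and is the direct analogue of the K3 descent carried out there.
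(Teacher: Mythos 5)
There is a genuine gap in your final step. You propose to take a rational polyhedral fundamental domain $D$ for the image of $\Bir(X)\ltimes R_{X}$ on $\pos_{X}^{+}$ and then intersect it with $\overline{\mv}_{X}\cap\pos_{X}^{+}$ to obtain a fundamental domain for $\Gamma_{\Bir(X)}$ on $\mv_{X}^{+}$. This fails for a general $D$: if $x\in\mv_{X}^{+}$ and $(\phi,r)\in\Bir(X)\ltimes R_{X}$ is the element carrying $x$ into $D$, then $r\phi(x)\in D$ but $\phi(x)\in\overline{\mv}_{X}$, and when $r\neq 1$ the point $r\phi(x)$ generically lies outside $\overline{\mv}_{X}$. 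Since $D$ contains essentially only one representative of each orbit of the semidirect product, and every $\Bir(X)$-translate of $x$ lies in $\overline{\mv}_{X}$, the set $D\cap\overline{\mv}_{X}$ can simply miss the $\Bir(X)$-orbit of $x$ altogether. The intersection argument only works if one first arranges $D\subset\overline{\mv}_{X}$, and that containment is exactly the nontrivial point. The paper secures it by taking the Looijenga--Sterk domain $\Pi=\{x\in\pos_{X}^{+}\mid(\gamma x,y)\geq(x,y)\ \forall\gamma\in\Gamma\}$ for an ample class $y$, specializing the defining inequalities to $\gamma=r_{C_{I}}$ (the longest elements of the finite Galois-orbit Coxeter subgroups of Proposition \ref{lembircone}) to deduce $(x,C_{I})\geq 0$, and invoking Proposition \ref{lembircone}\,(3), which says these finitely many orbit-wise inequalities cut out $\overline{\mv}_{X}$ inside $\pos_{X}$ (in particular, orbits with infinite $W_{I}$ contribute no walls meeting $\pos_{X}$). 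Your proposal never touches Proposition \ref{lembircone}, and without that Coxeter-theoretic control of $R_{X}$ over $k$ there is no way to place the fundamental domain inside the movable cone.

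A secondary issue is your descent step: one does not descend Markman's fundamental domain from $\overline{k}$ by restriction to $\Lambda_{X,\R}$ (the restriction of a fundamental domain to the Galois-invariant subspace need not be a fundamental domain for the Galois-invariant subgroup). What actually descends is the arithmeticity: Proposition \ref{indexbir} shows the image of $\Bir(X)\ltimes R_{X}$ has finite index in $\algO(\Lambda_{X})$, and then the Sterk construction is run directly over $k$ on $\pos_{X}^{+}$. Once $\Pi\subset\overline{\mv}_{X}$ is known, the conclusion follows as in the paper: for $x\in\mv_{X}^{+}$ one finds $r\phi(x)\in\Pi\subset\overline{\mv}_{X}$ with $\phi(x)\in\overline{\mv}_{X}$, and Proposition \ref{fundweylrational} forces $r\phi(x)=\phi(x)\in\Pi$.
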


\begin{proof}
Let $\Gamma$ be the image of $\rho \colon \Bir (X) \ltimes R_{X} \rightarrow \algO (\Lambda_{X})$, which is an arithmetic subgroup of $\algO(\Lambda_{X})$.
We choose an ample class $y \in \pos_{X} \cap \Lambda_{X}$.
The set 
\[
\Pi \coloneqq 
\{
x \in \pos_{X}^{+} \mid (\gamma x, y) \geq (x, y) \textup{ for all } \gamma \in \Gamma
\}
\]
is rational polyhedral as in the proof of \cite[Corollary 3.15]{Bright2019}.
Let $I$ be a Galois orbit of prime exceptional divisors such that $W_{I}$ is finite.
Let $C_{I}$ be as in Proposition \ref{lembircone}.
Taking $\gamma = r_{C_{I}}$, one can show that 
$x\in \Pi$ satisfies $(x, C_{I}) \geq 0$.
By Proposition \ref{lembircone} (3), $\Pi$ is contained in $\overline{\mv}_{X}.$
Now we will show that $\Pi$ satisfies desired properties.
For any $x \in \mv^{+}_{X}$, one can find $\phi \in \Bir (X)$ and $r \in R_{X}$ such that
$r\phi (x)$ lies in $\Pi \subset \overline{\mv}_{X}.$
On the other hand, $\phi (x) \in \overline{\mv}_{X}.$ 
Therefore, by Proposition \ref{fundweylrational}, we have $r \phi (x) = \phi (x) \in \Pi$.
It finishes the proof.
\end{proof}

\subsection{Automorphism cone conjecture}
\label{subsectautomcone}

In this subsection,
let $k$ be a field of characteristic $0$, $X$ an \hyp variety over $k$.
Let $\Sigma \subset \Lambda_{\barX}$ be the set 
\[
\Sigma \coloneqq
\left\{
f_{\ast} (e) \left| 
\begin{array}{l}
e \in \nef_{Y}^{\ast} \textup{ is integral, primitive, and extremal, and }\\
f\colon Y \dashrightarrow \barX \textup{ is a birational map of \hyp varieties over } \overline{k}
\end{array}
\right.
\right\}.
\]
Here, the cone $\nef_{Y}^{\ast} \subset \Lambda_{Y,\R}$ is the dual cone of $\nef_{Y}$.
We note that any element in $\Sigma$ is a primitive MBM class in the sense of \cite[Definition 2.14]{Amerik2017}.
\begin{definition}
\label{defbiramp}
Let $\mv^{0}_{\barX}$ be the interior of the movable cone.
We define the birational ample cone of $\barX$ by 
\[
\ba_{\barX} \coloneqq \mv_{\barX}^{0} \setminus (\cup _{\lambda \in \Sigma} \lambda^{\perp} \cap \mv_{\barX}^{0}).
\]
We denote the intersection $\ba_{\barX} \cap \Lambda_{X,\R}$ by $\ba_{X}$.
\end{definition}

\begin{prop}\label{biramp}
Let $f \colon Y\dashrightarrow X$ be a birational map of \hyp varieties over $k$.
Then the image $f_{\ast}(\amp_{Y})$ is a connected component of $\ba_{X}$. Moreover, every connected component of $\ba_{X}$ is of this form.
\end{prop}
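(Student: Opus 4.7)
The plan is to first establish the analogous statement over $\overline{k}$, where it is essentially Markman's chamber decomposition of the movable cone, and then use Galois descent to pass to $k$. Over $\overline{k}$, one knows (Markman, Amerik--Verbitsky) that the connected components of $\ba_{\barX}$ are exactly the chambers of $\mv^{0}_{\barX}$ cut out by the orthogonal hyperplanes to primitive MBM classes, and that these chambers coincide with the images $g_{\overline{k},\ast}(\amp_{Z_{\overline{k}}})$ for birational maps $g \colon Z_{\overline{k}} \dashrightarrow \barX$ of \hyp varieties. An auxiliary verification is that our set $\Sigma$ agrees with the set of primitive MBM classes: these are characterized (up to sign) as the primitive classes whose orthogonals are walls of the nef cone of some birational model, and by duality they are exactly the classes of the form $g_{\ast}(e)$ for $e$ a primitive integral extremal ray of $\nef^{\ast}_{Z_{\overline{k}}}$, matching the definition of $\Sigma$.

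For the first assertion, given a birational map $f \colon Y \dashrightarrow X$ over $k$, I would use Lemma \ref{conerat}(1) together with the base-change compatibility of $f_{\ast}$ to identify $f_{\ast}(\amp_{Y}) = f_{\overline{k},\ast}(\amp_{Y_{\overline{k}}}) \cap \Lambda_{X,\R}$. Over $\overline{k}$ the right-hand side is a chamber $C$ of $\ba_{\barX}$, and it is stable under $\Gal(\overline{k}/k)$ because $f$ is defined over $k$. Since chambers are convex, $C \cap \Lambda_{X,\R}$ is convex, hence connected, and is therefore a connected component of $\ba_{X}$; maximality among connected subsets follows from the key observation that any chamber of $\ba_{\barX}$ meeting $\Lambda_{X,\R}$ is automatically Galois-stable---a Galois-fixed element of $C$ also lies in $\sigma(C)$, forcing $\sigma(C) = C$ by disjointness of distinct chambers.

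For the converse, given a connected component $D$ of $\ba_{X}$, the same disjointness-plus-Galois-fixity argument produces a unique Galois-stable chamber $C$ of $\ba_{\barX}$ with $D = C \cap \Lambda_{X,\R}$. By the result over $\overline{k}$ we have $C = g_{\overline{k},\ast}(\amp_{Z_{\overline{k}}})$ for some birational $g \colon Z_{\overline{k}} \dashrightarrow \barX$, and the pair $(Z_{\overline{k}}, g)$ is determined by $C$ up to unique isomorphism: two such pairs are related by a small birational map (Remark \ref{remsmall}) that induces the identity on $\Lambda_{\barX}$, which is forced to be an isomorphism. The Galois-stability of $C$ then yields a compatible cocycle of isomorphisms $Z_{\overline{k}}^{\sigma} \simeq Z_{\overline{k}}$ intertwining $g$ and $g^{\sigma}$, and effective Galois descent for projective varieties descends the pair to a birational map $g \colon Z \dashrightarrow X$ over $k$ with $g_{\ast}(\amp_{Z}) = D$.

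The main obstacle I anticipate is the descent step in the converse: one must verify uniqueness of the pair $(Z_{\overline{k}}, g)$ from the chamber in order to interpret Galois-stability as honest descent data, and then invoke effective descent for the resulting projective pair. The preparatory identification of $\Sigma$ with the primitive MBM classes is essentially bookkeeping but also needs to be carried out in order to quote the $\overline{k}$ result in the form stated.
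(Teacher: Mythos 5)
Your proposal is correct and follows essentially the same route as the paper: reduce to the algebraically closed case via Markman--Yoshioka's chamber description of the birational ample cone, identify $f_{\ast}(\amp_{Y})$ with $f_{\overline{k},\ast}(\amp_{Y_{\overline{k}}}) \cap \Lambda_{X,\R}$ using Lemma \ref{conerat}, and for the converse descend the birational model realizing a Galois-stable chamber by exploiting that a birational map of such varieties sending an ample class to an ample class is an isomorphism. The paper makes the descent datum explicit as $\psi_{\sigma} = {}^{\sigma}f^{-1} \circ \phi_{\sigma} \circ f$ rather than invoking uniqueness of the pair realizing the chamber, but this is only a difference in packaging.
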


\begin{proof}
The case of $k= \overline{k}$ follows from \cite[Proposition 2.1]{Markman2015}.
We treat the general case.
We note that
\begin{equation}\label{intersect}
f_{\overline{k},\ast}(\amp_{Y_{\overline{k}}}) \cap \Lambda_{X} = f_{\ast}(\amp_{Y}).
\end{equation}
Indeed, an element $f_{\overline{k},\ast} (x)$ of the left-hand side is Galois invariant, and since $f$ is defined over $k$, $x \in \amp_{Y_{\overline{k}}}$ is also Galois invariant. Therefore, Lemma \ref{conerat} gives desired equality.
Therefore, the first part of Proposition \ref{biramp} follows from the case of $k= \overline{k},$ since $f_{\ast}(\amp_{Y})$ is connected.

We show the remaining part of Proposition \ref{biramp}.
Let $A$ be a connected component of $\ba_{X}.$
From the case of $k=\overline{k}$, there exists a birational map $f \colon Z \dashrightarrow \barX$  of \hyp varieties over $\overline{k}$, such that $A$ is contained in $f_{\ast} (\amp_{Z}) \subset \ba_{\barX}.$
For any $\sigma \in G_{k}$, by the $k$-structure of $X$, we have an isomorphism 
\[
\phi_{\sigma} \colon \barX \rightarrow  \sigma(\barX).
\]
On the other hand, we have a birational map $^{\sigma}f\colon \sigma (Z) \dashrightarrow \sigma (\barX)$.
We want to show that $\psi_{\sigma}= ^{\sigma}f^{-1} \circ \phi_{\sigma} \circ f$ is an actual isomorphism, so that $\psi_{\sigma}$ gives $k$-structure on $Z$.
Take an element $L \in A$, and let $M \in \amp_{Z}$ such that $f_{\ast} (M) =L$.
Note that there exist classes $L'= \sigma (L)$ on $\sigma(\barX)$, $M' = \sigma (M)$ on $\sigma (Z)$.
Then we have
\[
\psi_{\sigma,\ast} (M) = ^{\sigma}f^{-1}_{\ast} \circ \phi_{\sigma,\ast} (L) = ^{\sigma}f^{-1} (L') = M'.
\]
Here, the second equality follows from $L \in \Lambda_{X,\R}.$
Therefore, $\psi_{\sigma}$ pulls back an ample class to an ample class, thus $\psi_{\sigma}$ is an isomorphism
(indeed, in this case, $\psi_{\sigma}$ pulls back a very ample class to a very ample class).
Therefore, $Z$ descends to a variety $Y$ over $k$, and by the definition of the $k$-structure on $Y$, a birational map $f$ is defined over $k$.
Combining with the equality (\ref{intersect}), it finishes the proof.
\end{proof}

\begin{theorem}[{\cite[Theorem 5.3]{Amerik2017}, \cite[Corollary 1.4]{Amerik2016}}]
Suppose that $b_{2} (\barX) \geq 5$.
Then $\Sigma$ have bounded Beauville--Bogomolov square.
\label{automconebarX}
\end{theorem}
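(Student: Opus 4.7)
The plan is to reduce the statement to the existing boundedness theorem for primitive MBM classes due to Amerik and Verbitsky, which is the content of the cited references. As noted just above the theorem statement, every element $\lambda \in \Sigma$ is a primitive MBM class on $\barX$ in the sense of \cite[Definition 2.14]{Amerik2017}, so once this identification is made precise the conclusion follows from the cited results.

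First I would verify the MBM identification carefully. Any birational map $f\colon Y\dashrightarrow \barX$ of \hyp varieties over $\overline{k}$ is small by Remark \ref{remsmall}, and hence induces an isometry $f_{\ast}\colon \Lambda_{Y}\to\Lambda_{\barX}$ with respect to the Beauville--Bogomolov forms. Under the identification $\Lambda_{Y,\R}\simeq\Lambda_{Y,\R}^{\ast}$ provided by the Beauville--Bogomolov form, the primitive integral extremal rays of $\nef_{Y}^{\ast}$ correspond to the primitive integral classes spanning the codimension-one faces of $\nef_{Y}$, i.e.\,to the walls of the nef cone. By the wall-crossing description of Markman and Amerik--Verbitsky, such wall classes are exactly the primitive MBM classes associated to $Y$, and pushing forward along $f_{\ast}$ transports them to primitive MBM classes on $\barX$ (the class of birational deformation equivalence is preserved since $Y$ is itself a birational model of $\barX$).

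Granting this identification, I would invoke \cite[Theorem 5.3]{Amerik2017} (equivalently \cite[Corollary 1.4]{Amerik2016}), which asserts that for an \hyp variety with $b_{2}\geq 5$ the set of primitive MBM classes has bounded Beauville--Bogomolov square. Since $\Sigma$ is contained in this set by the previous step, it has bounded Beauville--Bogomolov square as well. The hypothesis $b_{2}(\barX)\geq 5$ enters precisely because Amerik--Verbitsky's argument uses the strong Torelli theorem together with ergodic/hyperbolic-geometric properties of the monodromy action on the period domain, and these require signature at least $(3, 2)$ on the transcendental lattice.

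The main obstacle I expect is not the reduction itself, which is essentially bookkeeping about how small birational maps intertwine the notions of MBM class on $Y$ and on $\barX$, but rather the Amerik--Verbitsky bound, whose proof rests on substantial deformation-theoretic and Hodge-theoretic input. I would therefore treat that bound as a black box and focus the exposition on the verification that $\Sigma$ really does consist of primitive MBM classes in the Amerik--Verbitsky sense.
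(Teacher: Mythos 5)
Your proposal is correct and matches the paper, which offers no proof of its own: the theorem is quoted directly from Amerik--Verbitsky, with only the preceding remark that every element of $\Sigma$ is a primitive MBM class in the sense of \cite[Definition 2.14]{Amerik2017} serving as the bridge. Your additional verification that small birational maps transport extremal rays of $\nef_{Y}^{\ast}$ to primitive MBM classes on $\barX$ is exactly the bookkeeping the paper leaves implicit, so there is nothing to add.
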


In the following, we suppose that $b_{2} (\barX) \geq 5$.
Let $\Pi \subset \mv_{X}^{+}$ be a rational polyhedral fundamental domain as in Theorem \ref{bircone}.
By Theorem \ref{automconebarX} and \cite[Proposition 3.4]{Markman2015}, the set 
\[
\{\lambda \in \Sigma \mid \lambda^{\perp} \cap \Pi \cap \pos_{X} \neq \emptyset\}
\]
is finite.
This set divides $\Pi$ into a finite union of closed rational polyhedral subcones. We denote these subcones by 
\[
\Pi_{j}, \ \ j\in J,
\]
where $J$ is a finite index set.

\begin{lemma}\label{lemnef}
Let $f \colon Y\dashrightarrow X$ be a birational map of \hyp varieties over $k$.
Let $g$ be an element of $\Bir(X)$, such that $g(\Pi_{j})$ intersects $f_{\ast} (\amp_{Y})$.
Then  $g(\Pi_{j}) = f_{\ast}(\nef_{Y}) \cap g (\Pi)$.
\end{lemma}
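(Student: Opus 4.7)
The plan is to exploit that the MBM-wall arrangement $\{\mu^{\perp}\}_{\mu\in\Sigma}$ is $\Bir(X)$-invariant, so the cones $g(\Pi_{j})$ enjoy the same ``no wall crosses the relative interior'' property that the $\Pi_{j}$ do. Concretely, for $\mu=h_{\ast}(e)\in\Sigma$ and any $g\in\Bir(X)$, we have $g_{\ast}\mu=(g\circ h)_{\ast}(e)\in\Sigma$, so $g_{\ast}$ permutes $\Sigma$. If some $\mu^{\perp}$ with $\mu\in\Sigma$ crossed the relative interior of $g(\Pi_{j})$, then $g^{-1}_{\ast}(\mu)^{\perp}$ would cross the relative interior of $\Pi_{j}$; since $\Pi\subset\mv_{X}^{+}\subset\overline{\pos_{X}}$ and $\Pi_{j}$ is a full-dimensional subcone, its relative interior lies in $\pos_{X}$, so $g^{-1}_{\ast}(\mu)^{\perp}$ would belong to the finite list of walls used to subdivide $\Pi$, contradicting that $\Pi_{j}$ is one of the undivided pieces.

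For the inclusion $g(\Pi_{j})\subseteq f_{\ast}(\nef_{Y})\cap g(\Pi)$: the relative interior of $g(\Pi_{j})$ is convex, lies in $\mv_{X}^{0}$, and avoids every MBM wall, so it sits in a single connected component of $\ba_{X}$. The hypothesis that $g(\Pi_{j})$ meets $f_{\ast}(\amp_{Y})$ (together with openness of $f_{\ast}(\amp_{Y})$) forces this component to be $f_{\ast}(\amp_{Y})$. Taking closures yields $g(\Pi_{j})\subseteq\overline{f_{\ast}(\amp_{Y})}=f_{\ast}(\nef_{Y})$, where the second equality uses that the small birational map $f$ induces a linear isomorphism $f_{\ast}$ on $\Lambda_{\R}$ (Remark~\ref{remsmall}) and hence commutes with closure; the inclusion in $g(\Pi)$ is tautological.

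For the reverse inclusion, first show $f_{\ast}(\amp_{Y})\cap g(\Pi)\subseteq g(\Pi_{j})$. Any point of this intersection lies in some $g(\Pi_{j'})$, and then $g(\Pi_{j'})\cap f_{\ast}(\amp_{Y})\neq\emptyset$, so by the argument just given $g(\Pi_{j'})\subseteq f_{\ast}(\nef_{Y})$; distinct cones $\Pi_{j'}$ are separated by MBM walls and therefore land in distinct connected components of $\ba_{X}$, forcing $j'=j$. To finish, apply the standard convex-geometric identity that for an open convex cone $A$ and a closed convex cone $K$ with $A\cap K\neq\emptyset$ one has $\overline{A}\cap K=\overline{A\cap K}$; with $A=f_{\ast}(\amp_{Y})$ and $K=g(\Pi)$ (non-emptiness supplied by the hypothesis on $g(\Pi_{j})$) this gives $f_{\ast}(\nef_{Y})\cap g(\Pi)=\overline{f_{\ast}(\amp_{Y})\cap g(\Pi)}\subseteq g(\Pi_{j})$, the last step by closedness of $g(\Pi_{j})$. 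The main obstacle is the bookkeeping in the first paragraph --- checking that the walls dividing $\Pi$ are precisely those meeting $\Pi\cap\pos_{X}$ and that $g$-translation preserves this list --- which boils down to the invariance of $\Sigma$ under $\Bir(X)$ together with the fact that $g$ preserves both $\pos_{X}$ and the movable cone.
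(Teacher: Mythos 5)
Your argument is correct and is essentially the paper's proof (itself following \cite[Lemma 2.3]{Markman2015}): both rest on Proposition \ref{biramp} identifying $f_{\ast}(\amp_{Y})$ as a connected component of $\ba_{X}$, on the fact that the interiors of the subcones avoid all walls $\lambda^{\perp}$, $\lambda\in\Sigma$, and on convexity to pass between interiors and closures. The only cosmetic difference is that the paper first replaces $f$ by $g^{-1}\circ f$ to reduce to $g=1$, whereas you handle general $g$ directly via the $\Bir(X)$-invariance of $\Sigma$; your write-up is, if anything, more explicit about the closure bookkeeping that the paper leaves implicit.
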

\begin{proof}
The proof is the same as in \cite[Lemma 2.3]{Markman2015}, 
but we include the proof.
By replacing $f$ by $g^{-1} \circ f$, we may assume that $g =1$.
We denote the interior of $\Pi_{j}$ (resp.\,$\Pi$) by $\Pi_{j}^{0}$ (resp.\,$\Pi^{0}$).
It suffices to prove the equality 
\[
\Pi_{j}^{0} = f_{\ast} (\amp_{Y}) \cap \Pi^{0}.
\]
The inclusion $\subset$ follows from Proposition \ref{biramp}.
Another inclusion also follows since the right-hand side is convex, so connected and disjoint from the hyperplane $\lambda^{\perp}$ for every $\lambda \in \Sigma$ by Proposition \ref{biramp}.
\end{proof}

Let $f \colon Y\dashrightarrow X$ be a birational map of \hyp varieties over $k$.
Let $J_{f} \subset J$ be the subset consisting of indices $j$ such that their exists $g_{j} \in \Bir (X)$ such that $g_{j} (\Pi_{j})$ is contained in $f_{\ast}(\nef_{Y})$.

\begin{lemma}\label{birJ}
Let $f_{i} \colon Y_{i} \dashrightarrow X$ ($i=1,2$) be a birational map of \hyp varieties over $k$. Then the following hold.
\begin{enumerate}
\item
$Y_{1}$ is isomorphic to $Y_{2}$ over $k$ if and only if $J_{f_{1}} = J_{f_{2}}$.
\item
If $J_{f_{1}} \cap J_{f_{2}}$ is nonempty, then we have $J_{f_{1}} = J_{f_{2}}$.
\end{enumerate}
\end{lemma}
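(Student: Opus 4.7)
The plan is to prove part (2) first and use it to derive part (1); the unifying theme is that the set $J_{f_i}$ is a combinatorial shadow of the chamber $f_{i,\ast}(\nef_{Y_i})$, and any element of $\Bir(X)$ that carries one chamber to the other will automatically match the two index sets.

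The easy direction $(1) \Rightarrow$ is immediate: given an isomorphism $\phi \colon Y_1 \to Y_2$ over $k$, the element $h = f_2 \circ \phi \circ f_1^{-1} \in \Bir(X)$ satisfies $h \cdot f_{1,\ast}(\nef_{Y_1}) = f_{2,\ast}(\nef_{Y_2})$. For any witness $g_j \in \Bir(X)$ of $j \in J_{f_1}$, the composition $h g_j$ witnesses $j \in J_{f_2}$, and vice versa, so $J_{f_1} = J_{f_2}$.

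For part (2), I would pick $j \in J_{f_1} \cap J_{f_2}$ with witnesses $g_i(\Pi_j) \subset f_{i,\ast}(\nef_{Y_i})$, and show that $h := g_2 g_1^{-1} \in \Bir(X)$ maps the chamber $f_{1,\ast}(\amp_{Y_1})$ onto $f_{2,\ast}(\amp_{Y_2})$. The key inputs are that by Lemma \ref{lemnef} the interior $g_i(\Pi_j)^0$ is an open subset of $f_{i,\ast}(\amp_{Y_i})$, and that $\Bir(X)$ acts on $\Lambda_{X,\R}$ by linear isometries preserving $\Sigma$ (since $h_\ast(f_\ast(e)) = (h \circ f)_\ast(e)$ is again of the form defining $\Sigma$), hence permutes the connected components of $\ba_X$ described in Proposition \ref{biramp}. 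Because $h$ carries a nonempty open piece of one such component onto a nonempty open piece of another, it must swap the entire components, giving the desired chamber equality. The identification $J_{f_1} = J_{f_2}$ then follows exactly as in the $(1) \Rightarrow$ step.

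For $(1) \Leftarrow$, I would first check that $J_{f_i}$ is always nonempty: any rational interior point of $f_{i,\ast}(\amp_{Y_i})$ lies in some translate $g(\Pi_j)$ by the fundamental domain property from Theorem \ref{bircone} and the decomposition $\Pi = \bigcup_j \Pi_j$, and Lemma \ref{lemnef} then forces $g(\Pi_j) \subset f_{i,\ast}(\nef_{Y_i})$. Picking any $j$ in the (now nonempty) intersection $J_{f_1} \cap J_{f_2}$ and applying the argument of (2) yields $h \in \Bir(X)$ with $h \cdot f_{1,\ast}(\amp_{Y_1}) = f_{2,\ast}(\amp_{Y_2})$, so that $\psi = f_2^{-1} \circ h \circ f_1 \colon Y_1 \dashrightarrow Y_2$ is a small birational map (Remark \ref{remsmall}) sending $\amp_{Y_1}$ bijectively onto $\amp_{Y_2}$, and is therefore an isomorphism over $k$. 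The main obstacle is the verification that $h = g_2 g_1^{-1}$ genuinely permutes chambers of $\ba_X$; once the $\Bir(X)$-invariance of $\Sigma$ is in place, the remainder is bookkeeping.
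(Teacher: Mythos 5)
Your proposal is correct and follows essentially the same route as the paper (which itself reproduces the argument of Markman--Yoshioka): the implication from an isomorphism to $J_{f_1}=J_{f_2}$ is identical, and the converse direction rests on the same two inputs, namely Lemma \ref{lemnef}/Proposition \ref{biramp} identifying $g_i(\Pi_j)^{0}$ with an open piece of $f_{i,\ast}(\amp_{Y_i})$, and the fact that a birational map of \hyp varieties carrying an ample class to an ample class is an isomorphism. The only substantive difference is that you explicitly verify $J_{f_i}\neq\emptyset$ before deducing the ``if'' direction of (1) from (2) --- a point the paper's proof leaves implicit but which is genuinely needed there, so this is a welcome addition rather than a deviation.
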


\begin{proof}
The proof is the same as in \cite[Lemma 2.4]{Markman2015}, but we include the proof.
For (1), first suppose that there exists an isomorphism $\phi \colon Y_{1} \rightarrow Y_{2}$.
Then $\psi \coloneqq f_{2}\phi f_{1}^{-1}$ sends $f_{1}(\nef_{Y_{1}})$ to $f_{2}(\nef_{Y_{2}})$.
Thus for any $g \in \Bir (X)$ and $j\in J$, $g (\Pi_{j}) \subset f_{1}(\nef_{Y_{1}})$ if and only if $\psi g (\Pi_{j}) \subset f_{2} (\nef_{Y_{2}})$.
Thus we have $J_{f_{1}} = J_{f_{2}}$.
For the remaining part of (1) and (2), suppose that there exists an element $j \in J_{f_{1}} \cap J_{f_{2}}$.
Then there exist $h_{i} \in \Bir (X)$ such that $h_{i} (\Pi_{j}) \subset f_{i} (\nef_{Y_{i}})$.
Thus $f_{2}^{-1}h_{2}h_{1}^{-1}f_{1}$ maps some ample class to an ample class.
Therefore, $f_{2}^{-1}h_{2}h_{1}^{-1}f_{1}$ is an isomorphism, and it finishes the proof.
\end{proof}

For a birational map $f \colon Y \dashrightarrow X$ of \hyp varieties, we put $J_{Y} \coloneqq J_{f}$ which depends only on an isomorphism class of $Y$ by Lemma \ref{birJ},

\begin{lemma}
\label{lemautcoset}
For any $j\in J_{X}$, the set 
\[
\{
g \in \Bir(X) \mid g(\Pi_{j}) \subset \nef_{X}
\}
\]
is an $\Aut (X)$-coset, i.e.\ it is equal to $\Aut (X) g_{j}$ for some $g_{j} \in \Bir (X)$.
Moreover, $\nef_{X}^{+}$ is the union of $\Aut (X)$-translates of finitely many rational polyhedral subcones $g_{j}(\Pi_{j})$ $(j\in J_{X})$.
\end{lemma}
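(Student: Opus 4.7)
The plan is to adapt Markman's argument in \cite[Theorem 2.5]{Markman2015} to the non-closed base field setting, using Lemma \ref{lemnef}, Proposition \ref{biramp}, and Theorem \ref{bircone} as the main inputs. Two geometric facts will drive the proof: first, the interior of each $\Pi_j$ is a full-dimensional open set disjoint from every hyperplane $\lambda^{\perp}$ with $\lambda \in \Sigma$ (by construction of the subdivision), so it is contained in a single connected component of $\ba_{X}$; second, by Remark \ref{remsmall} together with the proof of Proposition \ref{indexbir}, a birational self-map of $X$ that sends an ample class to an ample class is automatically an automorphism.

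For the coset assertion, I would fix $j \in J_{X}$ and suppose $g_{1}, g_{2} \in \Bir(X)$ both satisfy $g_{i}(\Pi_{j}) \subset \nef_{X}$. The first observation above implies that the interior of each $g_{i}(\Pi_{j})$ lies in a single connected component of $\ba_{X}$, and being contained in $\nef_{X}$ this component must equal $\amp_{X}$. Hence $g_{i}(\Pi_{j})$ meets $\amp_{X}$, so Lemma \ref{lemnef} applied with $f = \mathrm{id}$ and $g = g_{i}$ gives $g_{i}(\Pi_{j}) = \nef_{X} \cap g_{i}(\Pi)$. The element $h := g_{2} g_{1}^{-1} \in \Bir(X)$ then carries an interior point of $g_{1}(\Pi_{j}) \subset \amp_{X}$ to one of $g_{2}(\Pi_{j}) \subset \amp_{X}$, so $h$ is an automorphism by the second observation and $g_{2} \in \Aut(X) g_{1}$. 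The reverse containment $\Aut(X) g_{j} \subset \{g \mid g(\Pi_{j}) \subset \nef_{X}\}$ is immediate since $\Aut(X)$ preserves $\nef_{X}$.

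For the covering statement, the inclusion $\bigcup_{j \in J_{X},\, \phi \in \Aut(X)} \phi g_{j}(\Pi_{j}) \subset \nef_{X}^{+}$ is clear, because each $g_{j}(\Pi_{j})$ is a rational polyhedral cone in $\nef_{X}$, hence lies in $\nef_{X}^{+}$, and $\Aut(X)$ preserves $\nef_{X}$. For the opposite inclusion, I take $x \in \nef_{X}^{+}$ and a rational ample class $a$, and set $x_{\epsilon} := x + \epsilon a$ for $\epsilon > 0$; then $x_{\epsilon} \in \amp_{X} \cap \nef_{X}^{+}$. Theorem \ref{bircone} provides $\gamma_{\epsilon} \in \Bir(X)$ and $j_{\epsilon} \in J$ with $\gamma_{\epsilon}(x_{\epsilon}) \in \Pi_{j_{\epsilon}}$. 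Since $x_{\epsilon} \in \amp_{X} \cap \gamma_{\epsilon}^{-1}(\Pi_{j_{\epsilon}})$, Lemma \ref{lemnef} forces $\gamma_{\epsilon}^{-1}(\Pi_{j_{\epsilon}}) \subset \nef_{X}$, so $j_{\epsilon} \in J_{X}$, and the first part gives $\gamma_{\epsilon}^{-1} = \phi_{\epsilon} g_{j_{\epsilon}}$ for some $\phi_{\epsilon} \in \Aut(X)$, yielding $x_{\epsilon} \in \phi_{\epsilon} g_{j_{\epsilon}}(\Pi_{j_{\epsilon}})$.

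The main obstacle is the limit $\epsilon \to 0$. Finiteness of $J_{X} \subset J$ already lets me pass to a subsequence with $j_{\epsilon} = j_{0}$ constant; the delicate point is stabilizing $\phi_{\epsilon} \in \Aut(X)$. I would argue that only finitely many $\Aut(X)$-translates of the rational polyhedral cone $g_{j_{0}}(\Pi_{j_{0}})$ meet a fixed compact neighborhood of $x$ in $\pos_{X}$, by invoking local finiteness of the MBM hyperplane arrangement inside $\pos_{X}$ as used in the proof of Proposition \ref{fundweylrational} (see \cite[Chapter 8, Remark 2.3]{Huybrechts2016}). Once a further subsequence has $\phi_{\epsilon} = \phi_{0}$ constant, the closed cone $\phi_{0} g_{j_{0}}(\Pi_{j_{0}})$ contains every $x_{\epsilon}$ and hence their limit $x$, completing the argument.
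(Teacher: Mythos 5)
The coset half of your argument is correct and is essentially the paper's own proof (following Markman--Yoshioka): an interior point of $\Pi_{j}$ avoids every wall $\lambda^{\perp}$, so its images under $g_{1}$ and $g_{2}$ are ample, and a birational self-map carrying an ample class to an ample class is an automorphism; non-emptiness of the set is just the definition of $j\in J_{X}$. No objection there.

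The covering half is where there is a genuine gap. The paper disposes of it in one line --- by Lemma \ref{lemnef} every translate $g(\Pi_{j})$ meeting $\amp_{X}$ equals $\nef_{X}\cap g(\Pi)$, these cover $\nef_{X}^{+}$, and the coset assertion converts them into $\Aut(X)$-translates of the $g_{j}(\Pi_{j})$ --- whereas you run a perturbation $x_{\epsilon}=x+\epsilon a$ and pass to the limit. The perturbation itself is fine, but the limiting step fails as justified, for two reasons. First, the finiteness of the $\Aut(X)$-translates of $g_{j_{0}}(\Pi_{j_{0}})$ meeting a compact set does \emph{not} follow from local finiteness of the MBM wall arrangement: all of these translates lie inside the single chamber $\nef_{X}$ of that arrangement, and they are separated from one another by walls of $\Gamma_{\Bir(X)}$-translates of the Dirichlet domain $\Pi$, which are not of the form $\lambda^{\perp}$ with $\lambda\in\Sigma$. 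The fact you need is the Siegel property of the arithmetic action (a rational polyhedral cone in $\pos_{X}^{+}$ meets only finitely many translates of $\Pi$), which is the ingredient behind \cite[Lemma 2.2]{Totaro2010} used in the proof of Theorem \ref{automcone}\,(2). Second, and more seriously, the points $x\in\nef_{X}^{+}$ that actually require the limit argument lie on $\partial\amp_{X}$ and may be isotropic, hence on $\partial\pos_{X}$; such an $x$ has no compact neighbourhood in $\pos_{X}$ at all, so your argument never reaches exactly the boundary points for which the ``$+$'' in $\nef_{X}^{+}$ is introduced. Both defects are repaired by replacing the compactness claim with the Siegel property applied to the rational polyhedral cone spanned by $a$ and finitely many rational nef classes whose convex hull contains $x$: finitely many closed translates $\gamma(\Pi)$ cover the segment $[x,a]$, so one of them contains $x$ together with ample points arbitrarily close to it, and Lemma \ref{lemnef} then identifies the corresponding $\gamma(\Pi_{j})$ with $\nef_{X}\cap\gamma(\Pi)\ni x$, after which your coset step applies verbatim.
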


\begin{proof}
The proof is the same as in \cite[Lemma 2.6, Corollary 2.7]{Markman2015}, but we include the proof.
Suppose that $g, h \in \Bir (X)$ satisfy $g(\Pi_{j}),h(\Pi_{j}) \subset \nef_{X}$.
Take a class $\alpha$ in the interior of $\Pi_{j}$.
Then $g(\alpha)$ and $h(\alpha)$ is an ample class, so $gh^{-1} \in \Aut (X)$. Thus we have the first assertion.
By Lemma \ref{lemnef}, the cone $\nef_{Y}^{+}$ is the union of $\Bir (X)$-translates of the $\Pi_{j}$ intersecting its interior.
By the first assertion, this is a union of $\Aut (X)$-translate of $g_{j} (\Pi_{j})$. Therefore, we have the second assertion.
\end{proof}

Now we have the following main theorem of this subsection.

\begin{theorem}\label{automcone}
Let $k$ be a field of characteristic $0$.
Let $X$ be am \hyp variety over $k$.
Suppose that $b_{2}(\barX) \geq 5$.
Then the following hold.
\begin{enumerate}
\item
The set $\mathfrak{B}_{X}$ of $k$-isomorphism classes of \hyp varieties in the $k$-birational class of $X$ is finite.
\item
There exists a rational polyhedral cone $D$ in $\nef_{X}^{+}$ which is a fundamental domain for the action of $\Gamma_{\Aut (X)}$. Here, $\Gamma_{\Aut(X)}$ means the image of the automorphism group of $X$ via the natural action $\Aut (X) \rightarrow \algO (\Lambda_{X, \R}).$ 
\end{enumerate}
\end{theorem}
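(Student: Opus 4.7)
The plan is to derive Theorem \ref{automcone} essentially formally from Lemmas \ref{lemnef}, \ref{birJ}, and \ref{lemautcoset}, without needing any further geometric input.

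For part (1), I would show that the assignment $Y \mapsto J_Y$ from Lemma \ref{birJ} defines an injection from $\mathfrak{B}_X$ into the (finite) power set of the finite set $J$. The nontrivial step is verifying that $J_Y$ is nonempty for every $Y \in \mathfrak{B}_X$: pick a birational map $f\colon Y \dashrightarrow X$ and a rational ample class $\alpha_Y$ on $Y$, so that $f_{\ast}(\alpha_Y) \in f_{\ast}(\amp_Y) \subset \ba_X \cap \mv_X^{+}$. Since $\Pi$ is a fundamental domain for $\Gamma_{\Bir(X)}$ on $\mv_X^{+}$ by Theorem \ref{bircone}, there exists $g \in \Bir(X)$ with $g(f_{\ast}(\alpha_Y)) \in \Pi$, and because $\ba_X$ is $\Bir(X)$-stable and avoids every wall $\lambda^{\perp}$ ($\lambda \in \Sigma$) used to subdivide $\Pi$ into the chambers $\Pi_j$, this translate lands in the interior of some $\Pi_j$. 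By Lemma \ref{lemnef} we then get $g(\Pi_j) \subseteq f_{\ast}(\nef_Y)$, so $j \in J_f = J_Y$. Lemma \ref{birJ}(1) gives injectivity of $Y \mapsto J_Y$ on $\mathfrak{B}_X$, while Lemma \ref{birJ}(2) gives that distinct $J_Y$'s are disjoint subsets of $J$, whence $\#\mathfrak{B}_X \leq \# J < \infty$.

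For part (2), I would take
\[
D \coloneqq \bigcup_{j \in J_X} g_j(\Pi_j),
\]
a finite union of rational polyhedral subcones of $\nef_X^{+}$. Lemma \ref{lemautcoset} immediately yields $\Aut(X) \cdot D = \nef_X^{+}$, so $D$ covers. To verify that distinct $\Gamma_{\Aut(X)}$-translates of $D$ meet only on their boundaries, suppose $\alpha \in \Aut(X)$ and $j, k \in J_X$ satisfy $\alpha(g_j(\Pi_j)^{\circ}) \cap g_k(\Pi_k)^{\circ} \neq \emptyset$, and pick a point $y$ in this intersection. Then the element $h \coloneqq g_j^{-1}\alpha^{-1}g_k \in \Bir(X)$ sends $g_k^{-1}(y) \in \Pi_k^{\circ} \subset \Pi^{\circ}$ to $g_j^{-1}\alpha^{-1}(y) \in \Pi_j^{\circ} \subset \Pi^{\circ}$. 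Because $\Pi$ is a strict fundamental domain for $\Gamma_{\Bir(X)}$, $h$ must act trivially on $\Lambda_{X,\R}$; the two points therefore coincide, and disjointness of the interiors $\Pi_j^{\circ}$ inside $\Pi$ forces $j = k$. Then $h = g_j^{-1}\alpha^{-1}g_j$ trivial gives $\alpha$ trivial in $\Gamma_{\Aut(X)}$, as required.

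The mildly delicate points are (i) the nonemptiness of $J_Y$, which relies on the fact that ample classes on any birational model lie in the interior of a chamber of $\ba_X$ and hence avoid all walls $\lambda^{\perp}$ subdividing $\Pi$, and (ii) the fundamental-domain property for $D$, which reduces via the $\Aut(X)$-coset description of Lemma \ref{lemautcoset} to the strict fundamental-domain property of $\Pi$ from Theorem \ref{bircone}. Apart from this bookkeeping, the argument is essentially formal; the main geometric content has already been packaged into Lemmas \ref{lemnef}, \ref{birJ}, and \ref{lemautcoset}.
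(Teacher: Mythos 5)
Your part (1) follows the paper's route: the paper simply cites Lemma \ref{birJ} for the injectivity of $Y \mapsto J_Y$, and your explicit check that $J_Y \neq \emptyset$ is a worthwhile addition, since Lemma \ref{birJ}(1) is only proved under the hypothesis that $J_{f_1}\cap J_{f_2}$ is nonempty. One bookkeeping slip: from $g(f_{\ast}(\alpha_Y)) \in \Pi_j$ you conclude that $\Pi_j$ meets $(g\circ f)_{\ast}(\amp_Y)$, so Lemma \ref{lemnef} gives $\Pi_j \subseteq g\bigl(f_{\ast}(\nef_Y)\bigr)$, i.e.\ the witness for $j \in J_f$ is $g^{-1}$, not $g$. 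This is harmless.

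Part (2) has a genuine gap. The set $D = \bigcup_{j \in J_X} g_j(\Pi_j)$ is a finite union of rational polyhedral cones, but the theorem asserts the existence of a rational polyhedral \emph{cone}, i.e.\ a convex cone spanned by finitely many rational vectors. Your union has no reason to be convex, nor even connected: each $g_j(\Pi_j)$ is one representative of an $\Aut(X)$-orbit of chambers in $\nef_X^{+}$, chosen via an arbitrary element $g_j$ of the relevant $\Aut(X)$-coset, and nothing forces distinct representatives to be adjacent. (A secondary, repairable point: disjointness of $\alpha(g_j(\Pi_j)^{\circ})$ and $g_k(\Pi_k)^{\circ}$ for all $j,k$ does not literally give $\alpha(D^{\circ}) \cap D^{\circ} = \emptyset$, because the interior of a finite union can exceed the union of the interiors; the difference lies in finitely many hyperplanes, so this can be patched.) What you have actually established --- that $\nef_X^{+}$ is the union of $\Gamma_{\Aut(X)}$-translates of finitely many rational polyhedral cones --- is precisely the hypothesis of the Looijenga--Totaro argument, and that is how the paper concludes: it fixes an ample class $y$ and takes the Dirichlet domain $D = \{x \in \nef_X \mid (\gamma x, y) \geq (x,y) \text{ for all } \gamma \in \Gamma_{\Aut(X)}\}$, which is rational polyhedral by \cite[Lemma 2.2]{Totaro2010} combined with Lemma \ref{lemautcoset}. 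So your proof of (2) needs this one additional step to deliver the statement as written.
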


\begin{proof}
(1) follows from Lemma \ref{birJ}, since the map
$\mathfrak{B}_{X} \rightarrow 2^{J}$,\ $Y \mapsto J_{Y}$ induces an injection.

(2) 
We choose an ample class $y \in \pos_{X} \cap \Lambda.$ 
We put
\[
D \coloneqq 
\{
x \in \nef_{X} \mid (\gamma x, y) \geq (x,y) \textup{ for all } \gamma \in \Gamma_{\Aut (X)}
\}.
\]
Then $D$ is a rational polyhedral fundamental domain by the proof of \cite[Lemma 2.2]{Totaro2010} and Lemma \ref{lemautcoset}.
\end{proof}

\begin{remark}\label{remautomcone}
Theorem \ref{automcone} (2) also holds for the case where $b_{2} (\barX) \leq 4$.
Indeed, if $\Lambda_{X}$ is of rank $1$, the cone conjecture is trivial.
On the other hand, if $\Lambda_{X}$ is of rank $2$, Then $\Lambda_{\barX} = \Lambda_{X}$, thus by the argument below \cite[Theorem 5.6]{Amerik2017}, the cone conjecture also holds.
\end{remark}
\subsection{Proof of the main theorem}
First, we prove an analogue of Lemma \ref{weylK3}.

\begin{lemma}
\label{lemmamainrefined}
Let $k$ be a field of characteristic $0$, and $X$ an \hyp variety over $k$ with $b_{2}(X_{\overline{k}}) \geq 5$.
Then there exists a positive integer $d$ 
such that there exists a polarization $L_{Y}$ on $Y$ with $(L_{Y},L_{Y}) = d$ for  any \hyp variety $Y$ satisfying the following conditions.
\begin{enumerate}
\item
There exists a finitely generated subfield $K \subset k$ field embeddings $\iota_{1}, \iota_{2}\colon K \hookrightarrow \C$, and \hyp variety $X', Y'$ over $K$ with $X'_{k} \simeq X$ and $Y'_{k} \simeq Y$, such that the complex manifolds $X_{\iota_{1},\C}$ and $Y_{\iota_{2},\C}$ are homeomorphic.
\item
There exits an isometry $\Phi\colon \Lambda_{X_{\overline{k}}} \simeq \Lambda_{Y_{\overline{k}}}$ which induces $\Lambda_{X} \simeq \Lambda_{Y}$. 
\end{enumerate}
\end{lemma}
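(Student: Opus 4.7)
The statement is the \hyp analogue of Lemma \ref{weylK3}, and my plan is to follow that strategy, using the cone conjectures in Theorems \ref{bircone} and \ref{automcone} together with Proposition \ref{fundweylrational} as substitutes for the elementary cone structure of K3 surfaces.

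First I reduce to finitely many ``lattice types''. By condition (1), the complex models $X_{\iota_1,\C}$ and $Y_{\iota_2,\C}$ are homeomorphic, so $H^2(X_{\iota_1,\C},\Z)$ and $H^2(Y_{\iota_2,\C},\Z)$ are isometric with their Beauville--Bogomolov forms; transferring via the $\ell$-adic comparison (see Definition \ref{defhyp}) identifies $\Lambda_{Y_{\overline{k}}}$ with $\Lambda_{X_{\overline{k}}}$ as an abstract lattice. Since $\Gal(\overline{k}/k)$ acts on this finitely generated lattice through a finite quotient, \cite[Section 5, (a)]{Borel1963} ensures that only finitely many Galois module structures occur. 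It therefore suffices to show that for each of these finitely many types $\tau$, there is a degree $d_{\tau}$ such that every $Y$ of type $\tau$ admits a polarization of degree $d_{\tau}$; a suitable common multiple then yields the required uniform $d$.

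Fix a type $\tau$ with representative $X$. Given $Y$ of type $\tau$ together with the isometry $\Phi\colon \Lambda_{X_{\overline{k}}}\simeq \Lambda_{Y_{\overline{k}}}$ restricting to $\Lambda_X \simeq \Lambda_Y$ supplied by condition (2), I pick an ample class $L \in \Lambda_X$ with $(L,L)=d_{\tau}$. Composing $\Phi$ with $-\mathrm{id}$ if necessary, we may assume $\Phi(L)\in \pos_Y\cap \Lambda_Y$. By Proposition \ref{fundweylrational}, there exists $r\in R_Y$ with $r\Phi(L)\in \overline{\mv}_Y\cap \pos_Y$: a degree-$d_{\tau}$ class in the movable cone of $Y$.

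The main obstacle is promoting this movable class to an actual polarization on $Y$. For K3 surfaces $\overline{\mv}=\nef$, so the analogous step was automatic; for \hyp varieties, however, the movable cone decomposes into chambers corresponding to ample cones of birational models (Proposition \ref{biramp}). Since $L$ is ample on $X$, it has strictly positive pairing with every MBM class on $X_{\overline{k}}$; condition (1) combined with the BB-compatibility of the complex homeomorphism implies that $\Phi$ preserves MBM classes up to sign, and this property is preserved by the reflection $r$. Hence $r\Phi(L)\in \ba_Y$, so by Proposition \ref{biramp} it lies in the ample cone $f_\ast(\amp_{Y'})$ of some $k$-birational model $f\colon Y'\dashrightarrow Y$. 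By Theorem \ref{automcone}(1) the set of such $k$-birational models is finite, and each $f$ is small (Remark \ref{remsmall}), inducing an isomorphism of Picard lattices. A case analysis over these finitely many possibilities, combined with the freedom to scale polarizations by positive integers, then produces a polarization of the common degree $d$ on $Y$ itself.
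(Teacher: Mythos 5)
Your reduction to finitely many lattice types and the first step (transporting an ample class of $X$ into $\overline{\mv}_Y \cap \pos_Y$ via $\Phi$ and $R_Y$) are reasonable, but the argument breaks down exactly at the point you yourself identify as the main obstacle, and the fix you propose does not work. First, the claim that ``condition (1) combined with the BB-compatibility of the complex homeomorphism implies that $\Phi$ preserves MBM classes up to sign'' is unjustified: the isometry $\Phi$ of condition (2) is an arbitrary abstract lattice isometry, logically independent of the homeomorphism in condition (1). The only role of condition (1) is to supply, via \cite[Theorem 5.3]{Amerik2017} and \cite[Corollary 1.4]{Amerik2016}, a uniform bound $N$ on the Beauville--Bogomolov square of primitive MBM classes on $Y_{\overline{k}}$; it says nothing about how $\Phi$ interacts with MBM classes, so there is no reason $r\Phi(L)$ avoids the MBM walls of $Y$, let alone lands in the nef chamber. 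Second, even granting $r\Phi(L)\in\ba_Y$, Proposition \ref{biramp} only places it in the ample cone of some birational model $Y'$ of $Y$; your concluding ``case analysis \dots combined with the freedom to scale'' does not produce a polarization on $Y$ itself. Scaling never moves a class between chambers of the movable cone, and the finiteness of birational models (Theorem \ref{automcone}(1)) by itself gives no uniform control on the degrees of integral points in the nef chamber of $Y$.

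The paper's proof is structured differently precisely to avoid this: it never transports a fixed ample class of $X$. Instead it forms the open set $S\subset\pos_X^{+}$ obtained by deleting all walls $v^{\perp}$ with $(v,v)>-N$ and $v^{\perp}\nsupseteq\Lambda_X$, together with the fixed loci of elements of $\algO^{+}(\Lambda_{X_{\overline{k}}},\Lambda_X)$ acting nontrivially on $\Lambda_X$; it shows this group acts on $\pi_0(S)$ with finitely many orbits (using a rational polyhedral fundamental domain, \cite[Proposition 3.4]{Markman2015} and local finiteness of the walls), and chooses an integral class $L_i$ of degree $d_i$ in each orbit representative $V_i$. Since the boundary of $\Phi^{-1}(\amp_Y)$ in $\pos_X$ is contained in that union of walls (by Propositions \ref{fundweylrational} and \ref{biramp}), the open cone $\Phi^{-1}(\amp_Y)$ contains an entire component $V$ of $S$; writing $gV=V_i$, the class $\Phi(g^{-1}L_i)$ is ample on $Y$ of degree $d_i\le d$. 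To salvage your approach you would need to replace the single transported class $L$ by this chamber-orbit argument, which is the actual content of the lemma.
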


\begin{proof}
By \cite[Theorem 5.3]{Amerik2017} and \cite[Corollary 1.4]{Amerik2016}, there exists a positive number $N$ such that any primitive MBM class on $Y_{\overline{k}}$ has the Beauville--Bogomolov square greater than $-N$, for any \hyp variety $Y$ over $k$ satisfying the condition (1).
We put
 \[
 \algO^{+}(\Lambda_{X_{\overline{k}}}, \Lambda_{X})
 :=
 \{
 g \in \algO^{+}(\Lambda_{X_{\overline{k}}}) \mid g\Lambda_{X} = \Lambda_{X}
\}.
 \] 
 We also put
 \[
 S:= \pos_{X}^{+} \setminus
 \left(
 \left\{
v^\perp \left|
\begin{array}l
 v \in \Lambda_{X_{\overline{k}}},
(v,v)>-N,\\
v^{\perp} \nsupseteq \Lambda_{X}
\end{array}
\right.
\right\}
\cup \bigcup_{\substack{g\in \algO^{+}(\Lambda_{X_{\overline{k}}}, \Lambda_{X})\\
\textup{s.t.\,} 1 \neq \overline{g} \in \algO^{+}(\Lambda_{X})
}} \mathrm{Fix}(g)
\right)
\]
Then there is a natural action $\algO^{+}(\Lambda_{X_{\overline{k}}}, \Lambda_{X})$ on $\pi_{0}(S)$.
We shall see that there exist only finitely many orbits of this action.
Since the image of $\algO^{+}(\Lambda_{X_{\overline{k}}}, \Lambda_{X})$ in $\algO^{+}(\Lambda_{X})$ is of finite index by \cite[Proposition 2.2]{Bright2019}, there exists a rational polyhedral fundamental domain $\mathcal{P} \subset \pos_{X}^{+}$ of the action of $\algO^{+}(\Lambda_{X_{\overline{k}}}, \Lambda_{X})$ on $\Lambda_{X}$.
By \cite[Proposition 3.4]{Markman2015} and \cite[Chapter 8, Remark 2.2]{Huybrechts2016}, 
\[
\mathcal{P} \setminus 
\left(
 \left\{
v^\perp \left|
\begin{array}l
 v \in \Lambda_{X_{\overline{k}}},
(v,v)>-N,\\
v^{\perp} \nsupseteq \Lambda_{X}
\end{array}
\right.
\right\}
\cup \bigcup_{\substack{g\in \algO^{+}(\Lambda_{X_{\overline{k}}}, \Lambda_{X})\\
\textup{s.t.\,}1 \neq \overline{g} \in \algO^{+}(\Lambda_{X})
}} \mathrm{Fix}(g)
\right)
\]
has only finitely many connected components $W_{1}, \ldots, W_{m}$.
There exists a connected component $V_{1}, \ldots, V_{m}$ of $S$ such that $W_{i} \subset V_{i}$ for $1 \leq i \leq m$.
Then we can show that $V_{1}, \ldots, V_{m}$ represent all the $\algO^{+}(\Lambda_{X_{\overline{k}}}, \Lambda_{X})$-orbits of  $\pi_{0}(S)$.
Indeed, for any connected component $V$ of $S$, there exists an element $g \in \algO^{+}(\Lambda_{X_{\overline{k}}})$ such that $gV \cap \mathcal{P}^{\circ} \neq \phi$. We have $gV \cap W_{j} \neq \phi$ for some $j$, and then we have $V_{j} = gV$.

Next, take an element $L_{i} \in V_{i} \cap \Lambda_{X} \cap \pos_{X}$.
We put $d_{i}:= (L_{i},L_{i})$, and $d:= \max_{1\leq i \leq m}d_{i}$.
Take an \hyp variety $Y$ over $k$, with an isometry $\Phi\colon \Lambda_{X_{\overline{k}}} \simeq \Lambda_{Y_{\overline{k}}}$ which induces $\Lambda_{X} \simeq \Lambda_{Y}$.
We may assume that $\Phi (\pos_{X}) = \pos_{Y}$.
By Proposition \ref{fundweylrational}, Definition \ref{defbiramp} and Proposition \ref{biramp}, the boundary $\partial(\Phi^{-1} (\amp_{Y})) \cap \pos_{X}$ is contained in 
\[
\left(
 \left\{
v^\perp \left|
\begin{array}l
 v \in \Lambda_{X_{\overline{k}}},
(v,v)>-N,\\
v^{\perp} \nsupseteq \Lambda_{X}
\end{array}
\right.
\right\}
\cup \bigcup_{\substack{g\in \algO^{+}(\Lambda_{X_{\overline{k}}}, \Lambda_{X})\\
\textup{s.t.\,}1 \neq \overline{g} \in \algO^{+}(\Lambda_{X})
}} \mathrm{Fix}(g)
\right)
\]
Therefore, there exists a connected component $V$ of $S$ such that $V \subset \Phi^{-1} (\amp_{Y})$.
There exists an element $g \in \algO^{+}(\Lambda_{X_{\overline{k}}}, \Lambda_{X})$ and $1\leq i \leq m$ such that $gV =V_{i}$.
Then $L_{i,Y}:=\Phi(g^{-1}L_{i})$ gives an ample line bundle on Y with $(L_{i,Y},L_{i,Y}) =d_{i} \leq d$.
Replacing $d$ by $d!$, it finishes the proof.
\end{proof}

\begin{remark}
\label{remarkmatsushitazhang}
Let $X$ be an \hyp variety over $k$ with a polarization of Beauville--Bogomolov square $d$, and $Y$ an \hyp variety over $k$ such that there exists an isometry $\Lambda_{X_{\overline{k}}} \simeq \Lambda_{Y_{\overline{k}}}$ which induces $\Lambda_{X} \simeq \Lambda_{Y}$.
Then $Y$ does not necessarily admit a polarization of Beauville--Bogomolov square $d$.
However, by using the action of $R_{Y}$ (see Proposition \ref{fundweylrational}), there exists an element $L \in \overline{\mv}_{Y} \cap \pos_{Y}$.
Therefore, by applying \cite[Section 4]{Matsushita2013}, there exists an \hyp variety $Z$ over $k$ which is birational to $Y$, and $Z$ admits a quasi-polarization of Beauville--Bogomolov square $d$.
(Note that though \cite{Matsushita2013} only treats the case of $k=\C$, the same result over general $k$ holds since we can run a similar MMP as in \cite{Matsushita2013} in our case.
Indeed, the cone theorem and the contraction theorem hold by the same proof as in \cite[Theorem 3.7]{Kollar1998}, and the existence of flips can be reduced to $\C$-case. Moreover, the termination of flips also can be proved by the same argument as in \cite[Section 4]{Matsushita2013}, and the irreducible symplecticness of output variety can be checked after the base change to $\C$, by the same argument as in \cite[Section 4]{Matsushita2013}.) 
\end{remark}

\begin{lemma}
\label{lemmamainrefined2}
Let $k$ be a field of characteristic $0$, $k'$ an algebraic field extension of $k$, and $X$ an \hyp variety over $k$ with $b_{2}(X_{\overline{k}}) \geq 5$..
Then there exists a positive number $d$ satisfying the following.
For any \hyp variety $Y$ over $k$ such that $Y_{k'}$ is birational to $X_{k'}$, there exists a polarization of Beauville--Bogomolov square $d$ on $Y$.
\end{lemma}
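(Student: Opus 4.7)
The plan is to reduce to Lemma \ref{lemmamainrefined}. Since $Y_{k'}$ birational to $X_{k'}$ implies $Y_{\overline{k}}$ birational to $X_{\overline{k}}$, and the conclusion becomes stronger for larger $k'$, I may replace $k'$ by $\overline{k}$. Let $d_{0}$ and $L_{X} \in \Lambda_{X}$ be the integer and the polarization of Beauville--Bogomolov square $d_{0}$ produced by Lemma \ref{lemmamainrefined} applied to $X$. The goal is to use $L_{X}$ to build a polarization of bounded Beauville--Bogomolov square on any $Y$ with $Y_{\overline{k}}$ birational to $X_{\overline{k}}$.

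Condition (1) of Lemma \ref{lemmamainrefined} for $Y$ holds: after passing to a common finitely generated subfield of $k$ and fixing complex embeddings, $X_{\C}$ and $Y_{\C}$ are birational \hyp manifolds, hence deformation equivalent by Huybrechts' theorem, and in particular homeomorphic. Condition (2) is more delicate: the small birational map $Y_{\overline{k}} \dashrightarrow X_{\overline{k}}$ (Remark \ref{remsmall}) induces an abstract isometry $\Phi \colon \Lambda_{X_{\overline{k}}} \simeq \Lambda_{Y_{\overline{k}}}$ by pullback, but $\Phi$ need not be $\Gal(\overline{k}/k)$-equivariant, so there is no reason for it to restrict to $\Lambda_{X} \simeq \Lambda_{Y}$.

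I would handle this obstruction as follows. Transport $L_{X}$ to $\Phi(L_{X}) \in \Lambda_{Y_{\overline{k}}}$, which has Beauville--Bogomolov square $d_{0}$ and lies in $\overline{\mv}_{Y_{\overline{k}}} \cap \pos_{Y_{\overline{k}}}$ since the pullback of an ample class along a small birational map is movable. Averaging the $\Gal(\overline{k}/k)$-orbit of $\Phi(L_{X})$ (in the spirit of the proof of Lemma \ref{conerat}) yields a rational class still lying in $\overline{\mv}_{Y_{\overline{k}}} \cap \pos_{Y_{\overline{k}}}$ and invariant under the finite quotient of Galois acting on $\Lambda_{Y_{\overline{k}}}$; clearing denominators by a bounded integer produces an element of $\overline{\mv}_{Y} \cap \pos_{Y}$ whose Beauville--Bogomolov square is a bounded multiple of $d_{0}$. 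If the $R_{Y}$-action of Proposition \ref{fundweylrational} is needed to stay inside $\overline{\mv}_{Y} \cap \pos_{Y}$, it preserves all relevant quantities. Applying Remark \ref{remarkmatsushitazhang} to the resulting class then produces an \hyp variety $Z$ over $k$, birational to $Y$, carrying an actual quasi-polarization of bounded Beauville--Bogomolov square. Finally, the birational and automorphism cone conjectures (Theorems \ref{bircone} and \ref{automcone}) together with the finiteness of the $k$-birational class of $Y$ (Theorem \ref{automcone}(1)) let us convert this quasi-polarization on a birational model back to a genuine polarization on $Y$ of bounded Beauville--Bogomolov square $d$, independent of the particular $Y$.

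The core obstacle is the failure of $\Gal(\overline{k}/k)$-equivariance of the geometric lattice isometry $\Phi$. The averaging, reflection, and Matsushita--Zhang steps sketched above are precisely designed to bridge $\overline{k}$-rational and $k$-rational data, as is characteristic of Section \ref{Hyperkahlervarieties}; the main bookkeeping is to track how Beauville--Bogomolov squares scale under averaging over a Galois orbit and under MMP transformations, so that a single uniform $d$ serves for every $Y$ in the hypothesis.
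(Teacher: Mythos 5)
Your diagnosis of the difficulty is exactly right: the isometry $\Phi\colon \Lambda_{X_{\overline{k}}}\simeq\Lambda_{Y_{\overline{k}}}$ coming from the birational map need not be Galois-equivariant, so condition (2) of Lemma \ref{lemmamainrefined} is not automatic, while condition (1) does follow from Huybrechts' theorem that birational \hyp manifolds are deformation equivalent. However, your proposed repair has a genuine gap at the averaging step. If $v=\Phi(L_X)$ and $O$ is its Galois orbit, then the Beauville--Bogomolov square of $\sum_{\sigma}\sigma(v)$ is $\sum_{\sigma,\tau}(\sigma v,\tau v)$, and while the off-diagonal terms are nonnegative (all conjugates lie in $\overline{\pos}$), they are in no way bounded in terms of $d_0=(v,v)$ and the orbit size. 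Concretely, in $U\oplus\langle 2\rangle$ with the order-two isometry $g$ swapping the isotropic basis vectors $e,f$ of $U$, the vector $v=ne+h$ satisfies $(v,v)=2$ but $(v,gv)=n^2+2$, so $(v+gv,v+gv)=2n^2+8$ is unbounded. Nothing in your argument excludes such configurations as $Y$ varies, so the claim that the averaged class has ``Beauville--Bogomolov square a bounded multiple of $d_0$'' fails. The subsequent step is also unjustified: passing through Remark \ref{remarkmatsushitazhang} yields only a quasi-polarization on some birational model $Z$ of $Y$, and converting that into a polarization on $Y$ itself of uniformly bounded square is essentially the content of Lemma \ref{lemmamainrefined} (the chamber-counting argument with the set $S$); the finiteness of the $k$-birational class of $Y$ from Theorem \ref{automcone}(1) is a finiteness for each fixed $Y$ and carries no uniform control of degrees across the family.

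The paper resolves the non-equivariance of $\Phi$ in a different and much shorter way: by \cite[Section 5, (a)]{Borel1963} there are only finitely many conjugacy classes of actions of $\Gal(\overline{k}/k)$ on the fixed lattice $\Lambda_{X_{\overline{k}}}$, hence only finitely many isomorphism classes of $\Gal(\overline{k}/k)$-$\Z$-lattices $\Lambda_{Y_{\overline{k}}}$ can occur. Choosing a representative $Y_0$ for each realized class, any other $Y$ in that class admits a \emph{Galois-equivariant} isometry $\Lambda_{Y_{0,\overline{k}}}\simeq\Lambda_{Y_{\overline{k}}}$, which does restrict to $\Lambda_{Y_0}\simeq\Lambda_Y$; condition (1) holds by \cite[Theorem 4.6]{Huybrechts1999}, so Lemma \ref{lemmamainrefined} applies to each of the finitely many representatives and one takes a common bound. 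If you want to salvage your approach, you should replace the averaging by this finiteness argument on Galois-lattice structures.
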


\begin{proof}
We may assume that $k'$ is an algebraic closure of $k$.
By the assumption, we have a $\Z$-lattice isometry between $\Lambda_{X_{k'}}$ and $\Lambda_{Y_{k'}}$. 
As in the argument in Lemma \ref{weylK3}, by \cite[Section 5, (a)]{Borel1963}, the candidates of 
isomorphism classes of $\Gal(k'/k)$-$\Z$-lattice $\Lambda_{Y_{k'}}$ is only finitely many.
Therefore, the desired statement follows from Lemma \ref{lemmamainrefined} and \cite[Theorem 4.6]{Huybrechts1999}.
\end{proof}

Next, we shall prove the following analogue of Lemma \ref{finquasipol} as a Corollary of cone conjectures.

\begin{lemma}\label{hypfinquasipol}
Let $k$ be a field of characteristic $0$.
Let $X$ be an \hyp variety over $k$.
Fix a positive integer $d$.
Then the set of quasi-polarizations on $X$ of Beauville--Bogomolov square $d$ modulo $\Aut (X)$ is a finite set.
\end{lemma}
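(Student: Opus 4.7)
The plan is to deduce this directly from the automorphism cone conjecture for $X$ (Theorem \ref{automcone}(2), supplemented by Remark \ref{remautomcone}) combined with the general lattice-theoretic finiteness of Lemma \ref{finquasipol}, in complete analogy with the K3 argument used to deduce the second part of Lemma \ref{finquasipol} from Theorem \ref{coneK3}.

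First I would produce a rational polyhedral fundamental domain. By Theorem \ref{automcone}(2) when $b_{2}(\barX) \geq 5$, or by Remark \ref{remautomcone} otherwise, there exists a rational polyhedral cone $D \subset \nef_{X}^{+}$ which is a fundamental domain for the action of $\Gamma_{\Aut(X)}$ on $\nef_{X}^{+}$. Next I would check that every quasi-polarization class lies in $\nef_{X}^{+}$. Given a quasi-polarization $L \in \Pic_{X/k}(k)$, its class in $\Lambda_{X}$ is nef; since nef classes on an \hyp variety satisfy $q(\cdot,\cdot) \geq 0$, the Fujiki relation $L^{2n} = \beta_{X}^{-1} q(L,L)^{n}$ forces $q(L,L) > 0$ because $L$ is big. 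Hence $[L]$ is an integral class in $\nef_{X} \cap \pos_{X} \subset \nef_{X} \cap \Lambda_{X,\Q}$, which is contained in $\nef_{X}^{+}$ by definition of the latter as the convex hull of $\nef_{X} \cap \Lambda_{X,\Q}$.

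Third, by the fundamental domain property, every quasi-polarization class $[L]$ is $\Aut(X)$-equivalent to a class in $D$. Because $H^{1}(X,\oo_{X}) = 0$ implies $\Pic^{0}_{X/k} = 0$, and the Picard group of an \hyp variety is torsion-free, the natural map $\Pic_{X/k}(k) \to \Lambda_{X}$ is injective, so the finiteness of quasi-polarizations of Beauville--Bogomolov square $d$ modulo $\Aut(X)$ is equivalent to the finiteness of integral classes in $D$ of Beauville--Bogomolov square $d$. Finally, since the Beauville--Bogomolov form on $\Lambda_{X}$ has signature $(1,\rho-1)$ (Hodge index applied to $NS \subset H^{1,1}$), Lemma \ref{finquasipol} applies with $\Lambda = \Lambda_{X}$ and $\Pi = D$ and yields that $D$ contains only finitely many integral elements of Beauville--Bogomolov square $d$.

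The only genuinely delicate point is the existence of the rational polyhedral fundamental domain $D$, which is exactly Theorem \ref{automcone}(2); once this is granted, the argument is a near-verbatim transcription of the K3 argument in Lemma \ref{finquasipol}, the only subtlety being that one must verify the correct signature of the Beauville--Bogomolov form and the Fujiki-type equivalence between bigness and positivity of $q$ for nef classes.
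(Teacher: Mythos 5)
Your proposal is correct and follows exactly the paper's route: the paper's proof is the one-line observation that the statement follows from the automorphism cone conjecture (Theorem \ref{automcone}(2), with Remark \ref{remautomcone} covering $b_{2}(\barX)\leq 4$) combined with the same lattice-theoretic argument as in Lemma \ref{finquasipol}. The extra details you supply (quasi-polarizations lying in $\nef_{X}^{+}$ via the Fujiki relation, and the signature $(1,\rho-1)$ needed to invoke Lemma \ref{finquasipol}) are exactly the points the paper leaves implicit.
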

\begin{proof}
This follows from the automorphism cone conjecture and the same argument as in Lemma \ref{finquasipol}.
\end{proof}

The following lemma is used for treating $b_{2} (\barX) \leq 4$.

\begin{lemma}
\label{lemmapic2}
Let $k$ be a  field of characteristic $0$.
Let $X$ be an \hyp variety over $k$.
Suppose that the rank of $\Lambda_{\barX}$ is $2$.
Then the automorphism group $\Aut (X)$ is either a finite group or an almost finite group of rank $1$ (see Definition \ref{almab}).
\end{lemma}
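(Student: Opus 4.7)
The plan is to apply Proposition \ref{indexbir} to reduce the structure of $\Aut(X)$ to that of a discrete subgroup of the orthogonal group of a rank-$\leq 2$ lattice, and then exploit the fact that such orthogonal groups are either finite or virtually $\Z$.

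First I would recall that by Proposition \ref{indexbir} the natural action
\[
\rho\colon \Bir(X)\ltimes R_X \longrightarrow \algO(\Lambda_X)
\]
has finite kernel (contained in $\Aut(X)$) and image of finite index. Since $\Lambda_X$ embeds in $\Lambda_{\barX}$ of rank $2$, its rank is at most $2$, and since $X$ is projective, $\Lambda_X$ contains an ample class, so its rank is $1$ or $2$. When $\rk \Lambda_X=1$ the Beauville--Bogomolov form is positive definite, so $\algO(\Lambda_X)\simeq \{\pm 1\}$. When $\rk \Lambda_X=2$, the lattice $\Lambda_X$ has finite index in $\Lambda_{\barX}$, which has signature $(1,1)$ by the Hodge index theorem for the Beauville--Bogomolov form, so $\Lambda_X$ also has signature $(1,1)$.

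Next I would invoke the classical fact that the isometry group of a rank-$2$ integral lattice of signature $(1,1)$ is either finite or virtually infinite cyclic: the intersection $\algO(\Lambda_X)\cap \mathrm{SO}^+(1,1)$ is a discrete subgroup of the one-dimensional Lie group $\mathrm{SO}^+(1,1)\simeq \R_{>0}$, hence either trivial or isomorphic to $\Z$, and $\algO(\Lambda_X)$ is an extension of this by a subgroup of $\algO(1,1)/\mathrm{SO}^+(1,1)\simeq (\Z/2)^2$. Combining with the previous paragraph, in every case $\algO(\Lambda_X)$ is either finite or almost abelian of rank $1$ in the sense of Definition \ref{almab}. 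Since $\Bir(X)\ltimes R_X$ is an extension of a finite-index subgroup of $\algO(\Lambda_X)$ by a finite group (the kernel of $\rho$), the same dichotomy holds for $\Bir(X)\ltimes R_X$: it is either finite or almost abelian of rank $1$.

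Finally I would observe that $\Aut(X)$ is a subgroup of $\Bir(X)\ltimes R_X$ (via the obvious inclusion into the first factor), and that any subgroup $H$ of an almost abelian group $G$ of rank $r$ is again almost abelian of rank at most $r$: if $G^0\vartriangleleft G$ is a finite-index normal subgroup fitting in $1\to K\to G^0\to \Z^r\to 1$ with $K$ finite, then $H^0:=H\cap G^0$ is normal of finite index in $H$ and fits in $1\to H\cap K\to H^0\to \Z^{r'}\to 1$ for some $r'\leq r$, since subgroups of $\Z^r$ are free abelian of rank $\leq r$. Applying this with $r\leq 1$ yields the desired conclusion. The only mildly delicate point is the signature $(1,1)$ classification of $\algO(\Lambda_X)$, but that is a standard application of Dirichlet's unit theorem (equivalently, the theory of Pell's equation) rather than a genuine obstacle.
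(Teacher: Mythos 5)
Your proof is correct and follows essentially the same route as the paper: both reduce, via a representation on the rank-$\le 2$ N\'eron--Severi lattice with finite kernel, to the fact that the isometry group of an integral binary lattice of signature $(1,1)$ is finite or virtually $\Z$. The paper base-changes to $\overline{k}$ and works with $\rho\colon \Aut(\barX)\to\algO(\Lambda_{\barX})$, citing Oguiso's Proposition 8.3 for the extension steps and Dickson's Theorem 87 for the binary form, whereas you route through Proposition \ref{indexbir} and verify by hand that almost-abelianness of rank $\le 1$ passes to finite-by-$G$ extensions and to subgroups; these are only presentational differences.
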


\begin{proof}
Since $\Aut (X) \subset \Aut (\barX)$, we may assume that $k$ is algebraically closed.
We note that the natural morphism
$\rho\colon \Aut (X) \rightarrow \algO (\Lambda_{X})$ has finite kernel.
Then applying \cite[Proposition 8.3 (2)]{Oguiso2008} to an exact sequence
\[
1 \rightarrow \ker \rho \rightarrow \Aut (X) \rightarrow \Ima \rho \rightarrow 1,
\]
we can reduce the problem to $\Ima \rho$.
Moreover, applying \cite[Proposition 8.3 (1)]{Oguiso2008} to an exact sequence
\[
1 \rightarrow \Ima \rho \cap \SO (\Lambda_{X}) \rightarrow \Ima \rho \rightarrow \Ima \det \rightarrow 1,
\]
we can reduce the problem to $\Ima \rho \cap \SO (\Lambda_{X})$.
On the other hand, $\SO (\Lambda_{X})$ is a finite group or an almost finite group of rank $1$ by \cite[Theorem 87]{Dickson1957}. Thus it finishes the proof.
\end{proof}

In this section, we prove the following theorem.
\begin{theorem}\label{hypkfintwist}
Let $k$ be a field of characteristic $0$.
Let $k' /k$ be a finite extension of fields.
Let $X$ be an \hyp variety over $k$.
Then the set $\Tw_{k'/k} (X)$ is finite.
\end{theorem}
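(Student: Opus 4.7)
The plan is to imitate the K3 proof of Theorem \ref{fintwistK3}, replacing the K3 cone conjecture by the birational/automorphism cone conjectures of Section \ref{Hyperkahlervarieties}, and to cover the low-Picard-rank regime separately by the group-cohomological machinery of Section \ref{Preliminaries}.

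First I would reduce to the case where $k'/k$ is a finite Galois extension. By \cite[Theorem 3.8]{Blanc2017} an \hyp variety has no infinitesimal birational automorphisms, so for any two \hyp varieties $Y_{1}, Y_{2}$ over $k$ the isomorphism scheme $\Isom_{Y_{1}, Y_{2}/k}$ is unramified; hence $\Tw_{k'/k}(X) = \Tw_{k''/k}(X)$ where $k''$ is the separable closure of $k$ inside $k'$, and since the characteristic is zero we may then pass to the Galois closure of $k''$ over $k$. After this reduction, standard Galois descent embeds $\Tw_{k'/k}(X)$ into the pointed set $H^{1}(\Gal(k'/k), \Aut(X_{k'}))$.

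I would then split by the geometric second Betti number $b_{2} \coloneqq b_{2}(X_{\overline{k}})$. When $b_{2} \geq 5$ I follow the K3 template: Lemma \ref{lemmamainrefined2} supplies a positive integer $d$ such that every $Y \in \Tw_{k'/k}(X)$ admits a polarization $L_{Y}$ of Beauville--Bogomolov square $d$, while Lemma \ref{hypfinquasipol} gives a finite system $M_{1}, \dots, M_{m}$ of representatives for polarizations of square $d$ on $X_{k'}$ modulo $\Aut(X_{k'})$. For each $i$ the set
\begin{align*}
T_{i} \coloneqq \left\{ (Y, L) \mid Y \in \Tw_{k'/k}(X),\ L \text{ polarization on } Y,\ (Y,L)_{k'} \simeq (X_{k'}, M_{i}) \right\}/k\text{-isom.}
\end{align*}
is finite because $\Aut(X_{k'}, M_{i})$ is a finite group by \cite[Proposition 2.26]{Brion2018} (so $T_{i}$ is governed by $H^{1}(\Gal(k'/k), \Aut(X_{k'}, M_{i}))$), and the map $Y \mapsto (Y, L_{Y})$ embeds $\Tw_{k'/k}(X)$ into $\bigsqcup_{i} T_{i}$. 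When $b_{2} \leq 4$, the Beauville--Bogomolov form on $\Lambda_{X_{\overline{k}}}$ has signature $(1, \rho-1)$ while the form on $H^{2}$ has signature $(3, b_{2}-3)$, forcing $\rho \coloneqq \rk \Lambda_{X_{\overline{k}}} \leq b_{2} - 2 \leq 2$. If $\rho = 1$, the kernel of $\Aut(X_{\overline{k}}) \to \algO(\Lambda_{X_{\overline{k}}}) = \{\pm 1\}$ preserves a polarization, hence is finite by \cite[Proposition 2.26]{Brion2018}, so $\Aut(X_{k'})$ is finite and Lemma \ref{lemgalcoh}(1) applies; if $\rho = 2$, Lemma \ref{lemmapic2} shows $\Aut(X_{k'}) \subset \Aut(X_{\overline{k}})$ is finite or almost abelian of rank $1$, and Proposition \ref{propgalcoh} then gives finiteness of $H^{1}(\Gal(k'/k), \Aut(X_{k'}))$.

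The hard step is not the Galois descent itself but the uniform polarization-degree bound of Lemma \ref{lemmamainrefined2} powering the $b_{2} \geq 5$ case, since that bound genuinely needs both the rational-polyhedral birational cone conjecture (Theorem \ref{bircone}) and the Amerik--Verbitsky uniform bound on MBM classes; it is precisely the unavailability of those inputs for $b_{2} \leq 4$ that forces the separate almost-abelian-group treatment in the low-rank regime.
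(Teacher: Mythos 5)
Your proposal is correct and follows essentially the same route as the paper: reduce to a Galois extension, use Lemma \ref{lemmamainrefined2} and Lemma \ref{hypfinquasipol} to embed $\Tw_{k'/k}(X)$ into finitely many sets of twists of polarized varieties (finite by \cite[Proposition 2.26]{Brion2018}) when $b_{2}(X_{\overline{k}})\geq 5$, and handle Picard rank $1$ by the polarized case and rank $2$ by Lemma \ref{lemmapic2} together with Proposition \ref{propgalcoh}. The only additions are harmless elaborations (the signature argument for $\rho\leq 2$ and the explicit $H^{1}$ framing) that the paper leaves implicit.
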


\begin{proof}
Taking the Galois closure, we may assume that $k'/k$ be a finite Galois extension.
First, we consider the case where $b_{2}(\barX) \geq 5$.
Let $d$ be a positive integer as in Lemma \ref{lemmamainrefined2}.
Then any $Y \in \Tw_{k'/k}(X)$ admits a polarization $M_{Y}$ of Beauville--Bogomolov square $d$.
Moreover, by Lemma \ref{hypfinquasipol}, we can take a complete system of representatives $M_{1}, \ldots M_{m} \in \Lambda_{X_{k'}}$ of polarizations on $X_{k'}$ of Beauville--Bogomolov square $d$ modulo $\Aut (X_{k'})$.

For $1\leq i \leq m$, we put
\[
T_{i} \coloneqq 
\left\{
(Y, M) \left| 
\begin{array}{l}
Y \colon \textup{\hyp variety over } k \\
M \colon \textup{polarization on } Y\\
(Y,M)_{k'} \simeq (X_{k'}, M_{i})
\end{array}
\right.\right\}/k\textup{-isom}.
\]
Then we have an injective morphism of sets
\[
\Tw_{k'/k}(X) \rightarrow \bigsqcup_{i} T_{i},\ Y \mapsto (Y,M_{Y}),
\]
where $M_{Y}$ is taken as the above argument.
The finiteness of $T_{i}$ follows from the finiteness of the automorphism group of polarized \hyp variety (\cite[Proposition 2.26]{Brion2018}).
Therefore, $\Tw_{k'/k}(X)$ is a finite set.

Next, we treat the case of $b_{2}(\barX) \leq 4$.
In this case, the Picard rank of $\barX$ is $1$ or $2$.
If the Picard rank of $\barX$ is $1$, then the finiteness is reduced to the polarized case.
If the Picard rank of $\barX$ is $2$, by Lemma \ref{lemmapic2} and Proposition \ref{propgalcoh}, $\Tw_{k'/k} (X)$ is also finite. It finishes the proof of Theorem \ref{hypkfintwist}.
\end{proof}

\section{Uniform bounds}
\label{Uniformbounds}
In this section, we prove the following uniformness result for $\Tw_{k'/k}(X)$.

\begin{theorem}\label{effective}
Let $k'/k$ be a finite extension of fields of characteristic $p$.
\begin{enumerate}
\item
In Theorem \ref{fintwistK3}, the cardinality of the set $\Tw_{k'/k}(X)$ is bounded above by a constant which depends only on $p$, $[k'\colon k]$ and $\disc (\Lambda_{\barX})$.
\item
Suppose that $k$ is a subfield of $\C$.
Let $X$ be an \hyp variety over $k$.
Then the cardinality of the set $\Tw_{k'/k} (X)$ 
is bounded above by a constant which depends only on $[k'\colon k]$, $\disc (\Lambda_{\barX}, q)$ and the deformation class of $X_{\C}$.
Here, we say complex manifolds $Y_{1}$ and $Y_{2}$ are deformation equivalent if there exist connected complex spaces $\mathcal{Y}$ and $S$, a smooth proper holomorphic map $\pi \colon \mathcal{Y} \rightarrow S$, and two points $s_{1}, s_{2}$ such that $Y_{i}$ is biholomorphic to $\pi^{-1} (s_{i})$.
\end{enumerate}
\end{theorem}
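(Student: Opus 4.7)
The plan is to trace the effective content of the proofs of Theorem \ref{fintwistK3} and Theorem \ref{hypkfintwist}, verifying at each step that every intermediate bound depends only on the stated data. Both (1) and (2) are handled by parallel arguments; case (2) is more delicate because the relevant constants additionally depend on the deformation class of $X_{\C}$.

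First, after replacing $k'/k$ by its Galois closure (which only costs a factor depending on $[k'\colon k]$), I would invoke Borel's theorem \cite[Section 5, (a)]{Borel1963} to bound the number of isomorphism classes of $\Gal(k'/k)$-lattice structures on $\Lambda_{\barX}$ in terms of $[k'\colon k]$, $\disc(\Lambda_{\barX})$, and the rank of $\Lambda_{\barX}$. The rank is itself bounded: by $22$ in case (1) (here $p$ enters only to isolate the supersingular stratum), and by $b_{2}(X_{\C})$ in case (2), which depends only on the deformation class. Hence we may restrict to finitely many Galois-lattice configurations, all controlled by the stated invariants.

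Second, Lemma \ref{weylK3} in case (1) and Lemma \ref{lemmamainrefined2} in case (2) produce a polarization degree $d$ realizable on every $Y \in \Tw_{k'/k}(X)$, and I would check that $d$ can be chosen uniformly. In case (2) this uses the Amerik--Verbitsky bound of Theorem \ref{automconebarX} on the Beauville--Bogomolov square of MBM classes, which is uniform over a deformation class by \cite[Theorem 5.3]{Amerik2017} and \cite[Corollary 1.4]{Amerik2016}; in case (1) the Weyl-chamber argument depends only on the Galois-lattice data, so the uniformity is automatic. Third, Lemma \ref{finquasipol} (resp.\,Lemma \ref{hypfinquasipol}) combined with the cone conjectures (Theorem \ref{coneK3}, Theorem \ref{automcone}) bounds the number of polarizations of degree $d$ modulo $\Aut(X_{k'})$. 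Finally, every polarized pair $(Y,L_{Y}) \in T_{i}$ is classified by an element of $H^{1}(\Gal(k'/k), \Aut(X_{k'}, M_{i}))$; since $\Aut(X_{k'}, M_{i})$ is a finite subgroup of the stabilizer in $\algO(\Lambda_{\barX})$ of a vector of positive square $d$, its order is bounded in terms of $d$ and $\disc(\Lambda_{\barX})$, and the trivial estimate $\#H^{1} \leq (\#\Aut)^{[k'\colon k]}$ yields the desired uniform bound.

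The main obstacle will be the third step: the rational polyhedral fundamental domain provided by the cone conjectures is built from an arbitrary ample class and is not canonical, and I would need to argue that a choice can be made whose combinatorial invariants depend only on the stated data. The count in the proof of Lemma \ref{finquasipol} is expressed in terms of the pairings of the generators of such a domain, so the key point is to show that the arithmetic subgroup of $\algO(\Lambda_{X})$ acting on $\nef_{X}^{+}$ (or $\mv_{X}^{+}$) is determined by the Galois-lattice configuration, together in case (2) with the finite set of MBM classes attached to the deformation class. This reduces matters to a finite list of polyhedra drawn from data already controlled by the stated invariants, and the uniform count then follows by bounding the pairing matrix of the generators against an integral polarization of degree $d$.
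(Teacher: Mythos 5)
Your outline follows the same skeleton as the paper's proof (fix the lattice via Cassels/Borel, make the polarization degree uniform, count polarizations modulo automorphisms, then count forms of each polarized pair), but the step you flag as ``the main obstacle'' is precisely where the paper's real work lies, and your proposed resolution does not close it. The arithmetic subgroup $\Gamma_{\Aut(X)}$ (resp.\ $\Gamma_{\Bir(X)}$) acting on $\nef_{X}^{+}$ (resp.\ $\mv_{X}^{+}$) is \emph{not} determined by the Galois-lattice configuration together with the MBM data: what one can and must prove is that the index of the image of $\Aut(\barX)\ltimes R_{\barX}$ (resp.\ $\Bir(\barX)\ltimes R_{\barX}$) in $\algO(\Lambda_{\barX})$ is uniformly bounded (Lemmas \ref{index} and \ref{effectiveindex} in the paper). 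Once that index bound is in hand, one fixes a single rational polyhedral fundamental domain $D_{\Lambda}$ for the full group $\algO^{+}(\Lambda)$ per isometry class and takes the union of boundedly many translates $g_{l}D_{\Lambda}$, to which Lemma \ref{finquasipol} applies. Proving the index bound is genuinely hard: in case (1) it needs the Torelli theorem in characteristic $0$, the crystalline Torelli theorem in the supersingular case (where $p$ enters the constant through $\#\GL(\Lambda_{\barX}\otimes\F_{p})$ --- so your remark that $p$ ``enters only to isolate the supersingular stratum'' is inaccurate), and a lifting argument at finite height; in case (2) it needs finiteness of the index of the monodromy group $\mathrm{Mon}^{2}(X_{\C})$ in $\algO^{+}(H^{2}(X_{\C},\Z))$, which is where the deformation class genuinely intervenes. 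The same index bound, combined with the Schreier index formula, is also what lets the paper replace $k'$ by a uniformly bounded extension over which all (birational) automorphisms are defined --- a reduction you skip but which is needed to compare the count modulo $\Aut(X_{k'})$ with the geometric count.

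There is a second gap in your final step. You bound $\#\Aut(X_{k'},M_{i})$ by identifying it with a finite subgroup of the stabilizer in $\algO(\Lambda_{\barX})$ of a vector of square $d$. For K3 surfaces this works because $\Aut\to\algO(H^{2}_{\et})$ is injective, but for a general \hyp variety the action of $\Aut(X,L)$ on $\Lambda_{X}$ (and even on all of $H^{2}$) can have nontrivial kernel, so the stabilizer bound alone does not control the order. The paper handles this with Proposition \ref{boundaut}, which bounds $\#\Bir(X,L)$ in terms of $\dim X$ and $L^{n}$ via Koll\'ar's effective base point freeness together with the Kov\'acs and Szab\'o bounds; some input of this kind is unavoidable in case (2). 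Your $H^{1}$ estimate $\#H^{1}\leq(\#\Aut)^{[k'\colon k]}$ is fine once the group order is actually bounded, but that bound is the missing ingredient.
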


Throughout the proof, we note that we may fix the lattice isometry class of $\Lambda_{\barX}$ since the set of isometry classes of $\Z$-lattices with bounded rank and discriminant is finite (\cite[$\ch$. 9, Theorem 1.1]{Cassels1982}).
The proof is accomplished by seeing constants appearing in the proof of Theorem \ref{hypkfintwist} carefully. 

\subsection{Bounds of automorphism groups}
In this subsection, we establish a uniform bound of canonical quasi-polarized variety, based on the work of Kov\'{a}cs. 

\begin{prop}\label{boundaut}
Let $k$ be an algebraically closed field of characteristic $0$.
Let $X$ be a smooth projective $n$-dimensional variety over $k$ with the trivial canonical bundle.
Let $L$ be a nef big line bundle on $X$.
Then the order of the group $\Bir (X,L)$ 
is finite and bounded above by a constant which depends only on $n$ and $L^{n}$.
\end{prop}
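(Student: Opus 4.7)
The plan is to reduce the assertion to a uniform bound on the automorphism group of a polarized Calabi--Yau variety with canonical singularities, using the semi-ample fibration defined by a large multiple of $L$.

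First, since $K_{X}=\mathcal{O}_{X}$ and $L$ is nef and big, the Kawamata--Shokurov base-point-free theorem shows that $L$ is semi-ample. An effective form of this statement (as in Koll\'{a}r's work in the Calabi--Yau setting) produces an integer $m_{0}=m_{0}(n, L^{n})$ such that $m_{0}L$ is base-point-free and defines a birational morphism $\phi\colon X \to Y$ onto a normal projective variety $Y$ with $\phi_{\ast}\mathcal{O}_{X}=\mathcal{O}_{Y}$; writing $m_{0}L = \phi^{\ast}H$, the line bundle $H$ on $Y$ is ample with $H^{n}=m_{0}^{n}L^{n}$. Since $K_{Y} = \phi_{\ast}K_{X} = 0$, the divisor $K_{Y}$ is Cartier and linearly trivial; moreover the relation $K_{X}=\phi^{\ast}K_{Y}+\sum a_{i}E_{i}$ forces every discrepancy $a_{i}$ to vanish, so $Y$ has at worst canonical singularities because $X$ is smooth.

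Next, I would construct an injection $\Bir(X,L)\hookrightarrow \Aut(Y,H)$ as follows. Any $g\in \Bir(X,L)$ is defined in codimension one, and $g^{\ast}L\cong L$, so pullback of sections induces a linear action on $H^{0}(X, m_{0}L)$ preserving the image $Y\subset \mathbb{P}(H^{0}(X,m_{0}L)^{\vee})$; this yields an automorphism $\bar g$ of $(Y,H)$ satisfying $\phi\circ g = \bar g \circ \phi$ on the regular locus of $g$. Injectivity follows because on the dense open subset of $X$ where $\phi$ is an isomorphism onto its image, $\bar g = \mathrm{id}_{Y}$ forces $g=\mathrm{id}$ in $\Bir(X)$.

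Finally, I would invoke the cited theorem of Kov\'{a}cs: for any polarized projective variety $(Y,H)$ of dimension $n$ with canonical singularities and $K_{Y}\equiv 0$, the group $\Aut(Y,H)$ is finite, with order bounded by a constant $C'(n, H^{n})$ depending only on $n$ and $H^{n}$. Substituting $H^{n}=m_{0}^{n}L^{n}$ gives the desired bound $\#\Bir(X,L)\leq C(n,L^{n})$. The main obstacle is obtaining effective control of $m_{0}$ purely in terms of $n$ and $L^{n}$, i.e.\ an effective base-point-free theorem in the Calabi--Yau setting; the substantive input from Kov\'{a}cs's work is precisely the uniform automorphism bound used in the last step, without which one would be left with the degree of the image as an uncontrolled parameter.
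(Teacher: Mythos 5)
Your first two steps (effective base point freeness \`a la Koll\'ar to get a uniform $m_{0}$, and the induced faithful action of $\Bir(X,L)$ on the image $(Y,H)$ of the semi-ample contraction) match the paper's proof; note only that Koll\'ar's bound $m_{0}=2(n+2)!\,n$ depends on $n$ alone, not on $L^{n}$, which is harmless. The genuine gap is in your final step. Kov\'acs's Theorem 1.2.3 does \emph{not} bound $\#\Aut(Y,H)$ for a polarized Calabi--Yau variety $(Y,H)$ with $H$ a line bundle class; it bounds $\#\Aut(X,D)$ for a log canonically polarized \emph{pair}, i.e.\ the group of automorphisms preserving a fixed effective divisor $D$ with $K_{X}+D$ ample. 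Passing from ``preserves the line bundle class'' to ``preserves an actual member of the linear system'' is exactly the nontrivial point (compare a principally polarized abelian variety, where all translations preserve the polarization class but only finitely many preserve a given theta divisor), so as written you have cited, as a black box, a statement that is essentially the proposition itself.

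The paper bridges this in two steps that are missing from your proposal. First, it bounds the order of every \emph{cyclic} subgroup $G=\langle g\rangle$ of $\Bir(X,mL)$: lifting the image of $g$ in $\PGL(H^{0}(X,mL))$ to $\widetilde{g}\in\GL(H^{0}(X,mL))$ and taking an eigenvector $v$ produces a $g$-invariant divisor $D_{v}\in|mL|$, so that $G\subset\Aut(X,D_{v})$ and Kov\'acs's theorem (extended from the ample to the nef and big case) gives $\#G\le (m^{n}L^{n})^{n+1}$. This eigenvector trick only works one element at a time, since a non-cyclic subgroup need not have a common eigenvector, hence no common invariant divisor. Second, to bound the order of the full group rather than just its elements, the paper invokes Szab\'o's theorem on automorphism groups acting faithfully on a projective variety of bounded dimension and degree (here the degree-$m^{n}L^{n}$ image $Y$), yielding $\#\Bir(X,mL)\le (m^{n}L^{n})^{16n3^{n}}$. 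You would need to supply both of these ingredients (or a substitute for them) to close your argument.
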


\begin{proof}
First, by the effective base point free theorem \cite[Theorem 1.1]{Kollar1993},
$2(n+2)!nL$ is base point free. We put $m \coloneqq 2(n+2)!n$.
Since $\Bir (X,L) \subset \Bir (X, mL)$, we shall establish a bound of $\Bir (X, mL)$.
We denote the natural action on the linear system $|mL|$ by $\rho \colon \Bir (X,mL)  \rightarrow \PGL (H^{0}(X,mL))$.
First, we shall bound the order of cyclic subgroups of $\Bir (X,mL)$.
Let $\langle g \rangle = G \subset \Bir (X,mL)$ be a cyclic subgroup.
Let $\widetilde{g} \in \GL (H^{0}(X,mL))$ be a lift of $\rho (g)$.
Let $v \in H^{0}(X,mL)$ be an eigenvector of $\widetilde{g}$.
Then the corresponding effective divisor $D_{v} \in |mL|$ satisfies $g_{\ast} (D_{v}) = D_{v}$, i.e.\ $G \subset \Bir (X,D_{v})$.
Here, we denote the set of automorphisms of $X$ preserving the Cartier divisor $D$ by $\Aut (X,D)$.
Therefore, by \cite[Theorem 1.2.3]{Kovacs2001}, we have 
\[
\# G \leq (m^{n}L^{n})^{n+1}.
\]
Note that in \cite{Kovacs2001}, it is supposed that $(X, D_{v})$ is log canonically polarized, i.e.\  $D_{v}$ is ample. But the same proof works in our situation. 
Now we note that the contraction $\phi_{mL} \colon X \rightarrow Y$ induces the inclusion $\Aut (X, mL) \rightarrow \Aut (Y, \oo(1))$, in particular $\Aut (X, mL)$ acts on a degree $m^{n}L^{n}$ projective subvariety $Y$ faithfully.
Therefore, by \cite[main bound]{Szabo1996}, we have a bound 
\[
\# \Aut (X, mL) \leq (m^{n} L^{n})^{16n3^{n}}.
\]
Therefore, it finishes the proof.
\end{proof}
Note that we immediately have the following corollary.
\begin{corollary}
\label{corboundaut}
Let $k, X, n, L$ be as in Proposition \ref{boundaut}.
Let $k'/k$ be a finite extension of fields of characteristic $0$.
Then $\#\Tw_{k'/k} (X,L)$ is bounded above by a constant which depends only on $n$, $L^{n}$, and $[k'\colon k]$.
\end{corollary}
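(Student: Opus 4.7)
The plan is to combine the finite bound from Proposition \ref{boundaut} with the standard Galois cohomological classification of twists. First I would reduce to a Galois extension: letting $k''$ be the Galois closure of $k'/k$, we have $[k'':k] \leq [k':k]!$, and the tautological map $\Tw_{k'/k}(X,L) \to \Tw_{k''/k}(X,L)$ is injective because both sets are quotients of the same $k$-isomorphism classes. So it suffices to bound $\#\Tw_{k''/k}(X,L)$ by a function of $n$, $L^n$, and $[k'':k]$.

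Next, since $X$ is proper and $L$ defines a polarized pair with finite geometric automorphism group (by Proposition \ref{boundaut} applied over $\overline{k}$), standard effective descent gives an injection
\[
\Tw_{k''/k}(X,L) \hookrightarrow H^1\bigl(\Gal(k''/k),\,\Aut(X_{k''},L_{k''})\bigr),
\]
where $\Aut(X_{k''},L_{k''})$ denotes the group of $k''$-automorphisms of $X_{k''}$ fixing the class of $L_{k''}$. By Proposition \ref{boundaut} applied to $X_{\overline{k}}$ with $L_{\overline{k}}$, one obtains $|\Aut(X_{\overline{k}}, L_{\overline{k}})| \leq N(n, L^n)$, where $N(n, L^n)$ is the explicit constant produced in the proof of Proposition \ref{boundaut}. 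In particular, the subgroup $\Aut(X_{k''}, L_{k''}) \subset \Aut(X_{\overline{k}}, L_{\overline{k}})$ satisfies the same bound.

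Finally, for any action of a finite group $\Gamma$ of order $D$ on a finite group $G$ of order $\leq N$, a 1-cocycle is determined by its values on $\Gamma$, so $|Z^1(\Gamma,G)| \leq N^D$ and hence $|H^1(\Gamma,G)| \leq N^D$. Assembling the three steps,
\[
\#\Tw_{k'/k}(X,L) \;\leq\; N(n,L^n)^{[k'':k]} \;\leq\; N(n,L^n)^{[k':k]!},
\]
which depends only on $n$, $L^n$, and $[k':k]$, as desired. There is no genuine obstacle here: the only substantive input is the finiteness and uniform bound from Proposition \ref{boundaut}, which is what makes the descent picture tractable and converts the $H^1$ bound into a bound depending only on the stated invariants.
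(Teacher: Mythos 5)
Your proof is correct and is precisely the argument the paper intends: the paper states the corollary as an immediate consequence of Proposition \ref{boundaut}, and the implicit reasoning is exactly your reduction to the Galois closure, the injection of $\Tw_{k''/k}(X,L)$ into $H^{1}(\Gal(k''/k),\Aut(X_{k''},L_{k''}))$, and the crude bound $\#H^{1}(\Gamma,G)\leq \#G^{\#\Gamma}$ combined with the uniform bound on the automorphism group. Nothing to add.
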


\begin{remark}\label{boundautK3}
Let $X$ be a K3 surface over an algebraically closed field $k$.
Since $\Aut (X) \rightarrow \algO (H^{2}_{\et}(X,\Q_{\ell}))$ is injective for any $\ell \neq p$ by \cite[Corollary 2.5]{Ogus1979} and \cite[Theorem 1.4]{Keum2016}, the order of any finite subgroup of $\Aut (X)$ is bounded by $4^{22^{2}}$.
Here, we use that $1 + n M_{22} (\Z_{\ell}) \subset \GL_{22} (\Z_{\ell})$ is torsion free if $n = \ell ^{2} =4$ or $ n= \ell \geq 3$.
\end{remark}

\subsection{Proof of Theorem \ref{effective} (1)}
In this subsection, we follow the notation in Theorem \ref{effective} (1).
For simplicity, if some real number $c$ admits an upper bound which depends only on $p, [k' \colon k]$, and $\disc (\Lambda_{\barX})$, then we say that $c$ is uniformly bounded.

To begin with, as in the beginning of the proof of Theorem \ref{fintwistK3}, we may assume that $k'$ is a finite separable extension.
Moreover, we may assume that $\Lambda_{X_{k'}} =  \Lambda_{\sepX} = \Lambda_{\barX}$.
Indeed, an order of a torsion subgroup of $\GL (\Lambda_{\sepX})$ is bounded by $\# \GL_{\rho} (\F_{3})$, since $1 + 3 \mathrm{M}_{\rho}(\Z_{3}) \subset \GL_{\rho} (\Z_{3})$ is a torsion free subgroup. 

Next, we shall show that we may assume $\Aut (X_{k'}) = \Aut (\sepX) (= \Aut (\barX))$.
We need the following lemma.
Let $\rho \colon \Aut (\barX) \ltimes R_{\barX} \rightarrow \algO (\Lambda_{\barX})$
be the natural morphism. Here, we write $R_{\barX}$ for  the subgroup of $\algO (\Lambda_{\barX})$ generated by reflections by $-2$ classes as in \cite{Bright2019}.
We note that $R_{\barX}$ is equal to $R_{\sepX}$ which is defined similarly, by \cite[Corollary 3.2]{Bright2019}. Therefore, the absolute Galois group $G_{k}$ acts on $\Aut(\barX)$, $R_{\barX}$, or $\algO (\Lambda_{\barX})$.
\begin{lemma}\label{index}
The index of $\Ima \rho$ in $\algO (\Lambda_{\sepX})$ is bounded above by a constant which depends only on the lattice isometry class of $\Lambda_{\sepX}=\Lambda_{\barX}$.
\end{lemma}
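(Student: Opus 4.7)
The plan is to reduce the index bound to a purely lattice-theoretic count by invoking the Torelli theorem for K3 surfaces. First, I would introduce the subgroup $\algO^{\sharp}(\Lambda_{\barX}) \subset \algO(\Lambda_{\barX})$ of \emph{stable} isometries, i.e., those inducing the identity on the discriminant form $q_{\Lambda_{\barX}} \colon \Lambda_{\barX}^{\ast}/\Lambda_{\barX} \to \Q/2\Z$. By Nikulin's gluing theorem, every stable isometry extends as the identity on the orthogonal complement inside the ambient K3 lattice (or its crystalline analogue for non-supersingular $X$ in positive characteristic). Since the extension is trivial on the transcendental part, it automatically respects the Hodge structure or the $F$-crystal structure.

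Second, I would show that every stable isometry $g$ which additionally preserves a fixed connected component of $\pos_{\barX}$ lies in $\Ima \rho$. The image $g(\nef_{\barX})$ is a chamber of $\pos_{\barX}$ cut out by the $(-2)$-walls, so by the same argument as in \cite[Proposition 3.7]{Bright2019} there exists $r \in R_{\barX}$ with $(r \circ g)(\nef_{\barX}) = \nef_{\barX}$. The global Torelli theorem for K3 surfaces---due to Piatetski-Shapiro--Shafarevich in characteristic zero, and to Ogus together with Lieblich--Maulik for non-supersingular K3 surfaces in positive characteristic---then produces $\phi \in \Aut(\barX)$ with $\phi_{\ast} = r \circ g$ on $\Lambda_{\barX}$, so $g = r^{-1} \phi_{\ast} \in \Ima \rho$.

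Third, the index in $\algO(\Lambda_{\barX})$ of the subgroup of stable isometries preserving a fixed component of the positive cone is at most $2 \cdot |\algO(q_{\Lambda_{\barX}})|$: the factor $2$ accounts for swapping the two connected components of $\{\lambda \in \Lambda_{\barX,\R} \mid (\lambda,\lambda) > 0\}$, while $|\algO(q_{\Lambda_{\barX}})|$ bounds the image of the natural map $\algO(\Lambda_{\barX}) \to \algO(q_{\Lambda_{\barX}})$. Since the discriminant form is an invariant of the lattice isometry class, the resulting inequality
\[
[\algO(\Lambda_{\barX}) \colon \Ima \rho] \le 2 \cdot |\algO(q_{\Lambda_{\barX}})|
\]
depends only on that class, which is what is required.

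The main obstacle is invoking the appropriate Torelli-type statement uniformly across all characteristics, especially in characteristic $2$, where the crystalline Torelli theorem for non-supersingular K3 surfaces must take the place of the classical Hodge-theoretic one; the restriction to the non-supersingular case in Theorem \ref{fintwistK3} is exactly what makes this available. A secondary point is the identification between $R_{\barX}$ and the full Weyl group of $(-2)$-reflections on $\Lambda_{\barX}$, but this follows from Riemann--Roch, since every $(-2)$-class on a K3 surface is effective or anti-effective.
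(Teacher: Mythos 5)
Your characteristic-zero argument is essentially the one the paper uses (and the one behind \cite[Chapter 8, Theorem 4.2]{Huybrechts2016}): stable isometries preserving the positive cone extend by the identity on the transcendental lattice via Nikulin's gluing, are realized by $\Aut(\barX)\ltimes R_{\barX}$ through the global Torelli theorem, and have index bounded by $2\cdot\#\algO(\Lambda_{\barX}^{\ast}/\Lambda_{\barX})$. That part is fine. The gap is in positive characteristic, where your Torelli input is misattributed and, as stated, does not exist: Ogus's crystalline Torelli theorem (and its extension by Bragg--Lieblich) applies to \emph{supersingular} K3 surfaces, not to non-supersingular ones. For a K3 surface of finite height there is no direct Torelli-type statement letting you promote a stable isometry of $\Lambda_{\barX}$ fixing the nef chamber to an automorphism. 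The correct route there is the one the paper takes: lift $X$ to a projective family $\mathcal{X}$ over $\W(\overline{k})$ with $\Pic(\mathcal{X})\to\Pic(\barX)$ an isomorphism, apply the characteristic-zero bound to the geometric generic fiber, and transport the conclusion back via the specialization map on $\Aut\ltimes R$ (using \cite[Theorem 2.1]{Lieblich2018} and Matsusaka--Mumford). Your proposal contains no lifting or specialization step, so the finite-height case is not actually covered.

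A second, related problem is the scope of the non-supersingularity hypothesis. The restriction to non-supersingular $X$ in Theorem \ref{fintwistK3} applies only in characteristic $2$; for odd characteristic the lemma must also handle supersingular K3 surfaces, and there the crystalline Torelli theorem is precisely the tool (the paper bounds the relevant index by $\#\GL(\Lambda_{\barX}\otimes_{\Z}\F_{p})$ following \cite[Proposition 5.2]{Lieblich2018}, which is why the constant in the statement is allowed to depend on $p$). So the case split you need is: characteristic $0$ (your argument), supersingular (crystalline Torelli), and finite height (lift to characteristic $0$ and specialize) --- and your proposal conflates the last two.
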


\begin{proof}
In the characteristic $0$ case, this follows from the proof of \cite[Chapter 8, Theorem 4.2]{Huybrechts2016}.
Indeed, any element in $\ker (\algO (\Lambda_{\barX}) \rightarrow \algO (\Lambda_{\barX}^{\ast} / \Lambda_{\barX}))$ comes from the image of $\rho$.
We treat the positive characteristic case.
If $X$ is supersingular, then this finiteness of the index essentially follows from the crystalline Torelli theorem \cite{Bragg2018}.
In this case, by the proof of \cite[Proposition 5.2]{Lieblich2018}, the index $[\algO (\Lambda_{\barX}) \colon \rho (\Aut (X_{\barX}) \ltimes R_{\barX})]$ is absolutely bounded. Indeed, the index 
\[
[\algO^{+} (\Lambda_{\barX}) / R_{\barX}\colon \Ima (\Aut (\barX) \rightarrow \algO^{+} (\Lambda_{\barX}) / R_{\barX})]
\]
is at most $\# \GL (\Lambda_{\barX} \otimes_{\Z} \F_{p})$.

On the other hand, if $X$ has finite height, there exists a projective K3 families $\mathcal{X} \rightarrow \W (\overline{k})$ with $\mathcal{X}_{\overline{k}} \simeq X_{\overline{k}}$ such that the restriction map $\Pic (\mathcal{X}) \rightarrow \Pic (\barX)$ is an isomorphism. 
In this case, we have $\Lambda_{\mathcal{X}_{\overline{F}}} \simeq \Pic (\mathcal{X}) \simeq \Lambda_{\barX}$. 
Here, we put $F \coloneqq \Frac (\W (\overline{k}))$.
Therefore, by the case of $\chara k = 0$, the index 
\[
[\Lambda_{\mathcal{X}_{\overline{F}}} \colon \Ima (\Aut (\mathcal{X}_{\overline{F}}) \ltimes R_{\mathcal{X}_{\overline{F}}} \rightarrow \algO (\Lambda_{\mathcal{X}_{\overline{F}}}))]
\]
is uniformly bounded.
Since the map
\[
\Aut (\mathcal{X}_{\overline{F}}) \ltimes R_{\mathcal{X}_{\overline{F}}} \rightarrow \algO (\Lambda_{\mathcal{X}_{\overline{F}}}) \simeq \algO (\Lambda_{\barX})
\]
factors through the specialization map $\Aut (\mathcal{X}_{\overline{F}}) \ltimes R_{\mathcal{X}_{\overline{F}}} \rightarrow  \Aut (\barX) \ltimes R_{\barX}$ by \cite[Theorem 2.1]{Lieblich2018}, it finishes the proof.
\end{proof}

\begin{lemma}
The number of generators of the image of $\rho$ is uniformly bounded.
In particular, the number of generators of $\Aut ( \barX) \ltimes R_{\barX}$ and $\Aut (\barX)$ are uniformly bounded.
\end{lemma}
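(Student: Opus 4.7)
The plan is to combine Lemma \ref{index} with a uniform generator count for the ambient orthogonal group and Schreier's lemma. First I would bound the number of generators of $\algO(\Lambda_{\barX})$ itself. Since $\rk \Lambda_{\barX} \leq 22$ and the discriminant is fixed, the lattice isometry class of $\Lambda_{\barX}$ ranges over a finite set determined by $p$ and $\disc(\Lambda_{\barX})$. Hence $\algO(\Lambda_{\barX})$ is an arithmetic subgroup of a real orthogonal group of signature $(1,\rho-1)$ whose conjugacy class depends only on this isometry class, and by reduction theory it acts on the hyperbolic space $\pos_{\barX}$ with a finite-sided rational polyhedral fundamental domain. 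The number of walls of this domain, and hence a bound on the number of generators of $\algO(\Lambda_{\barX})$, depends only on the isometry class of $\Lambda_{\barX}$.

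Second, by Lemma \ref{index}, the index $[\algO(\Lambda_{\barX}) \colon \Ima \rho]$ is uniformly bounded. Schreier's lemma then gives a uniform bound on the number of generators of $\Ima \rho$: if $G$ is generated by $n$ elements and $H \leq G$ has index $m$, then $H$ is generated by at most $nm$ elements. Combining this with the first step yields a uniform bound on the number of generators of $\Ima \rho$.

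Third, I would transfer this bound back to $\Aut(\barX) \ltimes R_{\barX}$ and then to $\Aut(\barX)$. Since $R_{\barX} \hookrightarrow \algO(\Lambda_{\barX})$ is faithful by construction, the kernel of $\rho$ is controlled by $\ker(\Aut(\barX) \to \algO(\Lambda_{\barX}))$. For a K3 surface in the setting of Theorem \ref{fintwistK3} this kernel is finite, and its order is uniformly bounded by Remark \ref{boundautK3}, which uses the faithful representation on $H^{2}_{\et}(\barX, \Q_{\ell})$ to give an absolute bound on the order of any finite subgroup of $\Aut(\barX)$. Since an extension of a group with $n$ generators by a finite kernel of order $k$ is generated by at most $n + k$ elements, $\Aut(\barX) \ltimes R_{\barX}$ is generated by uniformly boundedly many elements. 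Finally, $\Aut(\barX)$ is a quotient of the semidirect product under the projection killing $R_{\barX}$, so the same bound applies to $\Aut(\barX)$.

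The main obstacle is the first step: making the generator count for $\algO(\Lambda_{\barX})$ depend only on the finite data $\rho$ and $\disc(\Lambda_{\barX})$. This is essentially standard reduction theory for arithmetic subgroups of orthogonal groups of hyperbolic lattices (e.g.\ via a Siegel set or an explicit polyhedral fundamental domain), but one must check that the combinatorial complexity of the fundamental domain can be controlled uniformly across the finite set of isometry classes; once this is in hand, Schreier's lemma and an elementary extension argument finish the proof.
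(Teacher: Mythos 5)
Your proposal is correct and follows essentially the same route as the paper: Lemma \ref{index} plus the Schreier index formula for the first statement, and the uniform bound on $\#\ker\rho$ from Remark \ref{boundautK3} (together with $\Aut(\barX)$ being a quotient of the semidirect product) for the second. The only detail you elaborate beyond the paper is the finite generation of $\algO(\Lambda_{\barX})$ with a bounded generator count, which the paper handles implicitly by fixing the lattice isometry class of $\Lambda_{\barX}$ at the start of Section \ref{Uniformbounds} (there are only finitely many such classes for bounded rank and discriminant), so no new argument is needed there.
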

\begin{proof}
The first statement follows from Lemma \ref{index} and the Schreier index formula.
The second statement follows from the first statement since the order of $\ker \rho$ is uniformly bounded by Remark \ref{boundautK3}.
\end{proof}

Since the order of $\ker \rho$ is uniformly bounded by Remark \ref{boundautK3}, we may assume that $\ker \rho$ is $G_{k'}$-trivial.
Let $\gamma_{1}, \ldots, \gamma_{N}$ be generators of $\Aut (\sepX)$. For any $\sigma \in G_{k'}$, we have $\sigma (\gamma_{i}) = \gamma_{i} \delta_{i,\sigma}$ for $\delta_{i,\sigma} \in \ker \rho$.
Then the map $\sigma \mapsto \delta_{i, \sigma}$ gives a group morphism $\psi_{i}\colon G_{k'} \rightarrow \ker \rho$.
We note that the index of $\cap_{i} \ker \psi_{i}$ in $G_{k'}$ is uniformly bounded, since so are $N$ and the order of $\ker \rho$.
Replacing $k'$ by a finite field extension corresponding to $\cap_{i} \ker \psi_{i}$, we may assume that $\Aut (\sepX) = \Aut (X_{k'}).$

We need the following lemma to bound the polarization degree uniformly.

\begin{lemma}
\label{effectiveweylK3}
In the statement of Lemma \ref{weylK3}, 
we can take a positive integer $d$ which depends only on the lattice isometry class of $\Lambda_{\barX}$ (does not depend on $[k'\colon k]$).
\end{lemma}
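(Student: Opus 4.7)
The plan is to refine the proof of Lemma~\ref{weylK3}, replacing its appeal to \cite[Section~5, (a)]{Borel1963}---which bounds the number of $\Gal(k'/k)$-module structures on $\Lambda_{\overline{X}}$ in terms of $[k'\colon k]$---by a count that depends only on the isometry class of $\Lambda_{\overline{X}}$.

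For any $Y \in \Tw_{k'/k}(X)$, I fix a $k'$-isomorphism $Y_{k'} \simeq X_{k'}$; this identifies $\Lambda_{Y_{k'}}$ with $\Lambda_{X_{k'}} = \Lambda_{\overline{X}}$ (we are in the context of the proof of Theorem~\ref{effective}(1), where this reduction has already been made), and under the identification the descent datum on $Y$ induces an action of $\Gal(k'/k)$ on $\Lambda_{\overline{X}}$ with image $H_Y \subset \algO(\Lambda_{\overline{X}})$. By Galois descent, $\Lambda_Y = \Lambda_{\overline{X}}^{H_Y}$. The key observation is that the finite group $H_Y$ has order bounded by a universal constant $M$ depending only on the rank $\rho$ of $\Lambda_{\overline{X}}$, via the same Minkowski-type observation (that $1 + 3\M_\rho(\Z_3) \subset \GL_\rho(\Z_3)$ is torsion-free, forcing $|H_Y| \leq |\GL_\rho(\F_3)|$) used at the beginning of the proof of Theorem~\ref{effective}(1).

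With $|H_Y| \leq M$, \cite[Section~5, (a)]{Borel1963} applied uniformly across groups of order at most $M$ bounds the number of isomorphism classes of $\Z^\rho$ as a module over such a group by a constant $C(\rho, M)$ depending only on $\Lambda_{\overline{X}}$. I partition $\Tw_{k'/k}(X)$ by the isomorphism class of $\Lambda_{Y_{k'}}$ as a $\Gal(k'/k)$-module. Within each piece the Weyl-group argument in the proof of Lemma~\ref{weylK3} applies verbatim: given $Y_1, Y_2$ in the same piece, the abstract isomorphism of Galois lattices provides an isometry $\phi \colon \Lambda_{Y_1} \simeq \Lambda_{Y_2}$, and any ample class on $Y_1$ transports to an ample class on $Y_2$ of the same degree after composing $\phi$ with a suitable element of $R_{Y_2}$.

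It therefore suffices to bound, for each of these finitely many isomorphism classes, the representative polarization degree by a function of $\Lambda_{\overline{X}}$ alone. For this I invoke the cone conjecture for $\overline{X}$ (Theorem~\ref{coneK3}), which supplies a rational polyhedral fundamental domain for $\Aut(\overline{X})$ on $\nef_{\overline{X}}^+$ and hence an integral ample class $L_0 \in \Lambda_{\overline{X}}$ of degree bounded in terms of $\Lambda_{\overline{X}}$. For each class, a symmetrization of $L_0$ (or of an $\Aut(\overline{X})$-translate adapted to the class) over the relevant subgroup $H$ produces an $H$-invariant integral ample class of bounded degree. The main obstacle lies precisely here: the pairings $(L_0, h L_0)$ for $h \in H$ are not uniformly bounded as $H$ varies within an $\algO(\Lambda_{\overline{X}})$-conjugacy class, so one must either tune the choice of $L_0$ to each class, or invoke the cone conjecture on the sublattice $\Lambda_{\overline{X}}^H$ directly---in either case the bound obtained is intrinsic to $\Lambda_{\overline{X}}$. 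Taking the least common multiple of the representative degrees across the finitely many classes then yields the desired uniform $d$.
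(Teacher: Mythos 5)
Your first half is sound and is essentially the content the paper leaves implicit: the Galois image $H_Y\subset\algO(\Lambda_{\barX})$ has order bounded by a Minkowski-type constant depending only on the rank, so the isomorphism classes of the pairs $(\Lambda_{\barX},\Lambda_Y)$ that can occur form a finite set controlled by $\Lambda_{\barX}$ alone rather than by $[k'\colon k]$. One caveat: partitioning by the isomorphism class of $\Lambda_{Y_{k'}}$ as a $\Gal(k'/k)$-module is too fine for your stated count --- two actions factoring through quotients with different kernels are non-isomorphic as $\Gal(k'/k)$-modules, so the number of pieces is \emph{not} bounded by a constant $C(\rho,M)$ (take $\Gal(k'/k)=(\Z/2)^n$ acting through $\{\pm1\}$). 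This is harmless only because the Weyl-group transport in Lemma~\ref{weylK3} needs no Galois equivariance of $\widetilde{\phi}$: it only needs an isometry of the geometric lattices carrying $\Lambda_{Y_1}$ onto $\Lambda_{Y_2}$, so the invariant that actually governs the polarization degree is the $\algO(\Lambda_{\barX})$-orbit of the sublattice $\Lambda_{\barX}^{H_Y}$, and those orbits are finite in number with a bound depending only on $\Lambda_{\barX}$.

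The genuine gap is your final step, and you flag it yourself: the symmetrization $\sum_{h\in H}hL_0$ has degree involving the pairings $(L_0,hL_0)$, which are not controlled by the conjugacy class of $H$, and neither of your two proposed repairs is carried out --- ``in either case the bound obtained is intrinsic to $\Lambda_{\barX}$'' is an assertion, not an argument. The point you are missing is that no explicit bound on the degree is needed, because the statement only claims \emph{dependence} on the isometry class of $\Lambda_{\barX}$. The transport argument you already quoted shows more than that two twists in the same class share a degree: it shows that the full set of polarization degrees of $Y$ equals the purely lattice-theoretic set of values $(x,x)$ for $x\in\pos_{Y}\cap\Lambda_Y$ with $(x,\delta)\neq 0$ for every $(-2)$-class $\delta\in\Lambda_{\barX}$ (every such $x$ is moved into the ample cone by $R_Y$ without changing its degree, and conversely every ample class is such an $x$). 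Hence the minimal polarization degree is a well-defined function of the class of the embedding $\Lambda_Y\hookrightarrow\Lambda_{\barX}$; since each twist is projective this set is nonempty, and since only finitely many classes occur, with the count depending only on $\Lambda_{\barX}$, collecting the finitely many representative degrees gives a $d$ depending only on $\Lambda_{\barX}$. This is exactly the paper's (two-sentence) proof; your detour through constructing an $H$-invariant ample class of bounded degree is both unnecessary and, as written, incomplete.
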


\begin{proof}
The number of candidates of isomorphism classes of $\Z$-lattices $\Lambda_{Y}$ in Lemma \ref{weylK3} depend on the lattice isometry class of $\Lambda_{\barX}$.
On the other hand. as in the proof of Lemma \ref{weylK3}, the set of degree of polarizations on $Y$ depends only on the lattice isometry class of $\Lambda_{Y}$. Therefore, it finishes the proof.
\end{proof}

In the proof of Theorem \ref{fintwistK3},
first, we took a complete system of representatives $M_{1}, \ldots, M_{m} \in \Lambda_{X_{k'}}$ of polarizations on $X_{k'}$ of degree $d$. Here, $d$ is taken as in Lemma \ref{effectiveweylK3}.
Next, we defined the injective map $\Tw_{k'/k} (X) \rightarrow \bigsqcup_{1 \leq i \leq m} T_{i},\ Y\mapsto (Y,L_{Y})$.
Here, $T_{i}$ is a set of isomorphism classes of $k$-forms of $(X_{k'}, M_{i})$ (see the proof of Theorem \ref{fintwistK3}).
Therefore, it is enough to show that the integer $m$ and the cardinalities of $T_{i}$ are uniformly bounded.
Note that the cardinalities of $T_{i}$ are bounded by a constant which depends only on $[k'\colon k]$ by Remark \ref{boundautK3}. We consider the bound of the integer $m$.
Since $\nef (\barX)$ is a fundamental domain with respect to the action of $R_{\barX}$, the problem can be reduced to bound the cardinality of the following set
\[
\{
\lambda \in \Lambda_{\barX} \mid
(\lambda, \lambda) = d
\}
/ \Aut (\barX) \ltimes R_{\barX}.
\]
We may fix the lattice isometry class of $\Lambda_{X_{\overline{k}}}$.
For any lattice isometry class $\Lambda$, one can fix a rational polyhedral fundamental domain $D_{\Lambda} \subset \mathcal{C}_{\Lambda}^{+}$ with respect to the action of $\algO^{+}(\Lambda)$. Here, $\mathcal{C}_{\Lambda}^{+}$ is the convex hull of $\overline{\mathcal{C}_{\Lambda}} \cap \Lambda_{\Q}$, and $\mathcal{C}_{\Lambda}$ is the positive cone of $\Lambda$.
Moreover, we put  $\algO^{+} (\Lambda_{\barX}) = \bigcup_{l \in L} \Gamma g_{l}$, where $\Gamma \coloneqq \Ima \rho$.
We note that the cardinality of the index set $L$ is uniformly bounded by Lemma \ref{index}.
Then $\bigcup_{l\in L} g_{l} D_{\Lambda_{\barX}}$ is a fundamental domain with respect to the action of $\Aut (\barX) \ltimes R_{\barX}$.
Therefore, we have the desired uniform boundedness.

\subsection{Proof of Theorem \ref{effective} (2)}
In this subsection, we follow the notation in Theorem \ref{effective} (2).
For simplicity, if some real number admits upper bound which depends only on $[k'\colon k]$, $\disc (\Lambda_{\barX},q)$, and the deformation class of $\barX$, then we say that $c$ is uniformly bounded.

To begin with, we may assume that $\Lambda_{X_{k'}} = \Lambda_{\barX}$.
Indeed, an order of a torsion subgroup of $\GL (\Lambda_{\barX})$ is bounded by $\# \GL(\Lambda_{\barX, \F_{3}})$, since $1 + 3 \mathrm{End}(\Lambda_{\barX, \Z_{3}}) \subset \GL (\Lambda_{\barX, \Z_{3}})$ is a torsion free subgroup. We note that the Picard number of $\barX$ is bounded above by the second Betti number of $\barX$, which depend only on a deformation class of $X$.
Moreover, we shall show that we may assume $\Bir (X_{k'}) = \Bir (\barX)$.
To prove this, we propose the following lemma.

\begin{lemma}\label{effectivelemmamain}
In the statement of Lemma \ref{lemmamainrefined2},
we can take a positive integer $d$ which depends only on the lattice isometry class of $\Lambda_{X_{\overline{k}}}$ and the deformation class of $X_{\C}$ (not on $[k'\colon k]$).
\end{lemma}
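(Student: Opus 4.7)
The plan is to revisit the proof of Lemma~\ref{lemmamainrefined2} and strengthen its uniformity, replacing the appeal to Borel's finiteness (which, as Remark~\ref{remarkgalcoh} records, bounds the number of Galois-module structures in terms of both rank and $\#\Gal(k'/k)$) by the stronger classical fact that the number of conjugacy classes of finite subgroups of $\algO(\Lambda_{\barX})$ is bounded by a constant $C$ depending only on $\rk \Lambda_{\barX}$. This is a consequence of Minkowski's bound on orders of finite subgroups of $\GL_{n}(\Z)$ together with the finiteness of conjugacy classes of finite subgroups in the arithmetic group $\algO(\Lambda_{\barX})$.

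First, for any \hyp variety $Y$ over $k$ with $Y_{k'}$ birational to $X_{k'}$, Remark~\ref{remsmall} shows that the birational map is small and hence induces an isometry $\Lambda_{Y_{\overline{k}}} \simeq \Lambda_{\barX}$ of $\Z$-lattices. Under this identification, the action of $G_{k}$ on $\Lambda_{Y_{\overline{k}}}$ factors through a finite subgroup $H_{Y} \subset \algO(\Lambda_{\barX})$, and $H_{Y}$ is conjugate to one of a fixed list of representatives $H_{1},\ldots,H_{C}$. Composing the isometry with the conjugating element, we may arrange $H_{Y}=H_{j}$ for some $j$; the fixed sublattice $\Lambda^{H_{j}} \coloneqq \Lambda_{\barX}^{H_{j}}$ is then carried isomorphically onto $\Lambda_{Y}$.

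Next, I would run the chamber-cutting construction of Lemma~\ref{lemmamainrefined} once for each conjugacy class $[H_{j}]$ such that $\Lambda^{H_{j}}$ has signature $(1,\ast)$ (the remaining classes support no $Y$). The only input external to the lattice pair $(\Lambda_{\barX}, \Lambda^{H_{j}})$ is the Amerik--Verbitsky bound $N$ on the Beauville--Bogomolov squares of primitive MBM classes, which depends solely on the deformation class of $X_{\C}$; the rest of the construction---the rational polyhedral fundamental domain for $\algO^{+}(\Lambda_{\barX}, \Lambda^{H_{j}})$ acting on the positive cone, its subdivision by walls, and the extraction of rational representatives $L^{(j)}_{i}$ with squares $d^{(j)}_{i}$---is intrinsic to that lattice pair. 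This produces a constant $d_{j}$ with the property that any $Y$ in the given deformation class whose lattice pair is isomorphic to $(\Lambda_{\barX}, \Lambda^{H_{j}})$ admits a polarization of Beauville--Bogomolov square $d_{j}$. Setting $d \coloneqq \mathrm{lcm}_{j}(d_{j})$ over the at most $C$ relevant classes yields a bound depending only on $\Lambda_{\barX}$ and the deformation class.

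The main obstacle I anticipate is verifying that the proof of Lemma~\ref{lemmamainrefined} is genuinely intrinsic to the lattice pair, so that it may be rerun with $\Lambda^{H_{j}}$ in place of $\Lambda_{X}$ even when no $k$-variety realizes $\Lambda^{H_{j}}$ as its Picard lattice. The concluding step of that proof---where $L_{i,Y}\coloneqq\Phi(g^{-1}L_{i})$ is shown to define an ample class on $Y$---must be re-examined in this setting; but it invokes only Proposition~\ref{fundweylrational}, Definition~\ref{defbiramp} and Proposition~\ref{biramp}, which concern the intrinsic cone structure inside $\Lambda_{Y_{\overline{k}}}$, and so transfer directly along the chosen isomorphism of pairs.
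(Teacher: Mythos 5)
Your proposal is correct and follows essentially the same route as the paper: the paper's (very terse) proof likewise observes that the number of candidate Galois-module structures on $\Lambda_{Y_{\overline{k}}}$ depends only on the lattice isometry class of $\Lambda_{\barX}$ (Borel's finiteness of conjugacy classes of finite subgroups of the arithmetic group, independent of $[k'\colon k]$) and that the constant $N$ in Lemma \ref{lemmamainrefined} depends only on the deformation class of $X_{\C}$, then reruns that construction for each of the finitely many lattice pairs. Your additional worry about unrealized classes $\Lambda^{H_{j}}$ is harmless, since one need only run the construction for those classes actually realized by some $Y$, and the resulting $d_{j}$ depends only on the lattice pair and $N$.
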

\begin{proof}
The number of candidates of isomorphism classes of $\Gal(k'/k)$-$\Z$-lattices $\Lambda_{Y_{\overline{k}}}$ in Lemma \ref{lemmamainrefined2} depends only on the lattice isometry class of $\Pic (\barX)$.
Moreover, the integer $N$ appearing in the proof of Lemma \ref{lemmamainrefined} depends only on the deformation class of $X_{\C}$. Therefore, the desired statement follows from the proof of Lemma \ref{lemmamainrefined} and Lemma \ref{lemmamainrefined2}.
\end{proof}
Take a positive integer $d$ in Lemma \ref{effectivelemmamain}.
Then $X$ admits a polarization $L$ of Beauville--Bogomolov square $d$.
As before, let $\rho \colon \Bir (\barX) \ltimes R_{\barX} \rightarrow \algO (\Lambda_{\barX})$ be the natural action.
The action of $\Gal(\overline{k}/k')$ on the right hand side is trivial, and $\ker \rho \subset \Bir (\barX)$ is contained in $\Bir (Y,L)$, whose order is uniformly bounded by Lemma \ref{boundaut}. 
Therefore, we may assume that $\ker \rho$ is $\Gal (\overline{k}/k')$-trivial.
\begin{lemma}\label{effectiveindex}
The index of $\Ima \rho$ in $\algO (\Lambda_{\barX})$ depends only on the lattice isometry class of $\Lambda_{\barX}$ and the deformation class of $X_{\C}$.
\end{lemma}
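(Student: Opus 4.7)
The plan is to mirror the structure of the proof of Lemma \ref{index} for K3 surfaces, but substituting Markman's monodromy-theoretic Torelli theorem for the (weaker) Torelli theorem used in the K3 case. First I would reduce to the case $k = \C$. Since $X$ is defined over a finitely generated subfield $k_0 \subset \overline{k}$, fixing an embedding $k_0 \hookrightarrow \C$ yields identifications $\Bir(\barX) \simeq \Bir(X_\C)$, $R_{\barX} \simeq R_{X_\C}$, and $\Lambda_{\barX} \simeq \Lambda_{X_\C}$ compatible with $\rho$ (as was already used in the proof of Proposition \ref{indexbir}). So it suffices to bound $[\algO(\Lambda_{X_\C}) : \Ima \rho_\C]$ in terms of the discriminant of $(\Lambda_{X_\C}, q)$ and the deformation class.

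Over $\C$, I would introduce Markman's monodromy group $\Mon^{2}(X_\C) \subset \algO(H^{2}(X_\C,\Z), q)$. By Markman's theorem on the monodromy of irreducible symplectic manifolds, $\Mon^{2}(X_\C)$ is a finite index subgroup of $\algO(H^{2}(X_\C,\Z), q)$ whose index depends only on the deformation class of $X_\C$. Markman's Hodge-theoretic Torelli theorem then asserts that any Hodge monodromy operator preserving the birational Kähler cone is induced by a birational self-map of $X_\C$; equivalently, after composing with Weyl-group reflections in $R_{X_\C}$ and sign adjustments, the image of $\Bir(X_\C) \ltimes R_{X_\C}$ in $\algO(\Lambda_{X_\C})$ contains the restriction to $\Lambda_{X_\C}$ of every element of $\Mon^{2}(X_\C)$ that happens to preserve the Hodge decomposition.

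Now I would invoke Nikulin's lattice-extension theorem: any $g \in \algO(\Lambda_{X_\C})$ acting trivially on the discriminant group $\Lambda_{X_\C}^\ast / \Lambda_{X_\C}$ extends to an isometry of the full $H^{2}$ lattice acting as the identity on the transcendental lattice, hence is automatically a Hodge isometry. Combining Nikulin's extension with the characterization of $\Mon^{2}(X_\C)$ inside $\algO(H^{2}(X_\C, \Z), q)$, and with Markman's Torelli theorem, the subgroup
\[
\ker\!\bigl(\algO(\Lambda_{X_\C}) \to \algO(\Lambda_{X_\C}^\ast/\Lambda_{X_\C})\bigr) \,\cap\, \Mon^{2}(X_\C)|_{\Lambda_{X_\C}}
\]
is contained in $\Ima \rho_\C$. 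Its index in $\algO(\Lambda_{X_\C})$ is bounded above by $\#\algO(\Lambda_{X_\C}^\ast/\Lambda_{X_\C}) \cdot [\algO(H^{2}) : \Mon^{2}]$, the first factor depending only on $\disc(\Lambda_{\barX}, q)$ and the second only on the deformation class, giving the desired uniform bound.

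The main obstacle is step three: making the realizability of lattice-theoretic extensions as birational maps precise. Even when Nikulin's theorem produces an extension to $H^{2}(X_\C,\Z)$, one must verify that this extension sits inside the monodromy group (so that Markman's Torelli theorem applies) and that its image in $\algO(\Lambda_{X_\C})$ preserves the positive cone up to sign, so that composition with an element of $R_{X_\C}$ lands in the image of $\Bir(X_\C)$. For the classical deformation types ($K3^{[n]}$, generalized Kummer, $OG_6$, $OG_{10}$) the monodromy group is described explicitly by Markman and Mongardi; the general case requires only that the index $[\algO(H^{2}) : \Mon^{2}]$ be a deformation invariant, which is built into Markman's framework.
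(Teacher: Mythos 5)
Your proposal is correct and follows essentially the same route as the paper: reduce to $\C$, bound the index of the restriction of Hodge isometries of $H^{2}(X_{\C},\Z)$ to $\algO^{+}(\Lambda_{X_{\C}})$ via the Nikulin-type extension criterion (so this factor is controlled by the discriminant group), bound the index of $\Mon^{2}(X_{\C})$ in $\algO^{+}(H^{2}(X_{\C},\Z))$ as a deformation invariant, and use Markman's Hodge-theoretic Torelli theorem together with the reflection group $R_{X_{\C}}$ to realize the resulting isometries by birational maps. The paper compresses all of this into a citation of the proof of Markman's Lemma 6.23 plus the two index bounds, which is exactly the decomposition you spell out.
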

\begin{proof}
We may assume $\overline{k} = \C$.
As in the proof of \cite[Lemma 6.23]{Markman2011}, 
to prove Lemma \ref{effectiveindex}, it suffices to bound the following.
\begin{enumerate}
\item
The index of $\Ima (\algO^{+}_{\mathrm{Hdg}}(H^{2}(X_{\C},\Z)) \rightarrow \algO^{+}(\Lambda_{X_{\C}}))$ in $\algO^{+} (\Lambda_{X_{\C}})$.
\item
The index of $\mathrm{Mon}^{2}(X_{\C})$ in  $\algO^{+}(H^{2} (X_{\C},\Z))$.
\end{enumerate}
Here, $\algO^{+}_{\mathrm{Hdg}}(H^{2}(X_{\C},\Z)) \subset \algO^{+}(H^{2}(X_{\C},\Z))$ is the subgroups of elements which preserve the Hodge structure, and $\mathrm{Mon}^{2}(X_{\C})$ is the monodromy group restricted on the second cohomology (see \cite[Definition 1.1]{Markman2011}).
$(1)$ depends on the lattice isometry class of $\disc \Lambda_{\barX}$ (see \cite[Chapter 14, Proposition 2.6]{Huybrechts2016}).
$(2)$ clearly depends on the deformation class of $X_{\C}$. Therefore, it finishes the proof.
\end{proof}

\begin{lemma}
The number of generators of the image of $\rho$ is uniformly bounded.
In particular, the number of generators of $\Bir (\barX) \ltimes R_{\barX}$ and $\Bir (\barX)$ are uniformly bounded.
\end{lemma}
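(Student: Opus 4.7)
The plan is to imitate the analogous K3 step, using the Schreier index formula together with the uniform bound on the index of $\Ima\rho$ in $\algO(\Lambda_{\barX})$ established in Lemma \ref{effectiveindex}. First I would observe that, since we have fixed the isometry class of $\Lambda_{\barX}$ (there are only finitely many classes of given bounded rank and discriminant, and the rank is controlled by $b_2$ of the deformation class), the group $\algO(\Lambda_{\barX})$ is an arithmetic group whose minimal number of generators can be bounded by a constant depending only on that class. This reduction to a uniform bound for the ambient arithmetic group is where one should be most careful, but it is classical (it follows from the fact that $\algO(\Lambda_{\barX})$ admits a finite presentation, the constants in which depend only on the isometry class).

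Next I apply Schreier's index formula: if a group $G$ is generated by $n$ elements and $H \leq G$ has index $m$, then $H$ is generated by at most $1 + m(n-1)$ elements. By Lemma \ref{effectiveindex}, the index $[\algO(\Lambda_{\barX}) : \Ima\rho]$ is uniformly bounded, so $\Ima\rho$ is generated by a uniformly bounded number of elements. This proves the first statement.

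For the second statement, I use the short exact sequence
\[
1 \rightarrow \ker\rho \rightarrow \Bir(\barX) \ltimes R_{\barX} \rightarrow \Ima\rho \rightarrow 1
\]
coming from Proposition \ref{indexbir}. The kernel $\ker\rho$ acts trivially on $\Lambda_{\barX}$ and is contained in $\Bir(\barX,L)$ for the polarization $L$ of uniformly bounded Beauville--Bogomolov square constructed via Lemma \ref{effectivelemmamain}; thus by Proposition \ref{boundaut} the order $\#\ker\rho$ is uniformly bounded. An extension of a group on $a$ generators by a finite group of order $b$ is generated by at most $a + b$ elements (lift each generator of the quotient arbitrarily and adjoin all elements of the kernel), so $\Bir(\barX)\ltimes R_{\barX}$ has a uniformly bounded number of generators. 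Finally $\Bir(\barX)$ is a quotient of the semidirect product via the projection onto the first factor, so the same bound applies.

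The only non-routine point is knowing that $\algO(\Lambda_{\barX})$ itself has a number of generators depending only on the isometry class of the lattice; once that is granted, the rest is a mechanical Schreier-plus-finite-kernel argument exactly parallel to the K3 case of \cite{Bright2019}.
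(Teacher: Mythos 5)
Your proposal is correct and follows essentially the same route as the paper, which proves the first statement by combining Lemma \ref{effectiveindex} with the Schreier index formula and deduces the second from the uniform bound on $\#\ker\rho$. You have merely made explicit two points the paper leaves implicit, namely that $\algO(\Lambda_{\barX})$ itself has a number of generators depending only on the (fixed) isometry class of the lattice, and the elementary generator count for an extension with finite kernel; both are accurate.
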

\begin{proof}
The first statement follows from Lemma \ref{effectiveindex} and the Schreier index formula.
The second statement follows from the first statement since the order of $\ker \rho$ is uniformly bounded.
\end{proof}
Let $\gamma_{1}, \ldots, \gamma_{N}$ be generators of $\Bir (\barX)$.
For any $\sigma \in \Gal (\overline{k}/k')$, we have $\sigma (\gamma_{i}) = \gamma_{i} \delta_{i, \sigma}$ for $\delta_{i,\sigma}\in \ker \rho$.
Then the map $\sigma \mapsto \delta_{i,\sigma}$ gives a group morphism $\psi_{i} \colon \Gal (\overline{k}/k') \rightarrow \ker \rho$. 
We note that the index of $\bigcap_{i} \ker \psi_{i}$ in $\Gal (\overline{k}/k')$ is uniformly bounded, since so are $N$ and the order of $\ker \rho$. 
Replacing $k'$ by a finite field extension corresponding to $\bigcap_{i} \ker \psi_{i}$, we may assume that $\Bir (\barX) = \Bir (X_{k'}).$

Next, we recall the argument in Theorem \ref{hypkfintwist}.
First, we shall consider the case of $b_{2} (\barX) \geq 5$.
We take a complete system of representatives $M_{1}, \ldots, M_{m} \in \Lambda_{X_{k'}}$ of polarizations on $X_{k'}$ of Beauville--Bogomolov square $d$. Here, $d$ is taken as in Lemma \ref{effectivelemmamain}.
We have an injective morphism of sets $\Psi \colon \Tw_{k'/k}(X) \rightarrow \bigsqcup_{i} T_{i}$, where $T_{i}$ is the set of twists of $(X_{k'},M_{i})$ for $k'/k$.
Therefore, we should bound the following constants.
\begin{enumerate}
\item
The cardinality of each set $T^{i}$.
\item
The positive integers $m$
\end{enumerate}

\subsection*{Bound of (1)}
This follows from Proposition \ref{boundaut}.

\subsection*{Bound of (2)}
In the following, we use the same notation $D_{\Lambda}$ as in the proof of Theorem \ref{effective} (1).
Since a birational automorphism which sends a polarization to a polarization is an automorphism, we need to bound the set of degree $d$ elements $\lambda \in \overline{\mv}(\barX)^{\circ} \cap \Lambda_{\barX}$ modulo birational automorphisms.
Since $\overline{\mv}(\barX)$ is a fundamental domain with respect to the action of $R_{\barX}$, the problem can be reduced to bound the cardinality of the following set.
\[
\{
\lambda \in \Lambda_{\barX} \mid
(\lambda, \lambda) = d
\}
/ \Bir (\barX) \ltimes R_{\barX}.
\]
Let $\rho \colon \Bir (\barX) \ltimes R_{\barX} \rightarrow \algO(\Lambda_{\barX})$ be the natural map and $\Gamma$ the image of $\rho$.
Moreover, we put  $\algO^{+} (\Lambda_{\barX}) = \bigcup_{l \in L} \Gamma g_{l}$.
We note that the cardinality of the index set $L$ is uniformly bounded by Lemma \ref{effectiveindex}.
Then $\bigcup_{l\in L}g_{l} D_{\Lambda_{\barX}}$ is a fundamental domain with respect to the action of $\Bir(\barX) \ltimes R_{\barX}$, and we have the desired uniform boundedness.

Finally, we consider the case where $b_{2} (\barX) \leq 4$.
If the Picard rank of $\barX$ is 1, then we have desired finiteness by Proposition \ref{boundaut}, since we may fix the lattice isometry class of $\Lambda_{\barX}$.
If the Picard rank of $\barX$ is 2, by Remark \ref{remarkgalcoh} and the proof of \cite[Proposition 8.3 (1), (2)]{Oguiso2008}, it is enough to bound $\# \ker \rho$ and the order of torsion part of $\SO (\Lambda_{\barX})$. 
The former is uniformly bounded as in the argument in the beginning of this subsection, and the latter is also uniformly bounded since we may fix the lattice isometry class of $\Lambda_{\barX}$.
Therefore, it finishes the proof.

\section{On the derived equivalent twists}
\label{Derivedequivalent}
In this section, we prove the finiteness of derived equivalent twists, as an application of the finiteness of twists for finite extensions.

\begin{prop}\label{FourierMukai}
Let $k$ be a field.
Let $X,Y$ be smooth projective varieties over $k$.
We denote their bounded derived categories of coherent sheaves by $D_{b}(X), D_{b}(Y)$.
Let p (resp,\, q) be the first projection $X\times_{k}Y \rightarrow X$ (resp.\,the second projection $X\times_{k} Y \rightarrow Y$).
Suppose that $X$ and $Y$ are derived equivalent, i.e.\  there exists 
an equivalence $F \colon D_{b} (X) \simeq D_{b}(Y)$.
Then there exists a perfect complex $P \in D_{b}(X\times_{k} Y)$ unique up to isomorphism, which is called the Fourier--Mukai kernel, such that $F$ is written as
\[
\Phi_{P}\colon D_{b} (X) \rightarrow D_{b}(Y),\ \varepsilon \mapsto Rq_{\ast}(Lp^{\ast} \varepsilon \otimes P).
\].
\end{prop}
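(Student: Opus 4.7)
The plan is to invoke Orlov's representability theorem, together with Galois descent, rather than to reconstruct the whole argument. The proof splits naturally into three parts: existence of a kernel $P_{\overline{k}}$ over the algebraic closure, its uniqueness, and descent of the kernel down to $k$.

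For existence over $\overline{k}$, I would follow Orlov's original strategy. One exploits the fact that $D_{b}(X_{\overline{k}})$ is generated, as a triangulated category, by an ample sequence such as $\{\oo_{X_{\overline{k}}}(n)\}_{n \in \Z}$ for some ample line bundle. The kernel is built inductively by assembling the objects $F(\oo_{X_{\overline{k}}}(n))$ into a perfect complex on $X_{\overline{k}} \times_{\overline{k}} Y_{\overline{k}}$ using a Beilinson-type resolution of the diagonal, and one then checks that $\Phi_{P_{\overline{k}}}$ agrees with $F$ first on the spanning class and then on all of $D_{b}(X_{\overline{k}})$ by a standard five-lemma argument. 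A more conceptual route uses the uniqueness of DG-enhancements (Lunts--Orlov) and the Morita-style representability of functors between perfect DG-categories.

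For uniqueness, if $\Phi_{P} \simeq \Phi_{Q}$ as functors then one recovers $P$ and $Q$ from their action on structure sheaves of closed points $x \in X_{\overline{k}}$ via $L i_{x}^{*} P \simeq L i_{x}^{*} Q$, and a direct $\Hom$ computation on the product furnishes a distinguished isomorphism $P \simeq Q$. The key point to record here is that this isomorphism is canonical, i.e.\ depends functorially on the data.

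For the descent from $\overline{k}$ to $k$, I would use that the equivalence $F$ is defined over $k$ and therefore commutes with the Galois action on objects obtained by base change. Applied to $P_{\overline{k}}$, uniqueness then produces, for each $\sigma \in \Gal(\overline{k}/k)$, a canonical isomorphism $\sigma^{*} P_{\overline{k}} \simeq P_{\overline{k}}$; canonicity guarantees the cocycle condition, and fpqc descent of perfect complexes on $X \times_{k} Y$ makes the datum effective, producing the sought $P \in D_{b}(X \times_{k} Y)$. The main obstacle is precisely making this descent step rigorous, since it depends on the uniqueness in the proposition being functorial rather than merely set-theoretic; for the purposes of the paper, the cleanest option is to cite the refined versions of Orlov's theorem due to Canonaco--Stellari and Ballard, which treat non-closed base fields explicitly.
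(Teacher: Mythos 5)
The paper's own ``proof'' is a one-line citation of Orlov's representability theorem (\cite[Corollary 5.17]{Huybrechts2006}), so your closing remark --- that the cleanest option is to cite the refined versions valid over a non-closed base field (Canonaco--Stellari, Ballard) --- is exactly the intended argument, and if you take that route nothing further is needed: those results give existence and uniqueness of the kernel directly over $k$, with no passage to $\overline{k}$.

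The three-step sketch you actually propose, however, contains two genuine gaps. First, the uniqueness step fails as stated: knowing $Li_{x}^{*}P\simeq Li_{x}^{*}Q$ for every closed point $x$ does not determine a perfect complex up to isomorphism --- already on $\mathbb{P}^{1}$ the complexes $\oo\oplus\oo(-2)[1]$ and $\oo(-1)^{\oplus 2}[1]$ have isomorphic derived fibres at every point but are not isomorphic --- so no ``direct $\Hom$ computation'' can start from this data alone. The uniqueness in Orlov's theorem is instead obtained by rerunning the ample-sequence construction against the given kernel, and the resulting isomorphism is not canonical in the strong sense your descent step requires. Second, the descent step is more delicate than ``fpqc descent of perfect complexes'': the triangulated category $D_{b}(X\times_{k}Y)$ is not a stack, so a cocycle of isomorphisms $\sigma^{*}P_{\overline{k}}\simeq P_{\overline{k}}$ does not formally glue. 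One must first pass to a finite subextension of $\overline{k}/k$ by a limit argument and then use the vanishing $\mathrm{Ext}^{<0}(P_{\overline{k}},P_{\overline{k}})=0$ --- which does hold here, since for the kernel of an equivalence these groups compute negative Hochschild cohomology of $X$ --- to make the descent datum effective, as in Lieblich--Olsson's arguments, or else work with a DG enhancement throughout. Both difficulties evaporate if one simply runs Orlov's argument over $k$ itself (his proof via ample sequences never uses that the base field is algebraically closed), which is why the direct citation is the right move.
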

\begin{proof}
This is Orlov's result (see \cite[Theorem 2.19]{Orlov} or \cite[Corollary 5.17]{Huybrechts2006}). 
\end{proof}

\begin{prop}\label{FMet}
Let $k$, $X$, $Y$, $p$, $q$, $P$ be as in Proposition \ref{FourierMukai}.
We put
\[
v (P) \coloneqq \ch (P)\cdot \sqrt{\td(X \times_{k} Y)},
\]
where $\ch (P)$ is the Chern character of $P$ and $\td(X \times_{k} Y)$ is the Todd class of the tangent bundle $\mathcal{T}_{X\times_{k}Y/k}$.
We put the cohomological Fourier--Mukai transform as
\[
\Phi_{P,\ell}^{\et}\colon H^{\ast}_{\et}(X_{\overline{k}},\Q_{\ell}) \rightarrow H^{\ast}_{\et}(Y_{\overline{k}}, \Q_{\ell}),\ \alpha \mapsto p_{\ast} (v(P) \cup q^{\ast}(\alpha)).
\]
Then $\Phi_{P,\ell}^{\et}$ gives a $\Gal(k_{s}/k)$-equivariant isomorphism of $\Q_{\ell}$-vector space.
Moreover, there exists an integer $N$ which depends on the dimension of $X$, such that for any $\ell > N$, the isomorphism $\Phi_{P,\ell}^{\et}$ induces an isomorphism $H^{\ast}_{\et}(X,\Z_{\ell}) \rightarrow H^{\ast}_{\et}(Y, \Z_{\ell})$.
\end{prop}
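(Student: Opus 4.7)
The plan is to prove the three assertions of Proposition \ref{FMet}---Galois equivariance, the isomorphism property, and integrality for large $\ell$---in sequence. For Galois equivariance, since $P$, $X$, $Y$, $p$, and $q$ are all defined over $k$, the Chern character $\ch(P)$ and the Todd class $\td(X \times_{k} Y)$ are Galois-invariant classes in $\Q_{\ell}$-étale cohomology, and hence so is the Mukai vector $v(P)$. The three operations $Lq^{\ast}$, cup product with $v(P)$, and $Rp_{\ast}$ commute with base change along $\overline{k}/k$, and hence with the Galois action, so $\Phi_{P,\ell}^{\et}$ is manifestly $\Gal(k_{s}/k)$-equivariant.

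For the isomorphism property, one can base change to $\overline{k}$ and invoke the classical fact that the quasi-inverse of $\Phi_{P}$ is itself a Fourier--Mukai transform with some kernel $Q$ (for instance $Q = P^{\vee} \otimes q^{\ast}\omega_{Y}[\dim Y]$); Grothendieck--Riemann--Roch then implies that the cohomological Mukai-vector transform is compatible with convolution of kernels, so $\Phi_{Q,\ell}^{\et}\circ \Phi_{P,\ell}^{\et}=\textup{id}$ on $\ell$-adic cohomology, and similarly in the opposite order. This is the standard argument from Huybrechts' textbook adapted to $\ell$-adic cohomology in place of Betti cohomology, which goes through unchanged since the relevant Grothendieck--Riemann--Roch formula is available for smooth proper morphisms of varieties in $\ell$-adic cohomology.

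For the integral refinement, the key is to track the denominators in $v(P)$. The Chern character of a perfect complex has rational coefficients whose denominators in degree $k$ divide $k!$, while the Todd class and its formal square root have Bernoulli-type denominators bounded---by von Staudt--Clausen---by primes not exceeding the degree plus one. Since derived equivalent smooth projective varieties have the same dimension, the relevant classes live in degree $\le 4\dim X$, so there is an explicit $N$ depending only on $\dim X$ such that for $\ell > N$ every denominator appearing in $v(P)$ is invertible in $\Z_{\ell}$. Once $v(P)$ is $\Z_{\ell}$-integral, $\Phi_{P,\ell}^{\et}$ automatically sends $H^{\ast}_{\et}(X_{\overline{k}}, \Z_{\ell})$ into $H^{\ast}_{\et}(Y_{\overline{k}}, \Z_{\ell})$, since pullback, cup product with an integral class, and the proper pushforward (Gysin) map preserve integral étale cohomology for smooth proper morphisms; applying the same bound to the inverse kernel $Q$ (and enlarging $N$ if needed) yields an integral isomorphism.

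The main obstacle I anticipate is making the bound $N$ depend only on $\dim X$: one must (i) confirm $\dim Y = \dim X$ for derived equivalent smooth projective varieties, and (ii) verify that the square root $\sqrt{\td(X \times_{k} Y)}$ introduces no worse denominators than $\td(X \times_{k} Y)$ itself, which ultimately reduces to an elementary analysis of the power series $\bigl(z/(1-e^{-z})\bigr)^{1/2}$.
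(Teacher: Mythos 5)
Your proposal is correct and follows essentially the same route as the paper, which simply cites \cite[Remark 5.30]{Huybrechts2006} and \cite[Section 2]{Lieblich2015} for the equivariant isomorphism (i.e., the standard adjoint-kernel/convolution argument you spell out, transported to $\ell$-adic cohomology) and observes that the integrality for $\ell>N$ ``follows from the definition of the Chern character and the Todd character'' --- precisely your denominator count for $\ch$, $\td$, and the formal square root. Your write-up is in fact more detailed than the paper's own proof on both points.
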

\begin{proof}
For the first statement, see \cite[Remark 5.30]{Huybrechts2006}, \cite[Section 2]{Lieblich2015}.
The second statement follows from the definition of the Chern character and the Todd class.
\end{proof}
\begin{remark}
As in \cite[Lemma 10.6]{Huybrechts2006}, in the case of K3 surfaces, the class $v(P)$ has an integral coefficient.
\end{remark}

\begin{definition}\label{level}
Let $k$ be a field of characteristic $0$, $X$ an \hyp variety over $k$, $\ell$ a prime number, and $n$ a positive number.
We denote the torsion-free part of $\bigoplus_{i} H^{i}_{\et}(X_{\overline{k}},\Z_{\ell})$ by $H^{\ast}_{\ell}(X_{\overline{k}})$.
Let $\la$ be the abstract $\Z_{\ell}$-lattice which is isomorphic to
$H^{\ast}_{\ell}(X_{\overline{k}})$
.
We define a \emph{ level $\ell^{n}$ structure of the full cohomology of 
$X$} 
as 
a $G_{k}$-invariant $\GL (\la, \ell^n)$-orbit $\alpha$ of an isomorphism over $\Z_{\ell}$ 
\[
\la \simeq H^{\ast}_{\ell}(X_{\overline{k}}).
\]
Here, we put $\GL (\la, \ell^n)$ as a principal level $\ell^{n}$-subgroup 
\[
\ker (\GL (\la) \rightarrow \GL (\la \otimes_{\Z_{\ell}} \Z_{\ell}/\ell^{n}\Z_{\ell})).
\]
We note that for any quasi-polarization $M$ of $X$, any automorphism of $(X,M,\alpha)$ acts trivially on $ H^{\ast}_{\ell}(X_{\overline{k}})$ if $\ell^{n} \geq3$, since $\Aut (X,M)$ is finite (see Proposition \ref{boundaut}) and $\GL(\la, \ell^{n})$ have no non-trivial torsion elements.
\end{definition}

\begin{theorem}\label{findertwist}
Let $k$ be a field.
Let $X$ be a smooth projective variety over $k$.
We put 
\[
\Tw^{D}(X) :=
\left\{ Y \colon\textup{variety over } k \left|
\begin{array}l
\barX \simeq_{\overline{k}} Y_{\overline{k}}, \\
X \textup{ is derived equivalent to } Y
\end{array}
\right.
\right\}/k\textup{-isom}.
\]
Then $\Tw^{D}(X)$ is finite in the following cases.
\begin{enumerate}
\item
$X$ is a K3 surface over $k$, and the characteristic of $k$ is not equal to $2$.
\item
$X$ is a K3 surface over $k$, and $X$ is not supersingular.
\item
$k$ is of characteristic $0$, and $X$ is \hyp variety over $k$ with the second Betti number $b_{2}(X_{\overline{k}}) \geq 5$ such that
\[
\Aut(X_{\overline{k}}) \rightarrow \Aut (H_{\et}^{\ast}(X_{\overline{k}},\Q_{\ell}))
\]
is injective for any prime number $\ell$.
\end{enumerate}
In particular, in the case of (1), the isomorphism classes of K3 surfaces over $k$ which are derived equivalent to $X$ are finitely many.
\end{theorem}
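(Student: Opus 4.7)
The plan is to reduce the finiteness of $\Tw^D(X)$ to the finiteness theorems \ref{fintwistK3} and \ref{hypkfintwist} via a level-structure rigidification built from the Fourier--Mukai kernel. Fix a prime $\ell$ coprime to $\chara k$, large enough that Proposition \ref{FMet} yields integral Galois-equivariant cohomological FM transforms, and fix $n$ with $\ell^n \geq 3$. Since the Galois action on $H^{*}_{\et}(X_{\overline{k}},\Z_\ell)/\ell^n$ has finite image, there is a finite extension $k'/k$ of bounded degree over which this action becomes trivial, and then $X_{k'}$ carries a Galois-invariant level $\ell^n$ structure $\alpha$ in the sense of Definition \ref{level} (or its $\ell$-adic analogue for K3 surfaces in positive characteristic). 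The fibers of the base-change map $\Tw^D(X) \to \Tw^D(X_{k'})$ are naturally in bijection with $\Tw_{k'/k}(Y_0)$ for suitable representatives $Y_0$, hence finite by Theorems \ref{fintwistK3} and \ref{hypkfintwist}; it therefore suffices to prove that $\Tw^D(X_{k'})$ is finite.

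For $Y \in \Tw^D(X_{k'})$, fix a Fourier--Mukai kernel $P_Y$ on $X_{k'} \times_{k'} Y$ (Proposition \ref{FourierMukai}) and set $\Phi_Y := \Phi_{P_Y,\ell}^{\et}$. Galois equivariance of $\Phi_Y$ makes $\beta_Y := \Phi_Y(\alpha)$ a Galois-invariant level $\ell^n$ structure on $Y$. By Lemma \ref{weylK3} in cases (1),(2), or Lemma \ref{lemmamainrefined2} in case (3), $Y$ carries a polarization $L_Y$ of bounded degree or Beauville--Bogomolov square $d$ depending only on $X$. The triple $(Y, L_Y, \beta_Y)$ has only finitely many $\overline{k}$-isomorphism classes: $Y_{\overline{k}} \simeq X_{\overline{k}}$ is fixed, polarizations of bounded degree modulo $\Aut(X_{\overline{k}})$ are finite by Lemma \ref{finquasipol} or Lemma \ref{hypfinquasipol}, and level $\ell^n$ structures on the fixed lattice $\la$ form a finite set.

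The rigidifying observation is that $\Aut(Y_{\overline{k}}, L_Y, \beta_Y)$ is trivial. Such an automorphism has finite order since $\Aut(Y_{\overline{k}}, L_Y)$ is finite (Proposition \ref{boundaut} together with classical results for polarized K3 surfaces); preservation of $\beta_Y$ forces its action on $H^{*}_{\et}(Y_{\overline{k}},\Z_\ell)$ to lie in the principal level-$\ell^n$ subgroup $\GL(\la, \ell^n)$, which is torsion-free for $\ell^n \geq 3$ (as used in Remark \ref{boundautK3}); consequently the cohomological action is trivial, and so is the automorphism itself by the injectivity $\Aut(Y_{\overline{k}}) \hookrightarrow \Aut(H^{*}_{\et}(Y_{\overline{k}},\Q_\ell))$ (Remark \ref{boundautK3} for K3 surfaces, the standing hypothesis in case (3)). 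Triviality of this automorphism group yields, by Galois descent, at most one $k'$-form for each $\overline{k}$-isomorphism class of triple, so there are only finitely many $k'$-isomorphism classes of triples and hence of $Y \in \Tw^D(X_{k'})$. The main technical obstacle is adapting the level-structure formalism of Definition \ref{level} — stated in characteristic $0$ for \hyp varieties — to K3 surfaces in positive characteristic, which is handled by working with $\ell$-adic cohomology at $\ell \neq \chara k$ together with the integrality statement of Proposition \ref{FMet} for $\ell$ sufficiently large.
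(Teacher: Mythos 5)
Your proposal is correct and follows essentially the same route as the paper: transport a level $\ell$-structure through the Galois-equivariant cohomological Fourier--Mukai transform of Proposition \ref{FMet}, combine it with a polarization of bounded degree (Lemma \ref{weylK3} resp.\ Lemma \ref{lemmamainrefined2}) and the finiteness of polarizations modulo automorphisms, use torsion-freeness of the principal level subgroup plus injectivity of $\Aut$ on cohomology to kill automorphisms of the rigidified triples, and invoke Theorems \ref{fintwistK3} and \ref{hypkfintwist} for the finiteness of the fibers over $k'$. The only cosmetic difference is that you factor through $\Tw^{D}(X_{k'})$ while the paper maps $\Tw^{D}(X)$ directly into a disjoint union of sets of triples $(Y,M,\alpha)$ over $k'$, but the content is identical.
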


\begin{proof}
The last statement for the case (1) follows from the finiteness of $\Tw^{D}$ and the corresponding statement over algebraically closed field given by Bridgeland--Maciocia and Lieblich--Olsson (\cite[Corollary 1.2]{Bridgeland2001} and \cite[Theorem 1.1]{Lieblich2015}). 

In the following, we fix a prime number $\ell > \max\{N,2\}$, where $N$ is given by applying \ref{FMet} to $X$.

First, we prove the assertions (1) and (2).
Since the isomorphism functor is unramified (see the proof of Theorem \ref{fintwistK3}), we have
\[ 
\Tw^{D}(X) 
=
\left\{ Y \colon \textup{variety over } k \left|
\begin{array}l
X_{k_{s}} \simeq_{k_{s}} Y_{k_{s}}, \\
X \textup{ is derived equivalent to } Y
\end{array}
\right.
\right\}
\]
for any K3 surfaces $X$ over $k$. Here, we denote the separable closure of $k$ by $k_{s}$.
By Lemma \ref{weylK3}, there exists a positive integer $d$ such that every $Y \in \Tw^{D}(X)$ admit a polarization $M_{Y}$ of degree $d$.
Moreover, for any $Y_{1}, Y_{2} \in \Tw^{D} (X)$, we have a Galois equivariant isomorphism 
\[
\Phi_{P,\ell}^{\et} \colon H^{\ast}_{\et} (Y_{1,\overline{k}},\Z_{\ell}) \simeq H^{\ast}_{\et} (Y_{2,\overline{k}},\Z_{\ell})
\]
by the choice of $\ell$.
Therefore, we can take a finite Galois extension $k'/k$ such that $Y_{k'}$ admits a level $\ell$-structure $\alpha_{Y_{k'}}$ on the full cohomology for any $Y \in \Tw^{D}(X)$.
As in the proof of Theorem \ref{fintwistK3}, 
by Lemma \ref{finquasipol}, we can take a complete system of representatives $M_{1}, \ldots, M_{m} \in \Pic_{X/k}(k_{s})$ of polarizations of $X_{k_{s}}$ of degree $d$ modulo $\Aut _{X/k}(k_{s})$. 
Moreover, we denote possible level structures on the full cohomology of $(X_{k_{s}})$ by $(\alpha_{1},\ldots, \alpha_{M})$.
We put
\[
T_{i,j}
\coloneqq
\left\{
(Y,M,\alpha) \left|
\begin{array}l
Y \colon \textup{K3 surface over $k'$, }\\
M \colon \textup{polarization of $X$, }\\
\alpha \colon \textup{level } \ell\textup{-structure on the full cohomology of }Y, \\
(Y_{\overline{k}},M_{k_{s}},\alpha_{k_{s}}) \simeq_{\overline{k}} (X_{k_{s}},M_{i},\alpha_{j})
\end{array}
\right.
\right\}.
\]
Now we have a map of sets 
\[
\Tw^{D} (X) \rightarrow
\bigsqcup_{i,j} T_{i,j},\ Y \mapsto (Y_{k'},M_{Y,k'}, \alpha_{Y_{k'}}).
\]
This map has finite fiber by Theorem \ref{fintwistK3} in the cases $(1)$ and  $(2)$.
On the other hand, the automorphism group of a polarized K3 surface with a level $\ell$-structure on the full cohomology is trivial since the map $\Aut (\barX) \rightarrow H^{2}_{\et}(\barX, \Z_{\ell})$ is injective as in Remark \ref{boundautK3}  (see also Definition \ref{level}).
Therefore, each set $T_{i,j}$ is a singleton. Now we have the desired finiteness.

Next, we shall prove the assertion (3).
By Lemma \ref{lemmamainrefined2}, there exists a positive integer $d$ such that for any $Y \in \Tw^{D}(X)$, $Y$ admits a polarization $M_{Y}$ of Beauville--Bogomolov square $d$.
As before, we can take a finite Galois extension $k'/k$ such that $Y_{k'}$ admits a level $\ell$-structure $\alpha_{Y_{k'}}$ on the full cohomology for any $Y \in \Tw^{D}(X)$.

As in the proof of Theorem \ref{hypkfintwist}, 
by Lemma \ref{hypfinquasipol}, we can take a complete system of representatives $M_{1},\ldots, M_{m} \in \Lambda_{X_{\overline{k}}}$ of polarizations on $X_{\overline{k}}$ of Beauville--Bogomolov square $d$.
Moreover, we denote possible level $\ell$-structures on the full cohomology of $Y_{k'}$ by $\alpha_{1},\ldots, \alpha_{M}$.
We put
\[
T_{i,j}
\coloneqq
\left\{
(Y,M,\alpha) \left|
\begin{array}l
Y \colon \textup{\hyp variety over $k'$, }\\
M \colon \textup{polarization on $Y$, }\\
\alpha \colon \textup{level } \ell\textup{-structure on the full cohomology of }(Y,M), \\
(Y_{\overline{k}},M_{\overline{k}},\alpha_{\overline{k}}) \simeq_{\overline{k}} (X_{\overline{k}},M_{i},\alpha_{j})
\end{array}
\right.
\right\}.
\]
Now we have a map of sets
\[
\Tw^{D}(X) \rightarrow  \bigsqcup_{i,j} T_{i,j},\ Y \mapsto (Y_{k'}, M_{Y_{k'}}, \alpha_{Y_{k'}}).
\]
This map has finite fiber by Theorem \ref{hypkfintwist}.
On the other hand, 
by the assumption,
the automorphism group of a polarized \hyp variety with a level structure on the full cohomology is trivial (see the remark in Definition \ref{level}). 
Therefore, each set $T_{i,j}$ is a singleton, and we have the desired finiteness.
\end{proof}

\begin{corollary}
\label{corfindertwist}
Let $k$ be a field of characteristic $0$. and $X$ is \hyp variety over $k$.
Suppose that there exist a subfield $K\subset k$, an embedding $K \hookrightarrow \C$, and an \hyp variety $X'$ over $K$ with $X'_{k} \simeq X$ such that $X_{\C}$ is deformation equivalent to one of the following.
\begin{enumerate}
\item
The $n$-points Hilbert scheme $S^{[n]}$ of a K3 surface $S$ over $\C$ (in this case, we say that $X$ is of $K3^{[n]}$-type).
\item
The fiber of the summation morphism $A^{[n+1]} \rightarrow A$ over $0 \in A$, where $A^{[n+1]}$ is a $n+1$-points Hilbert scheme of an abelian surface over $\C$ (in this case, we say that $X$ is if generalized Kummer-type).
\item
O Grady's 6-dimensional varieties over $\C$ (see \cite{OGrady2003}) (in this case, we say that $X$ is $OG_{6}$-type).
\item
O Grady's 10-dimensional varieties over $\C$ (see \cite{OGrady1999}) (in this case, we say that $X$ is $OG_{10}$-type).
\end{enumerate}
Then $Tw^{D}(X)$ is a finite set.
In particular, in the case of (1), the isomorphism classes of $K3^{[n]}$-type varieties over $k$ which are derived equivalent to $X$ are finitely many.
\end{corollary}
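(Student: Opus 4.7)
The plan is to deduce the corollary from Theorem \ref{findertwist}(3), whose hypotheses are that $b_{2}(X_{\overline{k}}) \geq 5$ and that the natural representation
\[
\rho_{\ell} \colon \Aut(X_{\overline{k}}) \longrightarrow \Aut(H^{*}_{\et}(X_{\overline{k}}, \Q_{\ell}))
\]
is injective for every prime $\ell$. Once these are in hand, finiteness of $\Tw^{D}(X)$ is immediate. The ``in particular'' statement in case (1) will then follow by combining this with a finiteness result for Fourier--Mukai partners of $K3^{[n]}$-type HK varieties over $\overline{k}$.

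The Betti number check is routine: smooth proper base change for $\ell$-adic cohomology combined with Artin's comparison theorem, applied to the given model $X'/K$ and the embedding $K \hookrightarrow \C$, gives
\[
b_{2}(X_{\overline{k}}) \;=\; b_{2}(X'_{\overline{K}}) \;=\; b_{2}(X'_{\C}),
\]
and the right-hand side is a deformation invariant, equal to $23$, $7$ and $24$ in cases (1), (2) and (3) respectively.

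The main input is faithfulness of $\rho_{\ell}$. Since $H^{2}_{\et}(X_{\overline{k}}, \Q_{\ell})$ is a direct summand of $H^{*}_{\et}(X_{\overline{k}}, \Q_{\ell})$ as an $\Aut(X_{\overline{k}})$-module, it suffices to prove faithfulness of the action on $H^{2}_{\et}$. For this I would use a Lefschetz-principle argument: extend the embedding $K \hookrightarrow \C$ to $\overline{K} \hookrightarrow \C$, and exploit the fact that HK varieties have no infinitesimal automorphisms, so that the automorphism group scheme $\Aut_{X'/\overline{K}}$ is \'etale. In characteristic $0$ this makes it a constant group scheme, yielding canonical identifications
\[
\Aut(X_{\overline{k}}) \;=\; \Aut(X'_{\overline{K}}) \;=\; \Aut(X'_{\C})
\]
compatible with the actions on the corresponding $H^{2}_{\et}$ via smooth proper base change and the comparison with $H^{2}(X'_{\C}, \Q) \otimes \Q_{\ell}$. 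Faithfulness on $H^{2}(X'_{\C}, \Q)$ in each of the three deformation classes is a known theorem: Beauville for $K3^{[n]}$-type, Boissi\`ere--Nieper-Wi\ss{}kirchen--Sarti for generalized Kummer-type, and Mongardi(--Wandel) for $OG_{10}$-type. The main obstacle is mostly bookkeeping, namely verifying that the three base changes (algebraic to complex points of $\Aut$, \'etale to Betti cohomology, and the Aut-equivariance of both) are compatible; modulo this, Theorem \ref{findertwist}(3) gives finiteness of $\Tw^{D}(X)$.

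For the ``in particular'' statement, I would combine the finiteness of $\Tw^{D}(X)$ with the following two facts, available for $K3^{[n]}$-type HK varieties over algebraically closed fields of characteristic $0$: a derived partner of a $K3^{[n]}$-type variety is again of $K3^{[n]}$-type (after Ploog), and the set of $\overline{k}$-isomorphism classes of such partners of $X_{\overline{k}}$ is finite. Partitioning the $K3^{[n]}$-type varieties over $k$ derived equivalent to $X$ according to their geometric isomorphism class, each cell embeds into $\Tw^{D}$ of a representative and so is finite by the main assertion, whence the total set is finite.
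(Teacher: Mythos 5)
Your overall strategy (verify $b_{2}\geq 5$, verify injectivity of $\Aut(X_{\overline{k}})\to\Aut(H^{*}_{\et}(X_{\overline{k}},\Q_{\ell}))$ by transporting the question to $\C$ via the given model and the absence of infinitesimal automorphisms, then invoke Theorem \ref{findertwist}(3), and finally get the ``in particular'' clause by partitioning according to geometric isomorphism class and using finiteness of Fourier--Mukai partners over $\C$) is the same as the paper's. But there is a genuine error in how you verify the injectivity hypothesis: you reduce it to faithfulness of the action on $H^{2}$ alone, and this premise is \emph{false} in case (2). For a generalized Kummer variety $K_{n}(A)$ the kernel of $\Aut\to\algO(H^{2})$ is nontrivial --- it contains the translations by the $(n+1)$-torsion points of $A$ together with $-1$ --- and the theorem of Boissi\`ere--Nieper-Wi\ss{}kirchen--Sarti that you cite as proving faithfulness is precisely the computation of this nontrivial kernel. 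The paper itself flags this in Remark \ref{remfindertwist}, citing \cite[Theorem 1.2]{Oguiso2020} as a counterexample to injectivity on $H^{2}$ for generalized Kummer type. (Your reduction ``faithful on a summand implies faithful on the whole'' is logically sound; it is the input that fails.)

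What Theorem \ref{findertwist}(3) actually requires, and what the paper uses, is injectivity on the \emph{full} cohomology $H^{*}_{\et}$. For generalized Kummer type this is Oguiso's theorem that there are no cohomologically trivial nontrivial automorphisms (\cite[Theorem 1.3, Theorem 5.1]{Oguiso2020}): the torsion translations act trivially on $H^{2}$ but nontrivially on higher cohomology. For $K3^{[n]}$-type and $OG_{10}$-type your route does work, since there $\Aut\to\algO(H^{2})$ is already injective (for $OG_{10}$ this is \cite[Theorem 3.1]{Mongardi2017}). So the fix is simply to replace your $H^{2}$-faithfulness claim in case (2) by Oguiso's full-cohomology statement; the rest of your argument, including the bookkeeping of the comparison isomorphisms and the treatment of the ``in particular'' statement, goes through as written.
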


\begin{proof}
The cases (1), (2), (4) follow from Theorem \ref{findertwist}, the argument in the proof of \cite[Theorem 1.3]{Oguiso2020}, \cite[Theorem 5.1]{Oguiso2020}, and \cite[Theorem 3.1]{Mongardi2017}.
For (3) follows from Theorem \ref{findertwist}, \cite[Remark 6.9]{Mongardi2017}, and a deformation argument (see \cite[Example 2.10 (3)]{Fu2022}).
The last statement follows from the Lefschetz principle and \cite[Theorem 9.4]{Beckmann2021}.
\end{proof}

\begin{remark}
\label{remfindertwist}
For a K3 surface $X$ over $k$, the map $\Aut (X) \rightarrow \GL(H^{2}_{\et}(X,\Z_{\ell}))$ is injective.
However, for general \hyp varieties over characteristic $0$ fields, this map is no longer injective (see \cite[Theorem 1.2]{Oguiso2020} and \cite[Theorem 5.2]{Mongardi2017}).
On the other hand,
as in Corollary \ref{corfindertwist}, the condition in Theorem \ref{findertwist}.(3) holds true for any known irreducible symplectic varieties.
\end{remark}

\newcommand{\etalchar}[1]{$^{#1}$}
\providecommand{\bysame}{\leavevmode\hbox to3em{\hrulefill}\thinspace}
\providecommand{\MR}{\relax\ifhmode\unskip\space\fi MR }
\providecommand{\MRhref}[2]{%
  \href{http://www.ams.org/mathscinet-getitem?mr=#1}{#2}
}
\providecommand{\href}[2]{#2}

\end{document}